\documentclass[runningheads]{llncs}

\usepackage{graphicx} 
\usepackage{amsmath,amscd}
\usepackage{amsfonts}
\usepackage{amssymb}
\usepackage[lined,boxed,commentsnumbered,ruled]{algorithm2e}
\usepackage{geometry}
\usepackage{comment}
\usepackage{epstopdf}
\usepackage{multirow}
\usepackage{etoolbox}

\usepackage[normalem]{ulem}

\usepackage{xcolor}

\usepackage[colorlinks=true,linkcolor=blue,citecolor=blue,urlcolor=blue]{hyperref}
\AtEndEnvironment{proof}{\qed}

\title{Identifiability and Error Bounds: Metric and Geometric Perspectives}
\author{Hanju Wu$^{1}$ \and Yue Xie$^{1,2}$}
\authorrunning{Hanju Wu and Yue Xie}
\institute{
$^{1}$Department of Mathematics, The University of Hong Kong, Pokfulam, Hong Kong\\
$^{2}$Institute of Data Science, The University of Hong Kong, Pokfulam, Hong Kong
}
\date{\today}
        
\begin{document}

\maketitle

\begin{abstract}
Identifiability and partial smoothness are important notions in optimization, linking differential geometry with variational analysis and providing a foundation for classical active-set methods, sensitivity analysis, and optimality conditions.
These notions capture the local structure of nonsmooth optimization problems and often reduce their local analysis to that of a smooth restriction on an identifiable manifold.
Motivated by this reduction, we study the error-bound property (EB) in the ambient space $\mathbb{R}^n$ and on an identifiable manifold $\mathcal{M}$.
Using a slope-based formulation, we prove that local EB on $(\mathbb{R}^n,d)$ is equivalent to local EB on $(\mathcal{M},d)$ under identifiability.
We further establish this equivalence under $C^1$ partial smoothness and the nondegeneracy condition.
A key ingredient is a novel linear-growth result, which shows that $C^2$ regularity of $\mathcal{M}$ is not required.
In addition, we provide a complementary geometric analysis based on $\mathcal{VU}$-theory.
As an application, we recover the EB equivalence for $\ell_1$-regularized optimization previously established in the literature.
\end{abstract}

\keywords{Identifiability \and Error Bound \and Partial Smoothness \and Slope \and $\mathcal{VU}$-theory}

\section{Introduction}

Active sets are fundamental in optimization, underpinning optimality (regularity) conditions, sensitivity analysis, and algorithm design.
This perspective is closely related to \textit{partial smoothness} \cite{lewis2002active,drusvyatskiy2012optimalityidentifiabilitysensitivity}, \textit{$\mathcal{VU}$-theory} \cite{lemarechal2000u,miller2005newton}, and \textit{identifiable manifolds/surfaces} \cite{wright1993identifiable,hare2004identifying,hare2007identifying}, and has been widely used for decades.
An active set captures local structure around a critical point in a way that naturally extends from classical nonlinear constraints to nonsmooth optimization.
This structure is often low-dimensional and appears in many settings, including active constraints, sparsity, and rank.
For example, in constrained optimization, the active set consists of the constraints active at the solution; in sparse optimization, it corresponds to the support (the indices of nonzero coefficients); and in semidefinite programming, it corresponds to matrices with the same rank as an optimal solution.

In essence, the idea of active sets and identifiability is simple: if an algorithm generates a sequence that approaches a critical point $\bar x$, has function values approaching $f(\bar x)$, and is asymptotically critical, then it is eventually confined to an identifiable set $\mathcal{M}$ containing $\bar x$.
In the language of subgradients, identifiability means that for every sequence with $y_k\in\partial f(x_k)$ and
\[
    (x_k,f(x_k),y_k) \rightarrow (\bar x,f(\bar x),0),
\]
one has $x_k\in\mathcal{M}$ eventually.
In practice, the identifiable set $\mathcal{M}$ is often a manifold, and the restriction $f|_{\mathcal{M}}$ is smooth.
Hence, a high-dimensional nonsmooth optimization problem $\min f(x)$ locally reduces to a low-dimensional smooth problem on $\mathcal{M}$, which is computationally advantageous.

Because identifiability reduces a nonsmooth problem to a smooth one, it is natural to ask whether optimality (regularity) properties are preserved under this reduction.
Some results in this direction have been established in the literature; under reasonable assumptions, for example, $\bar x$ is a (strict) local minimizer of $f$ if and only if $\bar x$ is a (strict) local minimizer of $f|_{\mathcal{M}}$; moreover, $f$ grows quadratically around $\bar x$ if and only if $f|_{\mathcal{M}}$ does \cite{lewis2024identifiability,lewis2002active,lewis2013partial,drusvyatskiy2012optimalityidentifiabilitysensitivity}.
The relationship between the Kurdyka--\L ojasiewicz (KL) property on a complete metric space and its restriction to $\mathcal M$ was studied in \cite{lewis2024identifiability}.
However, the claimed equivalence does not hold on general complete metric spaces, as shown by the counterexample in Appendix~\ref{sec:counterexample}.
This type of transfer result is important because it lets us study convergence through $f|_{\mathcal{M}}$, which is often simpler than studying $f$ directly in the ambient space.

Another central regularity condition for which such a transfer is especially relevant is the error-bound property (EB).
The EB-type property was introduced in \cite{luo1993error} and has been used to establish linear convergence of first-order methods \cite{luo1993error,tseng2010approximation,tseng2009coordinate} and superlinear convergence of second-order methods \cite{hu2025analysis,deng2025efficient,yue2019family}.
EB is known to hold for several classes of structured convex problems \cite{zhou2017unified}, and its relationship with other regularity conditions has been extensively studied \cite{drusvyatskiy2018error,liao2024error,rebjock2025fast,drusvyatskiy2021nonsmooth}.
To connect EB with identifiability, \cite{hu2025analysis} established superlinear convergence of a projected semismooth Newton method under EB on $\mathcal{M}$. However, the general relationship between EB in the ambient space and EB restricted to $\mathcal{M}$ remains unclear.
For $\ell_1$-regularized problems, this equivalence was established in \cite{wu2025resolution} and used to analyze a two-metric adaptive projection method.

To address this gap, we prove that EB in the ambient space is equivalent to EB restricted to an identifiable manifold $\mathcal{M}$ in Euclidean space $\mathbb{R}^n$.
We establish this equivalence from two complementary viewpoints.
\begin{itemize}
    \item From the \textit{metric} perspective, we use the notion of \textit{slope} $|\nabla f|$ to characterize EB. Inspired by the linear-growth in Lemma~\ref{lem:linear-growth} and \cite[Theorem 6.1]{lewis2002active}, we prove a novel linear-growth under $C^1$ partial smoothness and nondegeneracy (Theorem~\ref{thm:linear-growth}), which does not require $C^2$ smoothness on $\mathcal{M}$.
    Building on \cite{lewis2024identifiability}, we show that the equivalence follows essentially from the sharpness of $\mathcal{M}$ (Lemma~\ref{lem:sharpness-under-C1-partly-smoothness}) and the linear-growth property.
    \item From the \textit{geometric} perspective, we formulate EB through the subdifferential. Using $\mathcal{VU}$-theory, we show that the $\mathcal U$-gradient coincides locally with the
    Riemannian gradient on $\mathcal M$. This yields a complementary proof of the equivalence between the ambient and manifold error bounds.
\end{itemize}
As an application, we recover the error-bound equivalence for $\ell_1$-regularized optimization established in \cite{wu2025resolution}.

The rest of this paper is organized as follows. Section~\ref{sec:preliminary} recalls the necessary preliminaries on manifolds, slopes and variational analysis, partial smoothness and $\mathcal{VU}$ geometry. Section~\ref{sec:slope-eb} develops the metric approach and establishes the EB equivalence. Section~\ref{sec:subdiff-eb} presents the geometric counterpart in the convex setting. 
In Section~\ref{sec:discussion-application}, we specialize the theory to $\ell_1$-regularized optimization and discuss the assumptions underlying our main results. 
Section~\ref{sec:conclusion} concludes the paper and outlines directions for future work.
We provide a counterexample to show that the KL equivalence fails to hold in general complete metric spaces in Appendix~\ref{sec:counterexample}.
In Appendix~\ref{proofs}, we provide the proofs of several technical results.

\paragraph{Notation}
Throughout the paper, $\|\cdot\|$ and $\langle\cdot,\cdot\rangle$ denote the Euclidean norm and inner product on $\mathbb{R}^n$.
The symbol $d$ always denotes the Euclidean distance, namely $d(x,y)=\|x-y\|$; on a subset such as $\mathcal{M}$, the same symbol denotes the restricted Euclidean distance.
For a set $C\subset\mathbb{R}^n$, we write $\operatorname{dist}(x,C)=\inf_{y\in C}\|x-y\|$ and use $P_C(x)$ for the Euclidean projection whenever it is well defined and single-valued.
The Euclidean ball centered at $x$ with radius $\epsilon$ is denoted by $B_\epsilon(x)$.
For an extended-real-valued function $f$, we write $\operatorname{dom}f = \{x \in \mathbb{R}^n : f(x) < +\infty\}$.
The sublevel set of $f$ at level $\alpha$ is denoted by $[f\le \alpha]=\{x \in \mathbb{R}^n : f(x)\le \alpha\}$.

\section{Preliminaries}

\label{sec:preliminary}

\subsection{Manifolds}
Throughout this paper, all smooth manifolds $\mathcal{M}$ are assumed to be embedded in $\mathbb{R}^n$ and we consider the tangent and normal spaces to $\mathcal{M}$ as subspaces of $\mathbb{R}^n$.
A non-empty set $\mathcal{M} \subset \mathbb{R}^{n}$ is a $C^{p}$-smooth embedded submanifold of dimension $m$ if around any $x \in \mathcal{M}$ there exists an open neighborhood $U \subset \mathbb{R}^{n}$ of $x$ and a $C^{p}$-smooth map $F: U \rightarrow \mathbb{R}^{n-m}$ such that the Jacobian matrix $\nabla F(y)$ has full rank $n-m$ $\forall y \in U$ and $\mathcal{M} \cap U=F^{-1}(0)$.
The tangent space of $\mathcal{M}$ at $x$ is defined as $T_{x} \mathcal{M}=\operatorname{ker} \nabla F(x)$.
Another definition of the tangent space is that $T_{x} \mathcal{M}$ consists of all tangent vectors to smooth curves on $\mathcal{M}$ passing through $x$, i.e., $\dot{\gamma}(0)$ for all $C^{1}$-smooth curves $\gamma:(-\epsilon, \epsilon) \rightarrow \mathcal{M}$ with $\gamma(0)=x$. The normal space of $\mathcal{M}$ at $x$ is defined as the orthogonal complement of $T_{x} \mathcal{M}$, i.e., $N_{x} \mathcal{M}=\left(T_{x} \mathcal{M}\right)^{\perp}=\operatorname{Range}\left(\nabla F(x)^{T}\right)$.

If $\mathcal{M}$ is $C^{2}$-smooth, the nearest-point projection $P_{\mathcal M}$ is well defined,
single-valued, and $C^1$-smooth in a neighborhood of
$\mathcal M$. Moreover, $\nabla P_{\mathcal{M}}(x)=P_{T_{x} \mathcal{M}}$ holds for all $x \in \mathcal{M}$.

For a $C^{p}$-smooth manifold $\mathcal{M}$, a function $f: \mathcal{M} \rightarrow \mathbb{R}$ is said to be $C^{p}$-smooth if for each $x \in \mathcal{M}$ there exists an open neighborhood $U \subset \mathbb{R}^{n}$ of $x$ and a $C^{p}$-smooth function $\tilde{f}: U \rightarrow \mathbb{R}$ such that $\tilde{f}|_{\mathcal{M} \cap U}=f|_{\mathcal{M} \cap U}$. The function $\tilde{f}$ is called a $C^{p}$-smooth extension of $f$ around $x$.

It is convenient to equip each tangent space of the manifold $\mathcal{M}$ with an inner product $\langle\cdot, \cdot\rangle_{x} : T_{x} \mathcal{M} \times T_{x} \mathcal{M} \rightarrow \mathbb{R}$. If this inner product varies smoothly with $x$, then $\mathcal{M}$ is called a Riemannian manifold and the inner product is called a Riemannian metric. When $\mathcal{M}$ is an embedded submanifold of $\mathbb{R}^{n}$, the tangent space $T_{x} \mathcal{M}$ is a subspace of $\mathbb{R}^{n}$. It is natural to choose the Riemannian metric as the restriction of the Euclidean inner product on $\mathbb{R}^{n}$ to $T_{x} \mathcal{M}$, i.e., $\langle\eta, \xi\rangle_{x}=\eta^{T} \xi$ for all $\eta, \xi \in T_{x} \mathcal{M}$, in which case we call $\mathcal{M}$ a Riemannian submanifold of $\mathbb{R}^{n}$.

For a Riemannian manifold $\mathcal{M}$, we can define the Riemannian gradient of a smooth function $f: \mathcal{M} \rightarrow \mathbb{R}$ as follows. The Riemannian gradient $\nabla_{\mathcal{M}} f(x) \in T_{x} \mathcal{M}$ is the unique tangent vector satisfying
\begin{equation*}
    \langle\nabla_{\mathcal{M}} f(x), \xi\rangle_{x}=\mathrm{D} f(x)[\xi] = \frac{d}{dt} f(\gamma(t))\big|_{t=0}, \quad \forall \xi \in T_{x} \mathcal{M}
\end{equation*}
where $\mathrm{D} f(x)[\xi]$ is the directional derivative of $f$ at $x$ along $\xi$ and $\gamma:(-\epsilon, \epsilon) \rightarrow \mathcal{M}$ is any smooth curve satisfying $\gamma(0)=x$ and $\dot{\gamma}(0)=\xi$.
If $\mathcal{M}$ is a Riemannian submanifold of $\mathbb{R}^{n}$, then the Riemannian gradient can be computed via the projection of the Euclidean gradient onto the tangent space, i.e.,
\begin{equation*}
    \nabla_{\mathcal{M}} f(x)=P_{T_{x} \mathcal{M}} \nabla \tilde{f}(x)
\end{equation*}
where $\tilde{f}: \mathcal{U} \rightarrow \mathbb{R}$ is any smooth extension of $f$ around $x$.


Throughout this paper, we assume that the identifiable set $\mathcal{M}$ is a Riemannian manifold of $\mathbb{R}^{n}$ with the induced metric, that is, a Riemannian submanifold of $\mathbb{R}^{n}$.

\begin{lemma}[Manifold locally as a graph]
    \label{lem:local-graph}
    Let $\mathcal{M}\subset \mathbb{R}^n$ be a $C^p$-smooth embedded submanifold around $\bar x\in\mathcal{M}$ with $p\geq 1$.
    Then, after possibly shrinking the neighborhood of $\bar x$, there exist a radius $\rho>0$ and a $C^p$ map
    \[
        h:T_{\bar x}\mathcal{M}\cap B_\rho(0)\to N_{\bar x}\mathcal{M}
    \]
    such that $h(0)=0$, $\nabla h(0)=0$, and
    \[
        \mathcal{M}\cap U
        =
        \left\{\bar x+u+h(u):u\in T_{\bar x}\mathcal{M}\cap B_\rho(0)\right\}
    \]
    for some neighborhood $U$ of $\bar x$.
    Consequently, if $y$ is near $\bar x$ and $y-\bar x=u+v$ with $u\in T_{\bar x}\mathcal{M}$ and $v\in N_{\bar x}\mathcal{M}$, then
    \[
        R_{\bar x}(y)\triangleq \bar x+u+h(u)\in\mathcal{M}
    \]
    is well defined after shrinking the neighborhood.
\end{lemma}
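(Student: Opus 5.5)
The plan is to apply the implicit function theorem to the local defining map $F$ from the definition of an embedded submanifold. Take $U_0$ and a $C^p$ map $F:U_0\to\mathbb{R}^{n-m}$ with $\mathcal{M}\cap U_0=F^{-1}(0)$ and $\nabla F(\bar x)$ of full rank $n-m$. Set $T=T_{\bar x}\mathcal{M}=\ker\nabla F(\bar x)$ and $N=N_{\bar x}\mathcal{M}=T^\perp$, so that $\mathbb{R}^n=T\oplus N$ orthogonally. Define
\[
G:(u,v)\mapsto F(\bar x+u+v),\qquad (u,v)\in T\times N \text{ near } (0,0).
\]
Then $G$ is $C^p$ and $G(0,0)=0$. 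Its partial derivative in $v$ at the origin is $\nabla F(\bar x)|_N:N\to\mathbb{R}^{n-m}$. This map is injective because $N\cap\ker\nabla F(\bar x)=\{0\}$, and both spaces have dimension $n-m$, so it is an isomorphism.

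By the implicit function theorem there exist $\rho>0$, $\delta>0$ and a $C^p$ map $h:T\cap B_\rho(0)\to N$ with $h(0)=0$ such that, for $u\in T\cap B_\rho(0)$ and $v\in N\cap B_\delta(0)$, we have $G(u,v)=0$ if and only if $v=h(u)$. Shrinking $\rho$ by continuity of $h$ ensures $\|h(u)\|<\delta$. Now set
\[
U=\{\bar x+u+v:\ u\in T\cap B_\rho(0),\ v\in N\cap B_\delta(0)\}\cap U_0 .
\]
This set is open, since it is the image of an open product set under the linear isometry $(u,v)\mapsto u+v$ translated by $\bar x$. On $U$ we have $\mathcal{M}\cap U=\{\bar x+u+h(u)\}$. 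Shrinking $\rho$ further keeps every graph point inside $U_0$, which gives the displayed equality.

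Next, $\nabla h(0)=0$ follows from the implicit-function formula
\[
\nabla h(0)=-\bigl(\nabla_v G(0,0)\bigr)^{-1}\nabla_u G(0,0).
\]
Here $\nabla_u G(0,0)=\nabla F(\bar x)|_T=0$ because $T=\ker\nabla F(\bar x)$.

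For the consequence, let $y$ be near $\bar x$ and decompose $y-\bar x=u+v$ with $u=P_T(y-\bar x)$ and $v=P_N(y-\bar x)$. Then $\|u\|\le\|y-\bar x\|$, so whenever $\|y-\bar x\|<\rho$ we have $u$ in the domain of $h$. Hence $R_{\bar x}(y)=\bar x+u+h(u)$ is defined and lies in $\mathcal{M}\cap U\subset\mathcal{M}$.

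The only delicate point is the bookkeeping of neighborhoods, so that "$\mathcal{M}\cap U$ equals the graph" holds in both directions. The argument relies on the local uniqueness clause of the implicit function theorem, which is why $U$ is taken to be the product box intersected with $U_0$ and $\rho$ is shrunk so that the graph stays inside that box. No regularity beyond $C^1$ is needed for any step, and $C^p$ regularity of $h$ comes directly from the $C^p$ version of the theorem.
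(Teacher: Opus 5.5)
Your proposal is correct and follows the paper's proof. Both apply the implicit function theorem to $G(u,v)=F(\bar x+u+v)$ on $T\times N$, using that $\nabla F(\bar x)|_N$ is an isomorphism, and both get $\nabla h(0)=0$ from $\nabla F(\bar x)|_T=0$: you read it off the implicit-derivative formula, while the paper differentiates $F(\bar x+u+h(u))=0$, which is the same computation. Your explicit choice of $U$ as the product box intersected with $U_0$, defined after the final shrinking of $\rho$, makes the two-sided equality $\mathcal{M}\cap U=\{\bar x+u+h(u)\}$ more transparent than the paper's brief bookkeeping.
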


\begin{remark}
    The notation $R_{\bar x}(y)$ is motivated by its retraction-like behavior. A point $y$ near $\bar x$ admits a unique decomposition $y=\bar x+u+w$ with $u\in T_{\bar x}\mathcal{M}$ and $w\in N_{\bar x}\mathcal{M}$, and $R_{\bar x}$ maps $y$ to $\bar x+u+h(u)\in\mathcal{M}$. Moreover, the curve $c(t)=\bar x+tu+h(tu)$ satisfies $c(0)=\bar x$ and $c^{\prime}(0)=u$.
\end{remark}

\subsection{Slope and variational analysis}

Throughout this section, we use the standard definitions and notation of variational analysis \cite{rockafellar2009variational}. For a function $f : \mathbb{R}^n \to \overline{\mathbb{R}} \triangleq \mathbb{R} \cup \{ +\infty \}$ and a point $\bar x \in \operatorname{dom}f \triangleq \{ f < +\infty \}$, we denote by $\hat{\partial} f(\bar x)$ the {\it Fr\'echet subdifferential} of $f$ at $\bar x$ and by $\partial f(\bar x)$ the {\it limiting subdifferential} of $f$ at $\bar x$.  The notation $x_k \to_f x$ means that $x_k \to x$ and $f(x_k) \to f(x)$.
We say $f$ is {\it subdifferentially regular} at $\bar x$ if $\hat{\partial} f(\bar x) = \partial f(\bar x)$. We say $f$ is {\it prox-regular} at $\bar x$ for $\bar{v} \in \partial f(\bar x)$ if there exist $\epsilon > 0$ and $\rho \ge 0$ such that
\begin{equation*}
    f(x') \ge f(x) + \langle v, x'-x \rangle - \frac{\rho}{2} \|x'-x\|^2
\end{equation*}
for all $x,x' \in B_{\epsilon}(\bar x)$, $v \in \partial f(x)$ with $\|v-\bar{v}\| < \epsilon$ and $f(x) < f(\bar x) + \epsilon$.
We say that $f$ is prox-regular at $\bar x$ if it is prox-regular at $\bar x$ for all $\bar{v} \in \partial f(\bar x)$.

We now introduce the {\it slope} $|\nabla f|(x)$, a central tool in this paper.
\begin{definition}
    For a function $f: \mathbb{R}^n \to (-\infty,+\infty]$, the slope of $f$ at $\bar x \in \mathbb{R}^n$ is defined as $|\nabla f|(\bar x)=+\infty$ if $\bar x\notin\operatorname{dom}f$. If $\bar x \in \operatorname{dom}f$, then $|\nabla f|(\bar x)=0$ if $\bar x$ is a local minimizer of $f$; otherwise,
    \begin{equation*}
        |\nabla f|(\bar x) = \limsup_{\substack{x\to \bar x,\ x \ne \bar x}} \frac{f(\bar x)-f(x)}{d(\bar x, x)} \ge 0.
    \end{equation*}
    Here, $d$ denotes the standard Euclidean distance in $\mathbb{R}^n$.
\end{definition}

\begin{definition}
    The limiting slope of $f$ at a point $\bar x$ is the quantity
    \begin{equation*}
        \overline{|\nabla f|} (\bar x) = \liminf_{x \to_f \bar x} |\nabla f|(x).
    \end{equation*}
\end{definition}

We recall several basic properties of the slope \cite{ioffe2017variational,drusvyatskiy2013slope}.
\begin{proposition}
    \label{prop:slope}
    Consider a closed function $f : \mathbb{R}^n \to (-\infty,+\infty]$.
    The following statements for the slope hold.
    \begin{itemize}
    \item $|\nabla f|(x) \geq 0$ for all $x$, $|\nabla f|(x) = 0$ if $x$ is a local minimizer.
    \item If $f$ is subdifferentially regular at $x$, then $|\nabla f|(x) = \operatorname{dist}(0, \partial f(x))$.
    \item If $f$ is differentiable at $x$, then $|\nabla f|(x) = \|\nabla f(x)\|$.
    \end{itemize}
\end{proposition}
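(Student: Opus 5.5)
The three items of Proposition~\ref{prop:slope} will be proved separately, each directly from the definition of the slope and the standard characterization of the Fréchet subdifferential
\[
v\in\hat\partial f(x)\iff f(x')\ge f(x)+\langle v,x'-x\rangle+o(\|x'-x\|).
\]

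\textbf{First item.} Nonnegativity is immediate when $x$ is a local minimizer or $x\notin\operatorname{dom}f$, since the definition then assigns $0$ or $+\infty$. Otherwise $x$ is not a local minimizer, so there are points $x'\to x$ with $f(x')<f(x)$. Along these points the quotient $\frac{f(x)-f(x')}{\|x-x'\|}$ is positive, hence the limsup is $\ge 0$. The statement $|\nabla f|(x)=0$ at a local minimizer holds by definition.

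\textbf{Third item.} We will reduce it to the second, since differentiability gives $\hat\partial f(x)=\{\nabla f(x)\}$. If one does not want regularity at this point, a direct argument also works. For the upper bound,
\[
f(x)-f(x')=\langle\nabla f(x),x-x'\rangle+o(\|x-x'\|)\le\|\nabla f(x)\|\,\|x-x'\|+o(\|x-x'\|).
\]
For the lower bound, take $x'=x-t\nabla f(x)/\|\nabla f(x)\|$ with $t\downarrow0$, assuming $\nabla f(x)\neq0$. If $x$ is a local minimizer, then $\nabla f(x)=0$ and both sides vanish.

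\textbf{Second item, and the main work.} First we show $|\nabla f|(x)\le\operatorname{dist}(0,\hat\partial f(x))$, which needs no regularity. For any $v\in\hat\partial f(x)$,
\[
f(x)-f(x')\le\langle v,x-x'\rangle+o(\|x'-x\|)\le\|v\|\,\|x-x'\|+o(\|x'-x\|),
\]
so the limsup is at most $\|v\|$. This also covers the local-minimizer case, since then the slope is $0$.

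The reverse inequality $\operatorname{dist}(0,\hat\partial f(x))\le|\nabla f|(x)$ is the main obstacle. Let $s=|\nabla f|(x)$; if $s=+\infty$ there is nothing to prove. By definition of the limsup, for every $\varepsilon>0$ we have $f(x')\ge f(x)-(s+\varepsilon)\|x'-x\|$ near $x$. Hence $x$ is a local minimizer of
\[
g(x')=f(x')+(s+\varepsilon)\|x'-x\|.
\]
Then $0\in\hat\partial g(x)$. By the Fréchet sum rule with a convex Lipschitz term, $\hat\partial g(x)\subset\hat\partial f(x)+(s+\varepsilon)\mathbb{B}$. This inclusion needs care, since Fréchet sum rules are generally only fuzzy. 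Here it holds exactly because $\|\cdot-x\|$ is sublinear: the inequality $\langle w,h\rangle\le f(x+h)-f(x)+(s+\varepsilon)\|h\|+o(\|h\|)$ shows that $w$ lies in the subdifferential of the sublinear sum. One then separates, via a Hahn–Banach/minimax argument on $\sup_{\|b\|\le s+\varepsilon}$, to extract $v\in\hat\partial f(x)$ with $\|v\|\le s+\varepsilon$.

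If this direct route is too delicate, we will instead invoke the known result of \cite{ioffe2017variational,drusvyatskiy2013slope} that $\operatorname{dist}(0,\hat\partial f(x))\le|\nabla f|(x)$ holds generally, or the limiting-slope identity $\overline{|\nabla f|}(x)=\operatorname{dist}(0,\partial f(x))$ for closed $f$. Combining with regularity $\hat\partial f=\partial f$, closedness of $\hat\partial f(x)$, and letting $\varepsilon\to0$ gives equality.
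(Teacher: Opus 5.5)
The paper gives no proof of this proposition; it only cites \cite{ioffe2017variational,drusvyatskiy2013slope}. Your first item, your easy inequality $|\nabla f|(x)\le\operatorname{dist}(0,\hat\partial f(x))$, and your \emph{direct} argument for the third item are fine. For the third item, the case $\nabla f(x)=0$ with $x$ not a local minimizer is settled by your upper bound together with the first item.

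\textbf{The gap: the reverse inequality in the second item.} You claim the exact Fr\'echet inclusion $\hat\partial\bigl(f+c\|\cdot-x\|\bigr)(x)\subset\hat\partial f(x)+c\mathbb{B}$ for arbitrary closed $f$. This is false. Take $f=-|\cdot|$ on $\mathbb{R}$, $x=0$, $c=1+\varepsilon$. Then $0\in\hat\partial(\varepsilon|\cdot|)(0)$, while $\hat\partial f(0)=\emptyset$.
\begin{itemize}
\item Your Hahn--Banach sandwich step needs the subderivative of $f$, namely $w\mapsto\liminf_{t\downarrow0,\,w'\to w}\bigl(f(x+tw')-f(x)\bigr)/t$, to be sublinear. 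Sublinearity of the norm term is irrelevant. For $-|\cdot|$ the subderivative is $-|w|$, which is not convex.
\item A telltale sign is that your argument never uses regularity. It would therefore prove $|\nabla f|(x)=\operatorname{dist}(0,\hat\partial f(x))$ for every closed $f$. The same example refutes this: the slope is $1$ and the distance is $+\infty$.
\item For the same reason, your first fallback, that $\operatorname{dist}(0,\hat\partial f(x))\le|\nabla f|(x)$ "holds generally", is false. The general inequality goes the other way.
\item Your reduction of the third item to the second also fails. Differentiability gives $\hat\partial f(x)=\{\nabla f(x)\}$ but not $\hat\partial f(x)=\partial f(x)$. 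For example, $x^2\sin(1/x)$ at $0$ has $\partial f(0)=[-1,1]$. So keep the direct argument there.
\end{itemize}

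\textbf{The fix.} Your second fallback is the correct route, and it is exactly where regularity enters. Taking the constant sequence in the liminf gives $\overline{|\nabla f|}(x)\le|\nabla f|(x)$. Combining Proposition~\ref{prop:limiting-slope} with your easy inequality yields the chain $\operatorname{dist}(0,\partial f(x))=\overline{|\nabla f|}(x)\le|\nabla f|(x)\le\operatorname{dist}(0,\hat\partial f(x))$. The hypothesis $\hat\partial f(x)=\partial f(x)$ then closes it.

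Alternatively, you can salvage your direct route by replacing the exact Fr\'echet sum rule with the limiting sum rule for a locally Lipschitz summand. This gives $0\in\hat\partial g(x)\subset\partial g(x)\subset\partial f(x)+(s+\varepsilon)\mathbb{B}$. Now invoke $\partial f(x)=\hat\partial f(x)$ to get $\operatorname{dist}(0,\hat\partial f(x))\le s+\varepsilon$, and let $\varepsilon\to0$.
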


\begin{proposition}
    \label{prop:limiting-slope}
    Consider a closed function $f : \mathbb{R}^n \to (-\infty,+\infty]$.
    The following statement for the limiting slope holds.
    \begin{equation*}
        \overline{|\nabla f|} (\bar x) = \operatorname{dist} (0, \partial f(\bar x)).
    \end{equation*}
\end{proposition}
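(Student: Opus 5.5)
We prove the two inequalities $\overline{|\nabla f|}(\bar x)\ge \operatorname{dist}(0,\partial f(\bar x))$ and $\overline{|\nabla f|}(\bar x)\le \operatorname{dist}(0,\partial f(\bar x))$ separately. We use two standard facts linking the slope to Fréchet subgradients at a single point $x\in\operatorname{dom} f$:
\[
|\nabla f|(x)\le \operatorname{dist}(0,\hat\partial f(x)),
\]
and a fuzzy converse obtained from Ekeland's variational principle. The first fact is immediate from the definition: if $v\in\hat\partial f(x)$, then $f(x)-f(y)\le \langle v,x-y\rangle+o(\|y-x\|)\le \|v\|\,\|y-x\|+o(\|y-x\|)$, so $|\nabla f|(x)\le\|v\|$.

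\textbf{Upper bound.} We may assume $\partial f(\bar x)\neq\emptyset$; otherwise the right-hand side is $+\infty$ and there is nothing to prove. Take $\bar v\in\partial f(\bar x)$ attaining the distance; it exists because $\partial f(\bar x)$ is closed. By definition of the limiting subdifferential there are $x_k\to_f\bar x$ and $v_k\in\hat\partial f(x_k)$ with $v_k\to\bar v$. The first fact gives $|\nabla f|(x_k)\le\|v_k\|\to\|\bar v\|$. Hence
\[
\overline{|\nabla f|}(\bar x)\le\liminf_k|\nabla f|(x_k)\le\operatorname{dist}(0,\partial f(\bar x)).
\]

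\textbf{Lower bound.} This is the main obstacle. Let $x_k\to_f\bar x$ with $|\nabla f|(x_k)\to s:=\overline{|\nabla f|}(\bar x)$, and assume $s<\infty$. We must produce Fréchet subgradients of norm about $s$ at points $f$-attentively close to $x_k$. Fix $\varepsilon_k\downarrow0$. Since $|\nabla f|(x_k)<s_k:=|\nabla f|(x_k)+\varepsilon_k$, there is $\delta_k\in(0,\varepsilon_k)$ such that
\[
f(y)\ge f(x_k)-s_k\|y-x_k\|\quad\text{for all } y\in B_{\delta_k}(x_k).
\]
So $x_k$ is a local minimizer of $g_k:=f+s_k\|\cdot-x_k\|$. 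Applying the fuzzy sum rule for Fréchet subdifferentials (valid for closed $f$ plus a Lipschitz convex term in $\mathbb R^n$) to $g_k$ at $x_k$, we obtain a point $z_k$ with
\[
\|z_k-x_k\|<\varepsilon_k,\qquad |f(z_k)-f(x_k)|<\varepsilon_k,
\]
and $w_k\in\hat\partial f(z_k)$ with $\|w_k\|\le s_k+\varepsilon_k$. Then $z_k\to_f\bar x$ and $(w_k)$ is bounded, so along a subsequence $w_k\to w$. By definition, $w\in\partial f(\bar x)$ and $\|w\|\le s$. This gives $\operatorname{dist}(0,\partial f(\bar x))\le s$.

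If the fuzzy sum rule is not to be taken as given, it can be replaced by an explicit argument. Minimize $f+s_k\|\cdot-x_k\|+\tfrac{1}{\eta}\|\cdot-x_k\|^2$ over the closed ball $\bar B_{\delta_k}(x_k)$; a minimizer exists by lower semicontinuity. For small $\eta$ the minimizer lies in the interior of the ball and has function value close to $f(x_k)$, using the local minimality of $x_k$ for $g_k$. The optimality condition at an interior minimizer then yields the required Fréchet subgradient of $f$. Combining the two inequalities proves the proposition.
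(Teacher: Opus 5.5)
The paper does not prove this proposition; it quotes it from Ioffe's book and Drusvyatskiy's thesis. Your main argument is the standard one from those sources and is correct, provided the fuzzy sum rule is taken as known. The upper bound comes from $|\nabla f|(x)\le\operatorname{dist}(0,\hat\partial f(x))$ and the definition of $\partial f$. The lower bound comes from rewriting $|\nabla f|(x_k)<s_k$ as local minimality of $f+s_k\|\cdot-x_k\|$ at $x_k$, then applying the fuzzy sum rule and compactness. Equivalently, you could use the limiting sum rule $\partial(f+\phi)(x_k)\subseteq\partial f(x_k)+\partial\phi(x_k)$ for locally Lipschitz $\phi$, followed by $f$-attentive closedness of $\operatorname{gph}\partial f$.

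\textbf{The explicit fallback does not work.} Since $f(y)+s_k\|y-x_k\|\ge f(x_k)$ on the ball, adding $\tfrac1\eta\|y-x_k\|^2$ makes $x_k$ itself the unique minimizer for all small $\eta$, so nothing moves. The Fermat rule there only gives $0\in\hat\partial\bigl(f+s_k\|\cdot-x_k\|\bigr)(x_k)$. Because the norm term is not differentiable at $x_k$, this does not split into a Fr\'echet subgradient of $f$. Indeed $\hat\partial f(x_k)$ can be empty: $f(y)=-|y|$ at $x_k=0$ has slope $1$ and no Fr\'echet subgradient. The point of the fuzzy rule is that the subgradient lives at a \emph{different} nearby point.

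An elementary repair is to smooth the kink. For $\delta<\delta_k$ and small $\tau>0$, minimize $f(y)+s_k\sqrt{\|y-x_k\|^2+\tau^2}+\|y-x_k\|^2$ over $\bar B_\delta(x_k)$. Compare a minimizer $y_\tau$ with the value $f(x_k)+s_k\tau$ at $x_k$, and use $f(y_\tau)\ge f(x_k)-s_k\|y_\tau-x_k\|$. This gives $\|y_\tau-x_k\|^2\le s_k\tau$ and $f(x_k)-s_k\sqrt{s_k\tau}\le f(y_\tau)\le f(x_k)$. So $y_\tau$ is interior for small $\tau$. Because the added terms are differentiable, the Fermat rule then produces a vector of $\hat\partial f(y_\tau)$ of norm at most $s_k+2\sqrt{s_k\tau}$, which is exactly what your argument needs. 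Decoupling the variables, as in the usual proof of the fuzzy sum rule, also works.

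\textbf{Domain assumption.} Your lower bound, like the statement itself, needs $f(\bar x)<\infty$, because the step ``by definition $w\in\partial f(\bar x)$'' is only available at points of finite value. Off the domain the equality can fail. Take the lsc staircase $f=k$ on $[1/(k+1),1/k)$ for $k\ge1$, $f=0$ on $[1,\infty)$, and $f=+\infty$ on $(-\infty,0]$. Every point of $(0,\infty)$ is a local minimizer, so $\overline{|\nabla f|}(0)=0$ while $\partial f(0)=\emptyset$. You should state $\bar x\in\operatorname{dom}f$ explicitly.
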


The following lemma relates the quantities $|\nabla (f|_{\mathcal{M}})|(x)$ and $\|\nabla_{\mathcal{M}} f(x)\|$.
\begin{lemma}\label{lem:slope-grad}
Suppose that $\mathcal{M}$ is a $C^1$-smooth embedded submanifold around $x$, and that $f|_{\mathcal{M}}$ is $C^1$-smooth around $x$. Then
\begin{equation*}
    |\nabla (f|_{\mathcal{M}})|(x) = \|\nabla_{\mathcal{M}} f(x)\|.
\end{equation*}
\end{lemma}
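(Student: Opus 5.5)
We prove the two inequalities separately, using a local $C^1$ extension $\tilde f$ of $f|_{\mathcal{M}}$ on a neighborhood $U$ of $x$, so that $\nabla_{\mathcal M} f(x)=P_{T_x\mathcal M}\nabla\tilde f(x)$. Write $g=\nabla_{\mathcal M} f(x)$. The slope of $f|_{\mathcal M}$ is computed with respect to the restricted Euclidean distance on $\mathcal M$, i.e.
\[
|\nabla (f|_{\mathcal{M}})|(x)=\limsup_{y\to x,\ y\in\mathcal M,\ y\neq x}\frac{\tilde f(x)-\tilde f(y)}{\|y-x\|},
\]
with the convention that it is $0$ at a local minimizer.

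\emph{Upper bound.} For $y\in\mathcal M$ near $x$, the mean value theorem and continuity of $\nabla\tilde f$ give
\[
\tilde f(x)-\tilde f(y)=\langle\nabla\tilde f(x),x-y\rangle+o(\|y-x\|).
\]
Split $\nabla\tilde f(x)=g+w$ with $w\in N_x\mathcal M$. Apply Lemma~\ref{lem:local-graph} at $x$ (with $x$ in place of $\bar x$): we have $y-x=u+h(u)$ with $u\in T_x\mathcal M$, $h(u)\in N_x\mathcal M$, and $h(u)=o(\|u\|)$ because $\nabla h(0)=0$. Since $\langle g,h(u)\rangle=0$, we get $\langle w,y-x\rangle=\langle w,h(u)\rangle=o(\|y-x\|)$. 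Hence
\[
\tilde f(x)-\tilde f(y)\le \|g\|\,\|y-x\|+o(\|y-x\|),
\]
and dividing and taking $\limsup$ gives $|\nabla (f|_{\mathcal{M}})|(x)\le\|g\|$. If $x$ is a local minimizer of $f|_{\mathcal M}$, the slope is $0$, and the inequality holds trivially.

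\emph{Lower bound.} If $g=0$ there is nothing to prove. Otherwise take the curve $y_t=x-tg+h(-tg)\in\mathcal M$ for small $t>0$. Then $\|y_t-x\|=t\|g\|+o(t)$ and
\[
\tilde f(x)-\tilde f(y_t)=\langle \nabla\tilde f(x),tg-h(-tg)\rangle+o(t)=t\|g\|^2+o(t),
\]
again using orthogonality and $h=o$. So the ratio tends to $\|g\|$, and $\limsup\ge\|g\|>0$. This also shows $x$ is not a local minimizer of $f|_{\mathcal M}$, so the limsup formula (rather than the convention) applies. Combining both bounds gives equality.

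The main subtlety is that $\mathcal M$ is only $C^1$. We therefore never use $C^1$-smoothness of a projection onto $\mathcal M$, only the graph representation of Lemma~\ref{lem:local-graph} with $h(u)=o(\|u\|)$. Care is also needed with the local-minimizer convention in the slope definition, which the case split above handles.
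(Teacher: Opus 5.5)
Your proof is correct and follows essentially the same route as the paper. Both arguments use the graph representation $x+u+h(u)$ with $h(u)=o(\|u\|)$, a first-order expansion for the upper bound, and the test curve in direction $-g$ for the lower bound. The only difference is that you expand an ambient $C^1$ extension $\tilde f$ and absorb the normal component $w$ of $\nabla\tilde f(x)$ via $\langle w,h(u)\rangle=o(\|u\|)$, whereas the paper expands the pulled-back function $F=f\circ\Phi$ on $T_x\mathcal{M}$ directly.

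One small slip: the identity $\langle w,y-x\rangle=\langle w,h(u)\rangle$ follows from $\langle w,u\rangle=0$, not from $\langle g,h(u)\rangle=0$.
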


\subsection{Identifiable manifolds and partial smoothness}
We first introduce the definition of identifiable manifold and partial smoothness, which are central to our analysis.
\begin{definition}[Identifiable manifold]
    \label{def:identifiable}
    For a closed function $f : \mathbb{R}^n \to (-\infty,+\infty]$, consider a point $\bar x \in \operatorname{dom}f$ with $0 \in \hat{\partial} f(\bar x)$ and a set $\mathcal{M} \subset \mathbb{R}^n$ containing $\bar x$.
        $\mathcal{M}$ is an {\it identifiable manifold} at $\bar x$ for $0  \in \hat{\partial} f(\bar x)$ if
        \begin{enumerate}
            \item (\textbf{Sharpness}) 
            \begin{equation*}
        \liminf_{x \rightarrow _f \bar x, x \notin \mathcal{M}, y \in \partial f(x)} \|y\| > 0.
    \end{equation*}
            \item (\textbf{Smoothness}) $\mathcal{M}$ is $C^2$-smooth around $\bar x$ and $f|_{\mathcal{M}}$ is $C^2$-smooth around $\bar x$.
        \end{enumerate}
\end{definition}

\begin{definition}[$C^p$-Partial smoothness]
    \label{def:partial-smoothness}
Given a closed function $f \colon \mathbb{R}^n \to \overline{\mathbb{R}}$, consider a point $\bar x \in \mbox{dom}\, f$ and a $C^p$-smooth manifold $\mathcal{M}$ containing $\bar x$. Then $f$ is {\it partly smooth} at $\bar x$ for a subgradient 
$\bar y \in \partial f(\bar x)$ relative to $\mathcal{M}$ if it satisfies the following properties:
\begin{quote}
\begin{description}
\item[Prox-regularity:]  the function $f$ is prox-regular at $\bar x$ for $\bar y$.
\item[Restricted smoothness:]  the restriction $f|_{\mathcal{M}}$ is $C^p$-smooth around $\bar x$.
\item[Sharpness:] the subspace $\mathrm{par}\, \hat\partial f(\bar x)$ is just the normal space $N_{\bar x}\mathcal{M} $.
\item[Inner semicontinuity:] for all $y \in \partial f(\bar x)$ near $\bar y$ and every
sequence $x_r \to \bar x$ in $\mathcal{M}$, there exists a sequence $y_r \in \partial f(x_r)$ converging to $y$.
\end{description}
\end{quote}
\end{definition}

We say that $f$ is $C^p$-partly smooth at $\bar x$ relative to $\mathcal{M}$ if it is $C^p$-partly smooth at $\bar x$ for all subgradients $\bar y \in \partial f(\bar x)$ relative to $\mathcal{M}$.
$f$ is $C^p$-partly smooth around $\bar x$ relative to $\mathcal{M}$ if there exists a neighborhood $U$ of $\bar x$ such that $f$ is $C^p$-partly smooth at every point in $U \cap \mathcal{M}$ relative to $\mathcal{M}$.

Since an identifiable manifold is $C^2$-smooth, it is locally closed and the projection $P_{\mathcal{M}}$ is well defined and unique in a neighborhood of each point $x \in \mathcal{M}$. 
Moreover, the sharpness condition implies that $f$ grows at least linearly around $\bar x$ in the normal direction of $\mathcal{M}$, as specified in the following lemma.
A proof of Lemma~\ref{lem:linear-growth} can be found in \cite[Theorem D.2]{davis2024stochastic}.
\begin{lemma}[Identification implies local linear growth]
    \label{lem:linear-growth}
    Consider a closed function $f: \mathbb{R}^n \to (-\infty,+\infty]$. Suppose that the manifold $\mathcal{M}$ is identifiable at $\bar x$ for $0 \in \hat \partial f(\bar x)$.
    Then there exist constants $\delta, \epsilon>0$ such that
    \begin{equation*}
        f(x) \geq f(P_{\mathcal{M}}(x)) + \delta d(x, P_{\mathcal{M}}(x)), \quad \forall x \in B_{\epsilon}(\bar x)
    \end{equation*}
    where $P_{\mathcal{M}}(x)$ is the nearest-point projection of $x$ onto the manifold $\mathcal{M}$.
\end{lemma}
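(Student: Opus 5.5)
The plan is to argue by contradiction, using the sharpness condition of Definition~\ref{def:identifiable} together with a slope/descent argument in the spirit of Ekeland's variational principle. Sharpness gives constants $\eta>0$ and $\epsilon_0>0$ with the following property: whenever $x\notin\mathcal{M}$, $\|x-\bar x\|<\epsilon_0$ and $|f(x)-f(\bar x)|<\epsilon_0$, every $y\in\partial f(x)$ satisfies $\|y\|\ge\eta$. By Proposition~\ref{prop:limiting-slope} this means the limiting slope is at least $\eta$ at such points, and hence the slope $|\nabla f|(x)\ge\eta$ there as well.

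I will take $\epsilon$ small enough that $P_{\mathcal M}$ is single-valued and continuous on $B_\epsilon(\bar x)$, which holds because $\mathcal M$ is $C^2$. Then I fix $x$ and set $z=P_{\mathcal M}(x)$. The first case is easy: if $f(x)$ is far above $f(\bar x)$, the desired inequality follows from the continuity of $f|_{\mathcal M}$ at $\bar x$. Indeed, $f(z)$ is close to $f(\bar x)$, while $d(x,z)\le\epsilon$, so $f(x)-f(z)\ge\delta\epsilon\ge\delta d(x,z)$ once $\delta$ is small.

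The main case is $f(x)\le f(\bar x)+\epsilon_0/2$. Suppose $f(x)<f(z)+\delta\,d(x,z)$. I apply Ekeland's principle to $f$ restricted to a closed ball around $x$ of radius comparable to $d(x,z)$, with parameter $\delta'$ slightly larger than $\delta$ but smaller than $\eta$. This produces a point $w$ with $f(w)\le f(x)$, $\|w-x\|\le (f(x)-\inf f)/\delta'$, and $|\nabla f|(w)\le\delta'<\eta$ (if $w$ lies in the interior of the ball). Since the slope at $w$ is below $\eta$, sharpness forces $w\in\mathcal M$, provided $w$ stays near $\bar x$ and $f(w)$ stays near $f(\bar x)$. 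The latter requires a lower bound $f\ge f(\bar x)-o(1)$ near $\bar x$. That bound comes from $0\in\hat\partial f(\bar x)$, which gives $f(u)\ge f(\bar x)-o(\|u-\bar x\|)$, and this also controls the Ekeland radius.

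Once $w\in\mathcal M$ is established, the estimate $f(w)\le f(x)<f(z)+\delta d(x,z)$ must be played against $\|w-x\|\ge d(x,\mathcal M)=d(x,z)$. The cleaner way to do this is to use Ekeland's inequality $f(w)+\delta'\|w-x\|\le f(x)$, which gives $f(w)+\delta' d(x,z)\le f(x)<f(z)+\delta d(x,z)$. So we need $f(z)\le f(w)+(\delta'-\delta)d(x,z)$. This does not come for free, because $w$ and $z$ are different points of $\mathcal M$. I will handle it by applying Ekeland instead to $g=f-\tilde f\circ P_{\mathcal M}$, where $\tilde f$ is a $C^2$ extension of $f|_{\mathcal M}$. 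This $g$ is locally Lipschitz-perturbed from $f$ with a small constant near $\bar x$, because $\nabla(\tilde f\circ P_{\mathcal M})(\bar x)$ is tangent and equal to $\nabla_{\mathcal M}f(\bar x)$, which is $0$ by criticality. Then $g=0$ on $\mathcal M$, and the slopes of $f$ and $g$ differ by at most a small constant $L$. Choosing $\delta+L<\eta$, Ekeland applied to $g$ at $x$ with $g(x)<\delta d(x,z)$ and $\inf g\ge -o(\cdot)$ yields $w$ with $|\nabla g|(w)\le\delta$, hence $|\nabla f|(w)<\eta$, hence $w\in\mathcal M$, hence $g(w)=0$. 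But Ekeland also gives $g(w)\le g(x)-\delta\|w-x\|<\delta d(x,z)-\delta d(x,z)=0$, which is a contradiction.

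I expect the main obstacle to be the localization bookkeeping. We need a lower bound on $g$ near $\bar x$ so that Ekeland's point $w$ stays inside $B_{\epsilon_0}(\bar x)$ with $f(w)$ near $f(\bar x)$, and we need to rule out $w$ lying on the boundary of the ball where the slope estimate fails. The lower bound should follow from $0\in\hat\partial f(\bar x)$ combined with the smallness of $\nabla(\tilde f\circ P_{\mathcal M})$ near $\bar x$.
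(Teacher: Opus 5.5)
The paper gives no proof of this lemma; it only cites \cite[Theorem D.2]{davis2024stochastic}. Your final argument, Ekeland's principle applied to the tilted function $g=f-\tilde f\circ P_{\mathcal M}$, is a correct self-contained proof once the localization is fixed as described below.

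It also fits the paper's own discussion. Applying Ekeland directly to $f$, as in your first attempt, only yields some $w\in\mathcal M$ with $f(x)\ge f(w)+\delta'\|x-w\|$. That is exactly the Lewis--Tian-type growth the paper criticizes at the start of Section~\ref{sec:slope-eb}, because $w$ need not be $P_{\mathcal M}(x)$. Your tilt removes this defect. Since $g\equiv 0$ on $\mathcal M$ near $\bar x$, the value at whichever point of $\mathcal M$ Ekeland produces is irrelevant, and only $\|w-x\|\ge d(x,\mathcal M)=\|x-z\|$ is used.

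Each hypothesis enters transparently. Sharpness gives $|\nabla f|\ge\operatorname{dist}(0,\partial f)\ge\eta$ off $\mathcal M$. $C^2$-smoothness makes $\tilde f\circ P_{\mathcal M}$ a $C^1$ function whose gradient $P_{T_{\bar x}\mathcal M}\nabla\tilde f(\bar x)=\nabla_{\mathcal M}f(\bar x)=0$ vanishes at $\bar x$. Finally, $0\in\hat\partial f(\bar x)$ gives $g\ge -o(\|\cdot-\bar x\|)$. By contrast, the paper's $C^1$ substitute, Theorem~\ref{thm:linear-growth}, avoids Ekeland and uses prox-regularity plus inner semicontinuity. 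The price is that $P_{\mathcal M}$ is replaced by the graph retraction $R_x$, and partial smoothness with nondegeneracy must be assumed.

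\textbf{Required fix to the localization.} The Ekeland set must be $\bar B_R(\bar x)$ with $R$ independent of $x$, not a ball around $x$ of radius comparable to $d(x,z)$ as in your first sketch. The only lower bound available is $g\ge-\theta(R)R$ on $\bar B_R(\bar x)$ with $\theta(R)\to0$. This is not small relative to $\delta d(x,z)$ when $x$ is much closer to $\mathcal M$ than to $\bar x$. So on a ball of radius $\sim d(x,z)$ the Ekeland point could land on the boundary, where no slope bound is available.

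The quantifiers should be ordered as follows. Fix $\delta=\eta/2$ first. Then choose $R\le\epsilon_0$ so small that:
\begin{itemize}
\item $\sup_{B_R(\bar x)}\|\nabla(\tilde f\circ P_{\mathcal M})\|<\eta/2$;
\item $\theta(R)<\delta/2$;
\item $|f(w)-f(\bar x)|<\epsilon_0$ for all $w\in B_R(\bar x)$ with $-\theta(R)R\le g(w)\le\delta R$.
\end{itemize}
Now take $x\in B_{R/4}(\bar x)$. The Ekeland point then satisfies $\|w-x\|\le (g(x)-\inf g)/\delta< d(x,z)+R/2$, hence $\|w-\bar x\|<R$. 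The slope estimate therefore holds at $w$, and the rest of your argument goes through.

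Your separate case ``$f(x)$ far above $f(\bar x)$'' is unnecessary. The contradiction hypothesis $f(x)<f(z)+\delta d(x,z)$ already forces $f(x)$ close to $f(\bar x)$.
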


The relationship between identifiability and partial smoothness is established in the following theorem.

\begin{theorem}[Identifiability and partial smoothness]
    \label{thm:identifiability-partial-smoothness}
Consider a closed function $f \colon \mathbb{R}^n \to (-\infty,+\infty]$ with $0 \in \hat \partial f(\bar x)$ at a point $\bar x$ lying in a manifold $\mathcal{M}$. Then $\mathcal{M}$ is identifiable at 
$\bar x$ for $0 \in \hat{\partial} f(\bar x)$ if and only if $f$ is $C^2$-partly smooth at $\bar x$ for zero relative to $\mathcal{M}$ with 
$0 \in \operatorname{ri}\, \hat \partial f(\bar x)$.
\end{theorem}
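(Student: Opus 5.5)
This is the Drusvyatskiy--Lewis characterization \cite{drusvyatskiy2012optimalityidentifiabilitysensitivity}, and I would prove it by splitting into the two implications. Smoothness is common to both sides: $\mathcal M$ and $f|_{\mathcal M}$ are $C^2$ around $\bar x$. So the work is to match the sharpness condition of Definition~\ref{def:identifiable} with the remaining ingredients of Definition~\ref{def:partial-smoothness}, namely prox-regularity, $\mathrm{par}\,\hat\partial f(\bar x)=N_{\bar x}\mathcal M$, inner semicontinuity, and $0\in\operatorname{ri}\hat\partial f(\bar x)$. The first step, used in both directions, is the inclusion
\[
    \hat\partial f(\bar x)\subset \nabla_{\mathcal M}f(\bar x)+N_{\bar x}\mathcal M .
\]
It holds because any Fr\'echet subgradient restricts to a Fr\'echet subgradient of $f|_{\mathcal M}$. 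Since $0\in\hat\partial f(\bar x)$, this gives $\nabla_{\mathcal M} f(\bar x)=0$ and $\mathrm{par}\,\hat\partial f(\bar x)\subset N_{\bar x}\mathcal M$.

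\textbf{($\Leftarrow$)} Assume $C^2$ partial smoothness with $0\in\operatorname{ri}\hat\partial f(\bar x)$. I would argue by contradiction: suppose there exist $x_k\to_f\bar x$, $x_k\notin\mathcal M$, and $y_k\in\partial f(x_k)$ with $y_k\to 0$. The standard finite identification theorem for partly smooth functions (Hare--Lewis) then forces $x_k\in\mathcal M$ eventually. To make the argument self-contained I would reprove that theorem as follows.
\begin{enumerate}
    \item Since $0\in\operatorname{ri}\hat\partial f(\bar x)$ and the affine span of $\hat\partial f(\bar x)$ is $N_{\bar x}\mathcal M$, there is $r>0$ with $B_r(0)\cap N_{\bar x}\mathcal M\subset\hat\partial f(\bar x)$.
    \item Inner semicontinuity, applied at $z_k=P_{\mathcal M}(x_k)$, yields subgradients $w_k\in\partial f(z_k)$ approximating $r\nu$ for any unit normal $\nu$. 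Take $\nu$ to be the normalized component of $x_k-z_k$.
    \item Prox-regularity at $\bar x$ for subgradients near $0$ gives
    \[
        f(x_k)\ge f(z_k)+\langle w_k,x_k-z_k\rangle-\tfrac{\rho}{2}\|x_k-z_k\|^2 ,
    \]
    which is the linear growth $f(x_k)-f(z_k)\gtrsim r\|x_k-z_k\|$.
    \item Prox-regularity applied with base point $x_k$ and subgradient $y_k$ gives
    \[
        f(z_k)\ge f(x_k)+\langle y_k,z_k-x_k\rangle-\tfrac{\rho}{2}\|z_k-x_k\|^2 ,
    \]
    so $f(x_k)-f(z_k)\le o(\|x_k-z_k\|)$.
\end{enumerate}
Steps 3 and 4 together contradict $x_k\neq z_k$, which is what we wanted.

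\textbf{($\Rightarrow$)} Assume $\mathcal M$ is identifiable. I would derive the four properties in turn.
\begin{enumerate}
    \item \emph{Prox-regularity near $0$.} Sharpness localizes every $(x,y)\in\operatorname{gph}\partial f$ near $(\bar x,0)$ with $f(x)$ near $f(\bar x)$ onto $\mathcal M$, where $f$ coincides with a $C^2$ function $\tilde f$. The growth from Lemma~\ref{lem:linear-growth} is $f(x')\ge f(P_{\mathcal M}x')+\delta\|x'-P_{\mathcal M}x'\|$. Combining it with a Taylor expansion of $\tilde f$ along $\mathcal M$ gives the quadratic minorant required at such $x\in\mathcal M$. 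For subgradients $v$ with $\|v\|$ small relative to $\delta$, the normal part $\langle v,x'-P_{\mathcal M}x'\rangle$ is absorbed by the linear growth term.
    \item \emph{$N_{\bar x}\mathcal M\subset\mathrm{par}\,\hat\partial f(\bar x)$ and $0\in\operatorname{ri}$.} The same growth shows that every normal vector of norm at most $\delta/2$ is a Fr\'echet subgradient at $\bar x$. Together with the inclusion above, this makes $\hat\partial f(\bar x)$ a subset of $N_{\bar x}\mathcal M$ containing a relative ball around $0$.
    \item \emph{Inner semicontinuity.} The growth inequality holds uniformly at nearby base points $z\in\mathcal M$, with $\nabla_{\mathcal M} f(z)\to 0$. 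Hence $\nabla_{\mathcal M} f(z)+(B_{\delta/2}\cap N_z\mathcal M)\subset\hat\partial f(z)$, and $N_z\mathcal M$ varies continuously. This gives inner semicontinuity for $y$ near $0$.
\end{enumerate}

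The main obstacle is the uniformity in the ($\Rightarrow$) direction. Lemma~\ref{lem:linear-growth} has to be applied not only at $\bar x$ but at all base points of $\mathcal M$ near $\bar x$, with a fixed $\delta$, in order to get both prox-regularity and inner semicontinuity. The key fact making this possible is that the sharpness $\liminf$ is taken over a whole neighborhood of $\bar x$. I would therefore rerun the proof of Lemma~\ref{lem:linear-growth}, i.e.\ an Ekeland-type slope argument, at nearby points $z\in\mathcal M$.
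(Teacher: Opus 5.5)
The paper gives no proof of this theorem; it only cites \cite[Theorem 10.12]{drusvyatskiy2012optimalityidentifiabilitysensitivity} and \cite[Theorem 8.5]{lewis2024identifiability}. Your proposal is a correct, self-contained alternative, modulo three details listed below.

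In ($\Leftarrow$) you pair the prox-regularity inequality at $z_k$ (with $w_k\approx r\nu$) against the one at $x_k$ (with $y_k\to 0$). This is the same mechanism as the paper's Lemma~\ref{lem:sharpness-under-C1-partly-smoothness}, which uses the graph point $\bar x+p_r+h(p_r)$ instead of $P_{\mathcal M}(x_k)$.

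In ($\Rightarrow$) you obtain prox-regularity, $\operatorname{par}\hat\partial f(\bar x)=N_{\bar x}\mathcal M$, $0\in\operatorname{ri}\hat\partial f(\bar x)$, and inner semicontinuity all from the single normal-growth inequality of Lemma~\ref{lem:linear-growth}, plus Taylor estimates along $\mathcal M$. What your route buys is transparency about regularity. In ($\Rightarrow$), $C^2$ is needed only for the quadratic error terms in prox-regularity: the Taylor remainder of $\tilde f$, and the curvature bound $|\langle n,z-x\rangle|\le C\|n\|\|z-x\|^2$ for $x,z\in\mathcal M$, $n\in N_x\mathcal M$. The remaining ingredients need only first-order estimates, and the sharpness part of ($\Leftarrow$) needs only $C^1$. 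The citation is shorter but hides these dependencies.

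\emph{First detail.} The \emph{main obstacle} you name is not one. Lemma~\ref{lem:linear-growth} is already stated for every $x\in B_\epsilon(\bar x)$, measured to $P_{\mathcal M}(x)$. So for $z\in\mathcal M\cap B_{\epsilon/2}(\bar x)$ it holds verbatim for all $x'\in B_{\epsilon/2}(z)$ with the same $\delta$. The base point enters only through the Taylor expansion of $\tilde f$. The paper's objection to Lewis--Tian concerns a different growth statement. Rerunning Ekeland matters only if you want independence from the cited proof of that lemma.

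\emph{Second detail.} In ($\Leftarrow$) step 2, inner semicontinuity is a statement about a fixed target in $\partial f(\bar x)$. Your direction $\nu_k=(x_k-z_k)/\|x_k-z_k\|$ varies with $k$ and lies in $N_{z_k}\mathcal M$ rather than $N_{\bar x}\mathcal M$. The fix is:
\begin{itemize}
\item pass to a subsequence $\nu_k\to\nu\in N_{\bar x}\mathcal M$;
\item apply inner semicontinuity to $r\nu$, with $r$ below the prox-regularity radius;
\item use $\langle w_k,\nu_k\rangle\to r$.
\end{itemize}
The paper's graph point avoids the $N_{z_k}$-versus-$N_{\bar x}$ issue, since there $x_r-x_r'\in N_{\bar x}\mathcal M$.

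\emph{Third detail.} Prox-regularity and inner semicontinuity concern limiting subgradients, while your preliminary inclusion covers only $\hat\partial f$. You need the following fact: if $x\in\mathcal M$ is near $\bar x$, $y\in\partial f(x)$, and $\|y\|$ is small, then $y\in\nabla\tilde f(x)+N_x\mathcal M$. This holds for two reasons. Sharpness forces the approximating Fr\'echet pairs $(x_j,y_j)$ onto $\mathcal M$. And $x\mapsto\nabla\tilde f(x)+N_x\mathcal M$ has closed graph on $\mathcal M$.

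This fact is what makes $y-\nabla\tilde f(x)$ normal in your prox-regularity estimate. It also gives $\partial f(\bar x)\cap B_\eta(0)\subset N_{\bar x}\mathcal M$ for some $\eta>0$. With that, $y_r=\nabla_{\mathcal M}f(z_r)+P_{N_{z_r}\mathcal M}(y)$ works for every $y\in\partial f(\bar x)$ near $0$.
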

For more details and the proof, see \cite[Theorem 10.12]{drusvyatskiy2012optimalityidentifiabilitysensitivity} and \cite[Theorem 8.5]{lewis2024identifiability}.

\subsection{$\mathcal{VU}$-decomposition and $\mathcal{U}$-Lagrangian}
The $\mathcal{VU}$-decomposition provides a geometric splitting of the subdifferential that will later allow us to connect $\partial f$ to the Riemannian gradient of $f|_{\mathcal{M}}$, a key step in the equivalence between ambient and manifold error bounds.
For a proper, closed and convex function $f$, a point $x \in \operatorname{dom} f$ and an arbitrary $g \in \partial f(x)$, the $\mathcal{VU}$ decomposition is defined as follows:
\begin{definition}
    \label{def:vu}
    For a proper, closed, and convex function $f:\mathbb{R}^n\to(-\infty,+\infty]$, consider a point $x\in\operatorname{dom}f$ and a subgradient $g\in\partial f(x)$. The $\mathcal{V}$-space at $x$ for $g$ is defined as the linear span of the set $\partial f(x) - g$, i.e.,
    \begin{equation*}
        \mathcal{V}(x) = \operatorname{span}(\partial f(x) - g).
    \end{equation*}
    The $\mathcal{U}$-space at $x$ for $g$ is defined as the orthogonal complement of $\mathcal{V}(x)$, i.e.,
    \begin{equation*}
        \mathcal{U}(x) = \mathcal{V}(x)^{\perp}.
    \end{equation*}
\end{definition}

For a given point $x \in \operatorname{dom}f$ and a subgradient $g \in \partial f(x)$, the $\mathcal{U}$-Lagrangian of $f$ at $x$ for $g$ is the function $L_{\mathcal{U}}(u;g): \mathcal{U}(x) \to (-\infty, +\infty]$ defined as follows:
\begin{definition}[$\mathcal{U}$-Lagrangian]
    \label{def:u-lagrangian}
    For a proper, closed, and convex function $f:\mathbb{R}^n\to(-\infty,+\infty]$, consider a point $x\in\operatorname{dom}f$ and a subgradient $g\in\partial f(x)$. The $\mathcal{U}$-Lagrangian of $f$ at $x$ for $g$ is defined as
    \begin{equation*}
        L_{\mathcal{U}}(u;g) = \inf_{v \in \mathcal{V}(x)} \left\{ f(x + u + v) - \langle g, v \rangle \right\}.
    \end{equation*}
    Associated with the $\mathcal{U}$-Lagrangian, we define the set of $\mathcal{V}$-space minimizers as
    \begin{equation*}
        W(u;g) = \arg\min_{v \in \mathcal{V}(x)} \left\{ f(x + u + v) - \langle g, v \rangle \right\}.
    \end{equation*}
\end{definition}

The following lemma extends the $\mathcal{U}$-Lagrangian properties in \cite[Theorems 3.2 and 3.3]{lemarechal2000u} and \cite[Lemma 2.4]{miller2005newton}, which are stated there for finite-valued convex functions, to the extended-real-valued setting.
\begin{lemma}
    \label{lem:u-lagrangian-property}
    The projection of a subgradient onto $\mathcal{U}(x)$ is independent of its choice. More precisely, for every $g\in\partial f(x)$,
    \begin{equation*}
        g_{\mathcal U}(x):=P_{\mathcal{U}(x)}(g)
        =P_{\operatorname{aff}\partial f(x)}(0).
    \end{equation*}
    The function $L_{\mathcal{U}}(\cdot;g)$ is convex, finite in a neighborhood of $0$ in $\mathcal{U}(x)$, and differentiable at $0$, with
    \begin{equation*}
        \nabla L_{\mathcal{U}}(0;g)=P_{\mathcal{U}(x)}(g).
    \end{equation*}
    Moreover, $0\in W(0;g)$ and $L_{\mathcal{U}}(0;g)=f(x)$. If $g\in\operatorname{ri}\partial f(x)$, then $W(0;g)=\{0\}$.
\end{lemma}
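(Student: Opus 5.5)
We prove the four assertions of Lemma~\ref{lem:u-lagrangian-property} in order, working throughout in the splitting $\mathbb{R}^n=\mathcal U(x)\oplus\mathcal V(x)$. We also use that $\mathcal V(x)$ does not depend on the choice of $g\in\partial f(x)$. Indeed, $\operatorname{span}(\partial f(x)-g)$ equals the subspace parallel to $\operatorname{aff}\partial f(x)$ for every $g\in\partial f(x)$.

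\textbf{Projection formula.} Write $\operatorname{aff}\partial f(x)=g+\mathcal V(x)$. For any $g,g'\in\partial f(x)$ we have $g-g'\in\mathcal V(x)$, hence $P_{\mathcal U(x)}g=P_{\mathcal U(x)}g'$. The point of $g+\mathcal V(x)$ closest to $0$ is $g-P_{\mathcal V(x)}g=P_{\mathcal U(x)}g$. This gives $P_{\mathcal U(x)}(g)=P_{\operatorname{aff}\partial f(x)}(0)$.

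\textbf{Convexity and the value at $0$.} Set $\varphi(u,v)=f(x+u+v)-\langle g,v\rangle$. This function is jointly convex on $\mathcal U(x)\times\mathcal V(x)$. Its partial infimum $L_{\mathcal U}(\cdot;g)$ is therefore convex, provided it never takes the value $-\infty$. To rule that out, use the subgradient inequality
\[
f(x+u+v)\ge f(x)+\langle g,u+v\rangle ,
\]
which yields $\varphi(u,v)\ge f(x)+\langle g,u\rangle$. Hence $L_{\mathcal U}(u;g)\ge f(x)+\langle g_{\mathcal U},u\rangle>-\infty$, where $g_{\mathcal U}:=P_{\mathcal U(x)}g$. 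At $u=0$ the same bound gives $\varphi(0,v)\ge f(x)=\varphi(0,0)$. So $0\in W(0;g)$ and $L_{\mathcal U}(0;g)=f(x)$.

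\textbf{Finiteness near $0$ (main obstacle).} In the finite-valued case this is automatic; in the extended-valued case it must be proved. We need that every small $u\in\mathcal U(x)$ has some $v\in\mathcal V(x)$ with $x+u+v\in\operatorname{dom}f$.
\begin{itemize}
\item First use the inclusion $N_{\operatorname{dom}f}(x)\subset(\partial f(x))_\infty$, the recession cone. This holds whenever $\partial f(x)\neq\emptyset$, since adding normal-cone elements to a subgradient keeps it a subgradient.
\item Recession directions of $\partial f(x)$ lie in the parallel subspace $\mathcal V(x)$. Hence $N_{\operatorname{dom}f}(x)\subset\mathcal V(x)$.
\item Taking polars, $\mathcal U(x)=\mathcal V(x)^\perp\subset T_{\operatorname{dom}f}(x)$, and in fact $\mathcal U(x)$ lies in the lineality space of the tangent cone.
\end{itemize}
Tangency alone does not make $x+u+v$ lie in $\operatorname{dom}f$. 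For this we pass to the relative interior. Let $A=P_{\mathcal U(x)}(\operatorname{dom}f-x)$, a convex subset of $\mathcal U(x)$. If $0$ were not in $\operatorname{int}A$ relative to $\mathcal U(x)$, separation would give $w\in\mathcal U(x)\setminus\{0\}$ with $\langle w,a\rangle\le 0$ for all $a\in A$, assuming $A$ is full-dimensional in $\mathcal U(x)$. Then $w\in N_{\operatorname{dom}f}(x)\subset\mathcal V(x)$, contradicting $w\in\mathcal U(x)\setminus\{0\}$.

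The lower-dimensional case is handled the same way. If $\operatorname{aff}A\neq\mathcal U(x)$, some nonzero $w\in\mathcal U(x)$ is orthogonal to $\operatorname{aff}A$. Both $\pm w$ are then normal to $\operatorname{dom}f$ at $x$, again a contradiction. So $0\in\operatorname{int}_{\mathcal U(x)}A$, and $L_{\mathcal U}(\cdot;g)$ is finite near $0$.

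\textbf{Differentiability at $0$.} A convex function finite near $0$ is continuous there, so $\partial L_{\mathcal U}(0;g)\neq\emptyset$. The lower bound from the convexity step gives $g_{\mathcal U}\in\partial L_{\mathcal U}(0;g)$.

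Conversely, take $s\in\partial L_{\mathcal U}(0;g)$. Then
\[
f(x+u+v)-\langle g,v\rangle\ge f(x)+\langle s,u\rangle \quad\text{for all } u,v .
\]
Rearranging, $s+P_{\mathcal V(x)}g\in\partial f(x)$. Projecting this subgradient onto $\mathcal U(x)$ and using the projection formula gives $s=g_{\mathcal U}$. So the subdifferential is the singleton $\{g_{\mathcal U}\}$, and $\nabla L_{\mathcal U}(0;g)=g_{\mathcal U}$.

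\textbf{Uniqueness of the minimizer.} Suppose $g\in\operatorname{ri}\partial f(x)$ and $v\in W(0;g)$. Then $f(x+v)=f(x)+\langle g,v\rangle$. Because $g\in\operatorname{ri}\partial f(x)$ and $v\in\mathcal V(x)$ is parallel to $\operatorname{aff}\partial f(x)$, the subgradient $g'=g+tv$ lies in $\partial f(x)$ for small $t>0$. The subgradient inequality for $g'$ gives
\[
f(x+v)\ge f(x)+\langle g,v\rangle+t\|v\|^2 ,
\]
so $v=0$ and $W(0;g)=\{0\}$.
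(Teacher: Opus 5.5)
Your proof is correct and follows essentially the same route as the paper's. You use the same projection argument onto $g+\mathcal V(x)$ and the same subgradient lower bound for convexity and for $L_{\mathcal U}(0;g)=f(x)$. Finiteness comes from the same separation argument on $P_{\mathcal U(x)}(\operatorname{dom}f-x)$, where the separating vector is a normal to $\operatorname{dom}f$ at $x$ and therefore lies in $\mathcal V(x)$. Differentiability follows from the same singleton-subdifferential argument. The tangent-cone and polar detour is unnecessary, and your uniqueness step using $g+tv$ is only a cosmetic variant of the paper's uniform unit-direction perturbation.
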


\section{Main Results: \textit{Metric} Perspective}

\label{sec:slope-eb}

In this section, error bounds are formulated in terms of the {\it slope} $|\nabla f|$, as introduced in Definitions~\ref{def:slope-eb}--\ref{def:slope-eb-m} below. Using the linear growth property (Lemma~\ref{lem:linear-growth}), we prove that the error-bound property on the identifiable manifold $\mathcal{M}$ is equivalent to the corresponding property in the ambient space $\mathbb{R}^n$.
A similar conclusion for the KL property on a general metric space was also claimed in \cite{lewis2024identifiability}. 
The argument in \cite[Theorem 5.12]{lewis2024identifiability} invokes the linear growth property in \cite[Theorem 4.2]{lewis2024identifiability}. 
However, that property is established only at $\bar x$. If $x \neq \bar x$ is near $\bar x$, Ekeland's variational principle yields the existence of $v_r \in \mathcal{M}$, but it does not ensure that $v_r \to_{f} x$.
A counterexample is provided in the Appendix~\ref{sec:counterexample}.

We further show that, under $C^1$ partial smoothness, linear growth, sharpness, and the equivalence remain valid.
Thus, $C^2$ regularity of $\mathcal M$ and $f|_{\mathcal M}$ is not essential for the EB equivalence.

For the rest of this section, we suppose that $f: \mathbb{R}^n \to (-\infty,+\infty]$ is proper and closed. The following definitions formulate the local error-bound property in the ambient space $\mathbb{R}^n$ and on the manifold $\mathcal{M}$, respectively, in terms of the slope.

\begin{definition}[Local Error Bound (EB) on $(\mathbb{R}^n,d)$]
    \label{def:slope-eb}
    Suppose that the solution set $S := \arg\min f$ is nonempty.
    We say that $f$ satisfies the error bound (EB) around $\bar x \in S$ on $(\mathbb{R}^n,d)$ if there exist constants $\epsilon,\nu,\mu > 0$ such that for all $x \in B_{\epsilon}(\bar x) \cap [f\le f(\bar x)+\nu]$,
        \begin{equation*}
            \mu\operatorname{dist}(x, S) \leq |\nabla f|(x).
        \end{equation*}
\end{definition}

\begin{definition}[Local Error Bound (EB) on $(\mathcal{M},d)$]
    \label{def:slope-eb-m}
   Suppose that $\mathcal M$ is a smooth manifold around $\bar x$, that $S:=\arg\min f$ is nonempty, and that $\bar x\in\mathcal M\cap S$.
    We say that $f$ satisfies the error bound (EB) around $\bar x \in S$ on $(\mathcal{M},d)$ if there exist constants $\epsilon,\mu > 0$ such that for all $x \in B_{\epsilon}(\bar x) \cap \mathcal{M}$,
        \begin{equation*}
            \mu\operatorname{dist}(x, S \cap \mathcal{M}) \leq |\nabla (f|_{\mathcal{M}})|(x).
        \end{equation*}
\end{definition}

\subsection{Equivalence under identifiability}
\begin{theorem}
    \label{thm:eb-equiv-metric}
    Suppose that $f$ is proper and closed, that $\bar x\in\arg\min f$, and that $\mathcal M$ is identifiable at $\bar x$ for $0\in\hat\partial f(\bar x)$. Then the local error-bound property in Definition~\ref{def:slope-eb} is equivalent to that in Definition~\ref{def:slope-eb-m} at $\bar x$.
\end{theorem}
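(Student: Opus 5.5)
We establish the two implications separately, using three ingredients: the sharpness part of Definition~\ref{def:identifiable}, the linear growth of Lemma~\ref{lem:linear-growth}, and the slope--gradient identity of Lemma~\ref{lem:slope-grad}. The preliminary step is to localize $S$. We claim that after shrinking $\epsilon$ we have $S\cap B_\epsilon(\bar x)\subset\mathcal M$. Indeed, any $x\in S$ satisfies $0\in\hat\partial f(x)$ and $f(x)=f(\bar x)$. If there were points $x_k\in S\setminus\mathcal M$ with $x_k\to\bar x$, then $x_k\to_f\bar x$ with zero subgradients, which contradicts sharpness. The same reasoning yields a constant $c>0$ with $|\nabla f|(x)\ge \overline{|\nabla f|}(x)=\operatorname{dist}(0,\partial f(x))\ge c$ for all $x\notin\mathcal M$ near $\bar x$ with $f(x)\le f(\bar x)+\nu$. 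Here $f(x)\ge f(\bar x)$ because $\bar x$ is a global minimizer, so these points satisfy $x\to_f\bar x$ once $\epsilon,\nu$ are small. Since $\operatorname{dist}(x,S)\le\|x-\bar x\|<\epsilon$, we conclude that $\mu\operatorname{dist}(x,S)\le|\nabla f|(x)$ holds automatically off $\mathcal M$ whenever $\mu\epsilon\le c$. Consequently, only points of $\mathcal M$ need attention, and near $\bar x$ we have $\operatorname{dist}(x,S)=\operatorname{dist}(x,S\cap\mathcal M)$ up to shrinking. This uses that the nearest points of $S$ to $x\in B_{\epsilon/2}(\bar x)$ lie in $B_\epsilon(\bar x)$, and $S$ is closed since $f$ is closed.

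\emph{Ambient $\Rightarrow$ manifold.} Fix $x\in\mathcal M$ near $\bar x$. By continuity of $f|_{\mathcal M}$, the value $f(x)$ is close to $f(\bar x)$, so $x$ lies in the admissible set of Definition~\ref{def:slope-eb}. We then need the comparison $|\nabla f|(x)\le C\,|\nabla(f|_{\mathcal M})|(x)$. We obtain it from linear growth applied at base points $x\in\mathcal M$ near $\bar x$: $f(z)\ge f(P_{\mathcal M}z)+\delta\|z-P_{\mathcal M}z\|$. Write $z$ near $x$ and set $w=P_{\mathcal M}z$. Then
\[
f(x)-f(z)\le f(x)-f(w)-\delta\|z-w\|.
\]
The $C^1$ smoothness of $f|_{\mathcal M}$ gives $f(x)-f(w)\le(\|\nabla_{\mathcal M}f(x)\|+o(1))\|x-w\|$, and $\|x-w\|\le\|x-z\|+\|z-w\|\le 2\|x-z\|$. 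Dividing by $\|x-z\|$ and taking the limsup yields $|\nabla f|(x)\le 2\|\nabla_{\mathcal M}f(x)\|$, where the degenerate case $|\nabla f|(x)=0$ is trivial. Combining this with Lemma~\ref{lem:slope-grad} and the identity for distances to $S$ gives the manifold EB with constant $\mu/2$.

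\emph{Manifold $\Rightarrow$ ambient.} Points off $\mathcal M$ were handled in the preliminary step. For $x\in\mathcal M$, the definition of slope as a limsup shows directly that $|\nabla f|(x)\ge|\nabla(f|_{\mathcal M})|(x)$: restricting $z$ to $\mathcal M$ can only shrink the limsup. If $x$ is a local minimizer of $f$, then $x$ is also a local minimizer of $f|_{\mathcal M}$, and both slopes vanish. Hence $\mu\operatorname{dist}(x,S)=\mu\operatorname{dist}(x,S\cap\mathcal M)\le|\nabla f|(x)$. Taking $\mu$ as the minimum of the two constants finishes this direction.

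\emph{Main obstacle.} The delicate point is Lemma~\ref{lem:linear-growth}, which is stated only with base point $\bar x$ but holds uniformly on a ball $B_\epsilon(\bar x)$. That uniformity is exactly what the ambient $\Rightarrow$ manifold estimate needs at nearby $x\in\mathcal M$. I would check that $P_{\mathcal M}z$ remains in the region where the inequality is valid, by shrinking $\epsilon$ so that $z\in B_\epsilon(\bar x)$ whenever $x\in B_{\epsilon/2}(\bar x)$ and $z$ is close to $x$. I would also confirm that no $f\to_f$ issue arises, since the lemma is a pointwise inequality on the whole ball rather than a limit statement at $\bar x$. This is precisely the point the authors flag as the gap in the earlier KL argument.
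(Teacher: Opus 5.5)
Your proof is correct. The preliminary step (sharpness forces $S\cap B_\epsilon(\bar x)\subset\mathcal M$, giving the off-manifold bound $|\nabla f|\ge c$ and $\operatorname{dist}(x,S)=\operatorname{dist}(x,S\cap\mathcal M)$) matches the paper's, and so does the manifold $\Rightarrow$ ambient direction.

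The ambient $\Rightarrow$ manifold direction is argued differently. The paper fixes $x\in\mathcal M$ with $\operatorname{dist}(x,S)>0$ and takes $x_r\to x$ nearly realizing the ambient slope bound $\frac{\mu}{2}\operatorname{dist}(x,S)$. It sets $v_r=P_{\mathcal M}(x_r)$ and uses the strict growth term $\delta\,d(x_r,v_r)$, together with the normalization $\operatorname{dist}(x,S)\le 1$, to absorb the offset in $d(x,v_r)\le d(x,x_r)+d(x_r,v_r)$. This yields the constant $\min\{\mu/2,\delta\}$.

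You instead prove a pointwise comparison $|\nabla(f|_{\mathcal M})|(x)\le|\nabla f|(x)\le 2|\nabla(f|_{\mathcal M})|(x)$ on $\mathcal M$ near $\bar x$, independent of EB. It uses only $f(z)\ge f(P_{\mathcal M}z)$ (the constant $\delta$ is never needed) and the nearest-point inequality $\|z-P_{\mathcal M}z\|\le\|z-x\|$. This gives the cleaner constant $\mu/2$, with no case split on $\operatorname{dist}(x,S)$ and no normalization. Your appeal to differentiability of $f|_{\mathcal M}$ is harmless but avoidable: for $f(z)<f(x)$ you have $f(w)\le f(z)<f(x)$, and comparing $(f(x)-f(z))/\|x-z\|$ with $2(f(x)-f(w))/\|x-w\|$ gives the bound purely metrically.

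What the paper's version buys is that it never uses $d(x_r,v_r)\le d(x_r,x)$. It only needs some $v_r\in\mathcal M$ with $v_r\to x$ and $f(x_r)\ge f(v_r)+\delta\,d(x_r,v_r)$. That is why it carries over verbatim to Theorem~\ref{thm:eb-equiv-nonconvex}, where $R_{\bar x}$ is not a nearest-point map. Your argument would need an extra estimate there, such as $\|x_r-R_{\bar x}(x_r)\|\le C\|x_r-x\|$ from the Lipschitz graph map.

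Finally, your stated ``main obstacle'' is not one. Lemma~\ref{lem:linear-growth} is already a pointwise inequality at every point of $B_\epsilon(\bar x)$, so applying it at $z$ near $x\in B_{\epsilon/2}(\bar x)$ is immediate.
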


\begin{proof}
We first show that, by the sharpness condition in Definition~\ref{def:identifiable}, there exists $\eta > 0$ such that $|\nabla f|(x) > \eta$ for all $x \notin \mathcal{M}$ near $\bar x$ and $f(x)$ near $f(\bar x)$.
Assume to the contrary that no such neighborhood exists, i.e., there exists a sequence $x_k \to_f \bar x$ with $x_k \notin \mathcal{M}$ such that $|\nabla f|(x_k) \to 0$. 
By definition of the limiting slope, $\overline{|\nabla f|}(x_k) \leq |\nabla f|(x_k)$, hence $\overline{|\nabla f|}(x_k) \to 0$. 
Proposition~\ref{prop:limiting-slope} then gives $\operatorname{dist}(0, \partial f(x_k)) = \overline{|\nabla f|}(x_k) \to 0$. 
This contradicts the sharpness condition in Definition~\ref{def:identifiable}.

First, we prove that EB on $(\mathcal{M},d)$ implies EB on $(\mathbb{R}^n,d)$. In the following, we denote $S = \arg\min f \subseteq \left\{x \mid |\nabla f|(x) = 0\right\}$.
Choose $\nu$ small enough. Fix any $x \in B_\epsilon(\bar x) \cap [f\le f(\bar x)+\nu]$.
If $x \in \mathcal{M}$, then by the definition of EB on $(\mathcal{M},d)$, we have
\begin{equation*}
    \mu \operatorname{dist}(x, S) \leq \mu \operatorname{dist}(x, S \cap \mathcal{M}) \leq |\nabla (f|_{\mathcal{M}})|(x) \leq |\nabla f|(x)
\end{equation*}
On the other hand, if $x \notin \mathcal{M}$, then the sharpness condition gives $|\nabla f|(x) > \eta$.
After shrinking $\epsilon$ if necessary, we have $\mu\operatorname{dist}(x, \bar x) \leq \eta$, and thus
\begin{equation*}
    \mu \operatorname{dist}(x, S) \leq \mu \operatorname{dist}(x, \bar x) \leq |\nabla f|(x).
\end{equation*}

Next, we show that EB on $(\mathbb{R}^n,d)$ implies EB on $(\mathcal{M},d)$.
Suppose that EB on $(\mathbb{R}^n,d)$ holds with constants $\epsilon,\nu,\mu>0$.
Since $f|_{\mathcal{M}}$ is continuous around $\bar x$, shrink $\epsilon$ if necessary so that $d(x,S)\leq 1$ for all $x\in B_\epsilon(\bar x)$, and $B_\epsilon(\bar x)\cap\mathcal{M}\subseteq [f\le f(\bar x)+\nu]$.
Fix any $x \in \mathcal{M} \cap B_{\epsilon}(\bar x)$.
We first show that
\begin{equation*}
    \operatorname{dist}(x, S) = \operatorname{dist}(x, S \cap \mathcal{M}).
\end{equation*}
Since $\operatorname{dist}(x, S) \leq \operatorname{dist}(x, S \cap \mathcal{M})$ always holds, it suffices to rule out the strict inequality $\operatorname{dist}(x, S) < \operatorname{dist}(x, S \cap \mathcal{M})$.
If this strict inequality holds, then there exists a point $y \in S $ such that $d(x,y) < \operatorname{dist}(x, S \cap \mathcal{M})\leq d(x,\bar x)$.
This implies $y \notin \mathcal{M}$ and $d(y,\bar x)\le d(y,x)+d(x,\bar x)<2d(x,\bar x)<2\epsilon$.
Since $y \in S $, we have $|\nabla f|(y) = 0$ and $f(y) = f(\bar x)$. But the off-manifold lower bound and $y \notin \mathcal{M}$ imply that $|\nabla f|(y) > \eta$, which is a contradiction.

If $\operatorname{dist}(x,S)=0$, then the distance equality above gives $\operatorname{dist}(x,S\cap\mathcal{M})=0$, and the desired EB inequality on $(\mathcal{M},d)$ is trivial.
Thus, we may assume that $\operatorname{dist}(x,S)>0$.
By the EB on $(\mathbb{R}^n,d)$, $|\nabla f|(x)\geq \mu\operatorname{dist}(x,S)>0$.
Hence, by the definition of slope, there exists a sequence $x_r \to x$ with $x_r \ne x$ such that
\begin{equation*}
    \frac{\mu}{2}\operatorname{dist}(x, S) \leq \frac{f(x)-f(x_r)}{d(x,x_r)}
\Longleftrightarrow    \frac{\mu}{2}\operatorname{dist}(x, S)d(x,x_r) \leq f(x)-f(x_r).
\end{equation*}
By Lemma~\ref{lem:linear-growth}, for all sufficiently large $r$, with $v_r = P_{\mathcal{M}}(x_r)$, we have
\begin{equation*}
    f(x_r) \geq f(v_r) + \delta d(x_r, v_r).
\end{equation*}
Then
\begin{equation*}
    f(x) \geq \frac{\mu}{2}\operatorname{dist}(x, S)d(x,x_r) + f(v_r) + \delta d(x_r, v_r).
\end{equation*}
Since $d(x,S) \leq 1$, we obtain
\begin{equation*}
    f(x) - f(v_r)  \geq  \left(\frac{\mu}{2} d(x,x_r) + \delta d(x_r, v_r)\right) \operatorname{dist}(x, S).
\end{equation*}
Since $x_r \ne x$ and $\operatorname{dist}(x,S)>0$, the right-hand side is strictly positive. Hence $f(v_r)<f(x)$, which implies $v_r \ne x$ for all sufficiently large $r$, so division by $d(x,v_r)$ is valid.
Let $\gamma = \min\{\frac{\mu}{2}, \delta\}$. By the triangle inequality,
\begin{equation*}
    ( f(x) - f(v_r) ) / d(x,v_r) \geq \gamma \operatorname{dist}(x, S).
\end{equation*}
Also note that $d(x,v_r) \leq d(x,x_r) + d(x_r,v_r) \leq 2d(x,x_r) \to 0$. Hence,
\begin{equation*}
    \gamma \operatorname{dist}(x, S) \leq |\nabla (f|_{\mathcal{M}})|(x).
\end{equation*}
Using $\operatorname{dist}(x,S)=\operatorname{dist}(x,S\cap\mathcal{M})$, we conclude that EB holds on $(\mathcal{M},d)$ around $\bar x$.
\end{proof}

\begin{remark}
    It may seem counterintuitive, but the implication ``EB on $(\mathcal{M},d)$ $\Rightarrow$ EB on $(\mathbb{R}^n,d)$'' is the easier direction. It follows because the sharpness condition in Definition~\ref{def:identifiable} ensures that $|\nabla f|(x)$ is bounded away from zero for all $x$ near $\bar x$ outside $\mathcal{M}$, which is sufficient to guarantee EB on $(\mathbb{R}^n,d)$.
\end{remark}

\subsection{Equivalence under $C^1$ partial smoothness}

In \cite[Theorem 6.1]{lewis2002active}, for a fixed $u \in T_x\mathcal{M}$, the function $w\mapsto f(x+u+w) - \langle g, w \rangle$ has a sharp local minimizer at $w=v(u)$ ($v(u)$ is from Lemma~\ref{lem:local-graph}) on $N_x\mathcal{M}$ under the assumption of $C^\infty$ partial smoothness (the definition of partial smoothness in \cite[Definition 2.7]{lewis2002active} is slightly different from the one in Definition~\ref{def:partial-smoothness}) and nondegeneracy $g \in \operatorname{ri} \partial f(x)$. 
Equivalently, the function $q_u(w) = f(x+u+w) - \langle g, w \rangle$ has linear growth away from the minimizer $w=v(u)$ on $N_x\mathcal{M}$.
Inspired by this observation, we show in Theorem~\ref{thm:linear-growth} that the linear growth property holds uniformly for all $u$ near $0$ under $C^1$ partial smoothness and nondegeneracy $0 \in \operatorname{ri} \hat \partial f(x)$.
Therefore, compared with Lemma~\ref{lem:linear-growth}, Theorem~\ref{thm:linear-growth} provides a similar linear growth property under weaker assumptions.
\begin{theorem}[Linear growth under $C^1$ partial smoothness]
    \label{thm:linear-growth}
    Let $f:\mathbb{R}^n\to(-\infty,+\infty]$ be proper and closed. Suppose that $f$ is $C^1$-partly smooth at $x$ for $0$ relative to $\mathcal{M}$ in the sense of Definition~\ref{def:partial-smoothness}, and $0\in\operatorname{ri}\hat\partial  f(x)$.
    Set
    \[
        U=T_x\mathcal{M},
        \qquad
        V=N_x\mathcal{M}.
    \]
    Then, after possibly shrinking neighborhoods of $0$ in $U$ and $V$, there is a $C^1$ map $v:U_0\to V$ such that $v(0)=0$, $\nabla v(0)=0$, and
    \[
        x+u+w\in\mathcal{M}
        \quad\Longleftrightarrow\quad
        w=v(u)
    \]
    for all $u\in U_0$ and all sufficiently small $w\in V$. Moreover, there exist constants $\kappa>0$ and $\epsilon>0$ such that
    \begin{equation}
        \label{eq:uniform-normal-linear-growth}
        f(x+u+w)
        \geq
        f(x+u+v(u))+\kappa\|w-v(u)\|
    \end{equation}
    for all $u\in U_0$ and all $w\in B_\epsilon(0)\cap V$.

    Consequently, every point $y$ sufficiently close to $x$ can be written uniquely as $y=x+u+w$ with $u\in U$ and $w\in V$, and there exists a point 
    \[
        R_x(y):=x+u+v(u)\in\mathcal{M}
    \]
    that satisfies
    \[
        f(y)\geq f(R_x(y))+\kappa\|y-R_x(y)\|
        \geq f(R_x(y))+\kappa\operatorname{dist}(y,\mathcal{M}).
    \]
\end{theorem}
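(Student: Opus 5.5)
The plan is to obtain the map $v$ directly from Lemma~\ref{lem:local-graph} and then prove the uniform linear growth \eqref{eq:uniform-normal-linear-growth} by contradiction. The contradiction will combine three ingredients: the prox-regular lower model of $f$, inner semicontinuity to produce subgradients at nearby manifold points, and nondegeneracy to point those subgradients in the right normal direction.

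\emph{Step 1 (the map $v$).} Since $\mathcal M$ is $C^1$, Lemma~\ref{lem:local-graph} with $p=1$ gives a $C^1$ map $h$ on $U_0=U\cap B_\rho(0)$ with $h(0)=0$ and $\nabla h(0)=0$. Its graph $\{x+u+h(u)\}$ coincides with $\mathcal M$ near $x$. I set $v:=h$. Then $x+u+w\in\mathcal M$ with $u\in U_0$ and $w\in V$ small holds exactly when $w=v(u)$. The unique decomposition $y=x+u+w$ is just the orthogonal splitting $\mathbb R^n=U\oplus V$.

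\emph{Step 2 (a normal ball of subgradients).} By the sharpness clause, $\operatorname{par}\hat\partial f(x)=V$. Since $0\in\hat\partial f(x)$, this gives $\operatorname{aff}\hat\partial f(x)=V$. The assumption $0\in\operatorname{ri}\hat\partial f(x)$ then yields $r>0$ with $B_r(0)\cap V\subset\hat\partial f(x)\subset\partial f(x)$. Let $\epsilon_0,\rho_0$ be the prox-regularity constants at $x$ for $\bar y=0$, and shrink $r$ so that $r<\epsilon_0/2$.

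\emph{Step 3 (contradiction argument; main step).} Suppose \eqref{eq:uniform-normal-linear-growth} fails for every $\kappa,\epsilon$. Then there are $u_k\to0$ and $w_k\to0$ in $V$ with $w_k\ne v(u_k)$ and
\[
f(x+u_k+w_k)<f(z_k)+\tfrac1k\|w_k-v(u_k)\|,\qquad z_k:=x+u_k+v(u_k)\in\mathcal M .
\]
In particular $f(x+u_k+w_k)<\infty$. Set $d_k=(w_k-v(u_k))/\|w_k-v(u_k)\|\in V$, and pass to a subsequence with $d_k\to d\in V$, $\|d\|=1$. Then $rd\in\partial f(x)$ is near $0$. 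Inner semicontinuity along $z_k\to x$ in $\mathcal M$ gives $y_k\in\partial f(z_k)$ with $y_k\to rd$, so $\|y_k\|<\epsilon_0$ eventually. Continuity of $f|_{\mathcal M}$ gives $f(z_k)\to f(x)$, so $f(z_k)<f(x)+\epsilon_0$. Prox-regularity at $x$ for $0$, applied at $z_k$ with $y_k$ and $x'=x+u_k+w_k$, gives
\[
f(x+u_k+w_k)\ge f(z_k)+\langle y_k,d_k\rangle\|w_k-v(u_k)\|-\tfrac{\rho_0}{2}\|w_k-v(u_k)\|^2 .
\]
Here $\langle y_k,d_k\rangle\to r$ and $\|w_k-v(u_k)\|\to0$, so for large $k$ the right-hand side exceeds $f(z_k)+\tfrac r2\|w_k-v(u_k)\|$. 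This contradicts the assumed inequality once $1/k<r/2$.

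The delicate point is exactly this step. The chosen subgradient must depend on the bad direction $d_k$, while inner semicontinuity is only a sequential statement for a fixed target $y$. Passing to a convergent subsequence of directions, and using that $rd$ lies in $\partial f(x)$ uniformly on the unit sphere of $V$, is what makes the argument work. This is also where $C^1$ suffices, since no curvature of $\mathcal M$ is used.

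\emph{Step 4 (consequence).} For $y=x+u+w$ close to $x$, the point $R_x(y)=x+u+v(u)\in\mathcal M$ satisfies $y-R_x(y)=w-v(u)$. Then \eqref{eq:uniform-normal-linear-growth} gives $f(y)\ge f(R_x(y))+\kappa\|y-R_x(y)\|$. Finally, $\|y-R_x(y)\|\ge\operatorname{dist}(y,\mathcal M)$ because $R_x(y)\in\mathcal M$.
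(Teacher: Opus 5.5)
Your proof is correct and takes essentially the paper's route: the graph map from Lemma~\ref{lem:local-graph}, a normal ball $B_r(0)\cap V\subseteq\partial f(x)$ from sharpness plus nondegeneracy, compactness of the unit sphere in $V$ with inner semicontinuity to get subgradients at nearby manifold points aligned with the bad normal direction, and the prox-regular lower model. The only difference is packaging: the paper first proves by contradiction a uniform claim ($s_{y,d}\in\partial f(y)$ with $\|s_{y,d}-\alpha d\|\le\alpha/4$ for all $y\in\mathcal{M}$ near $x$ and all unit $d\in V$) and then estimates directly, giving the explicit constant $\kappa=\alpha/2$, whereas you run a single contradiction on the growth inequality itself.
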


\begin{proof}
    The graph representation of $\mathcal{M}$ follows from Lemma~\ref{lem:local-graph}.
    If $V=\{0\}$, then $w=v(u)=0$ in the local representation and the conclusion is immediate. Hence suppose $V\neq\{0\}$.

    Since $0\in\operatorname{ri} \hat \partial f(x)$ and $\operatorname{par}\hat\partial f(x)=V$, there exists $\delta>0$ such that
    \[
        B_\delta(0)\cap V\subseteq \hat \partial f(x) \subseteq \partial f(x)
    \]
    By prox-regularity of $f$ at $x$ for $0$, there exist $\tau>0$ and $\rho\geq0$ such that
    \begin{equation}
        \label{eq:prox-regular-linear-growth}
        f(z)\geq f(y)+\langle s,z-y\rangle-\frac{\rho}{2}\|z-y\|^2
    \end{equation}
    whenever $y,z\in B_\tau(x)$, $f(y)<f(x)+\tau$, and $s\in\partial f(y)$ satisfies $\|s\|<\tau$.
    Choose $\alpha\in(0,\delta)$ small enough so that $2\alpha<\tau$.

    Let $\mathbb{S}^{n-1}=\{d \in \mathbb{R}^n : \|d\|=1\}$.
    We claim that, after shrinking the neighborhood of $x$ in $\mathcal{M}$ if necessary, for every $y\in\mathcal{M}$ near $x$ and every $d\in \mathbb{S}^{n-1} \cap V$, there exists $s_{y,d}\in\partial f(y)$ such that
    \begin{equation}
        \label{eq:uniform-isc-linear-growth}
        \|s_{y,d}-\alpha d\|\leq \frac{\alpha}{4}.
    \end{equation}
    Indeed, if this failed, then there would exist sequences $y_k\to x$ in $\mathcal{M}$ and $d_k\in \mathbb{S}^{n-1} \cap V$ such that no point of $\partial f(y_k)$ lies within distance $\alpha/4$ of $\alpha d_k$. 
    Passing to a subsequence, $d_k\to d\in \mathbb{S}^{n-1} \cap V$. Since $\alpha d\in\partial f(x)$ and $\alpha d$ is near $0$, the inner semicontinuity condition gives points $s_k\in\partial f(y_k)$ with $s_k\to\alpha d$. 
    This contradicts $\alpha d_k\to\alpha d$.

    Shrink $U_0$ so that $y:=x+u+v(u)$ belongs to the above neighborhood of $x$ in $\mathcal{M}$ and satisfies $f(y)<f(x)+\tau$ for all $u\in U_0$. 
    Choose $\epsilon>0$ small enough and shrink $U_0$ if necessary so that $x+u+w\in B_\tau(x)$ and $y \in B_\tau(x)$ whenever $u\in U_0$ and $w\in B_\epsilon(0)\cap V$, and so that $\|v(u)\|<\epsilon$ for all $u\in U_0$. If $\rho>0$, also require $2\epsilon\leq\alpha/(2\rho)$.

    Fix $u\in U_0$ and $w\in B_\epsilon(0)\cap V$ with $w\neq v(u)$, and set
    \[
        d=\frac{w-v(u)}{\|w-v(u)\|}\in \mathbb{S}^{n-1} \cap V, \quad y=x+u+v(u)\in\mathcal{M}.
    \]
    Choose $s=s_{y,d}$ from \eqref{eq:uniform-isc-linear-growth}. Then $\|s\| \leq \|s-\alpha d\| +\|\alpha d\|<2\alpha<\tau$, so \eqref{eq:prox-regular-linear-growth} applies with $z=x+u+w$ and $y$. Moreover,
    \begin{equation*}
        \langle s,d\rangle = \langle s-\alpha d,d\rangle+\alpha \geq \alpha-\|s-\alpha d\| \geq \frac{3\alpha}{4}
        \Rightarrow
        \langle s,w-v(u)\rangle = \langle s,d\rangle\|w-v(u)\| \geq \frac{3\alpha}{4}\|w-v(u)\|.
    \end{equation*}
    Therefore,
    \begin{align*}
        f(x+u+w)-f(x+u+v(u))
        &\geq
        \langle s,w-v(u)\rangle-\frac{\rho}{2}\|w-v(u)\|^2 \\
        &\geq
        \frac{3\alpha}{4}\|w-v(u)\|
        -\frac{\rho}{2}\|w-v(u)\|^2.
    \end{align*}
    Since $\|w-v(u)\|\leq 2\epsilon$, if $\rho > 0$ the choice of $\epsilon$ yields
    \[
        f(x+u+w)-f(x+u+v(u))
        \geq
        \frac{\alpha}{2}\|w-v(u)\|.
    \]
    Thus \eqref{eq:uniform-normal-linear-growth} holds with $\kappa=\alpha/2$. The case $w=v(u)$ is trivial. Finally, every nearby point has a unique decomposition in the fixed orthogonal splitting $\mathbb{R}^n=U\oplus V$, and the last assertion follows because $y-R_x(y)=w-v(u)$ and $\operatorname{dist}(y,\mathcal{M})\leq\|y-R_x(y)\|$.
\end{proof}

\begin{lemma}[Sharpness under $C^1$ partial smoothness]
    \label{lem:sharpness-under-C1-partly-smoothness}
    Suppose that $f$ is $C^1$-partly smooth at $\bar x$ for $0$ relative to $\mathcal{M}$ in the sense of Definition~\ref{def:partial-smoothness} with $0\in\operatorname{ri}\hat \partial f(\bar x)$.
    Then,
    \begin{equation*}
        \liminf_{x \rightarrow_f \bar x, x \notin \mathcal{M}, y \in \partial f(x)} \|y\| > 0.
    \end{equation*}
\end{lemma}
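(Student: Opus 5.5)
The plan is to argue by contradiction, combining the uniform linear growth of Theorem~\ref{thm:linear-growth} with the prox-regularity inequality at $\bar x$ for $0$. Suppose the $\liminf$ equals $0$. Then there are sequences $x_k\to_f\bar x$ with $x_k\notin\mathcal{M}$ and $y_k\in\partial f(x_k)$ with $\|y_k\|\to 0$.

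First, I apply Theorem~\ref{thm:linear-growth} with base point $x=\bar x$; its hypotheses are exactly those of the lemma. For all large $k$, $x_k$ lies in the neighborhood where the decomposition $x_k=\bar x+u_k+w_k$ is defined, so $z_k:=R_{\bar x}(x_k)=\bar x+u_k+v(u_k)\in\mathcal{M}$ exists and satisfies
\[
f(x_k)\ge f(z_k)+\kappa\, d_k,\qquad d_k:=\|x_k-z_k\| .
\]
Since $x_k\notin\mathcal{M}$ and $z_k\in\mathcal{M}$, we have $d_k>0$. Since $u_k\to 0$, $w_k\to 0$, and $v$ is continuous with $v(0)=0$, we get $z_k\to\bar x$ and $d_k\to 0$.

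Second, I invoke prox-regularity of $f$ at $\bar x$ for $\bar y=0$, with constants $\tau,\rho$. For large $k$ the hypotheses hold: $x_k,z_k\in B_\tau(\bar x)$, $f(x_k)<f(\bar x)+\tau$ because $x_k\to_f\bar x$, and $\|y_k\|<\tau$. Applying the inequality with base point $x_k$, subgradient $y_k$ and test point $z_k$ gives
\[
f(z_k)\ge f(x_k)+\langle y_k,z_k-x_k\rangle-\tfrac{\rho}{2}d_k^2\ge f(x_k)-\|y_k\|d_k-\tfrac{\rho}{2}d_k^2 .
\]

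Third, I add the two inequalities; $f(z_k)$ is finite since $f(z_k)\le f(x_k)<\infty$. This yields $\kappa d_k\le \|y_k\|d_k+\tfrac{\rho}{2}d_k^2$. Dividing by $d_k>0$ gives $\kappa\le\|y_k\|+\tfrac{\rho}{2}d_k\to 0$, which contradicts $\kappa>0$. Hence the $\liminf$ is positive.

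The only delicate step is the bookkeeping of neighborhoods: all of the following must hold simultaneously for large $k$.
\begin{itemize}
\item $x_k$ lies in the region where the unique splitting and the growth estimate \eqref{eq:uniform-normal-linear-growth} apply, i.e.\ $u_k\in U_0$ and $w_k\in B_\epsilon(0)\cap V$.
\item Both $x_k$ and $z_k$ lie in the prox-regularity ball $B_\tau(\bar x)$.
\end{itemize}
Each follows from $x_k\to\bar x$ and continuity of $v$, so I expect no real obstruction. The substantive work, namely the uniform linear growth without $C^2$ regularity, is already done in Theorem~\ref{thm:linear-growth}.
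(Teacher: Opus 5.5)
Your proof is correct, but it takes a different route from the paper's. Both proofs use the same comparison point: the paper's $x_r'=\bar x+p_r+h(p_r)$ is exactly your $z_k=R_{\bar x}(x_k)$. You invoke Theorem~\ref{thm:linear-growth} as a black box and then need only one prox-regularity inequality, at the off-manifold point $x_k$ with the small subgradient $y_k$. The paper's proof does not use the linear-growth theorem at all. It passes to a subsequence so that the unit normals $(x_r-x_r')/\|x_r-x_r'\|$ converge to some $u\in N_{\bar x}\mathcal{M}$. It then uses nondegeneracy and inner semicontinuity to obtain $z_r\in\partial f(x_r')$ with $z_r\to\delta u$. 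Finally it adds two prox-regularity inequalities, one at each point, which is a hypomonotonicity argument giving $\rho\|x_r-x_r'\|\ge\langle z_r-y_r,u_r\rangle\to\delta>0$. Theorem~\ref{thm:linear-growth} is itself proved from exactly these ingredients, so the two arguments rest on the same mechanism; the difference is one of packaging. Your version is shorter and shows that the sharpness lemma is a corollary of uniform normal linear growth plus prox-regularity, so the two ingredients of Theorem~\ref{thm:eb-equiv-nonconvex} are not logically independent. There is no circularity, since the proof of Theorem~\ref{thm:linear-growth} does not use the lemma. The paper's version is self-contained and needs inner semicontinuity only along one sequence and one limiting direction, which avoids the uniform-in-direction compactness argument and constant bookkeeping of Theorem~\ref{thm:linear-growth}. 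Your neighborhood checks are sound. Since $u_k,w_k\to0$, $x_k$ lies in the domain of the growth estimate. Since $x_k\to_f\bar x$, $\|y_k\|\to0$ and $z_k\to\bar x$, the prox-regularity hypotheses hold. Finally, $f(z_k)\le f(x_k)<\infty$ makes the addition legitimate.
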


\begin{proof}
    Assume that the conclusion does not hold. Then there exists a sequence $x_r \to_f \bar x$ with $x_r \notin \mathcal{M}$ such that $y_r \in \partial f(x_r) \to 0 \in \partial f(\bar x)$ as $r \to +\infty$.
    Since $\mathcal{M}$ is $C^1$, use the local graph representation from Lemma~\ref{lem:local-graph}; write $\mathcal{M}$ around $\bar x$ as a $C^1$ graph over $T_{\bar x}\mathcal{M}$ with graph map $h:T_{\bar x}\mathcal{M}\cap B_\rho(0)\to N_{\bar x}\mathcal{M}$ satisfying $h(0)=0$ and $\nabla h(0)=0$.
    Decompose $x_r-\bar x=p_r+q_r$ with $p_r \in T_{\bar x}\mathcal{M}$ and $q_r \in N_{\bar x}\mathcal{M}$, and define
    \begin{equation*}
        x_r^{\prime} = \bar x + p_r + h(p_r) \in \mathcal{M},
    \end{equation*}
    Then $x_r^{\prime} \to \bar x$, $x_r \ne x_r^{\prime}$ for all large $r$, and
    \begin{equation*}
        u_r
        =
        \frac{x_r-x_r^{\prime}}{\|x_r-x_r^{\prime}\|}
        =
        \frac{q_r-h(p_r)}{\|q_r-h(p_r)\|}
        \in N_{\bar x}\mathcal{M}.
    \end{equation*}
    By taking a subsequence if necessary, we can assume that $u_r \to u \in N_{\bar x}\mathcal{M}$ with $\|u\| = 1$.
    Sharpness and nondegeneracy conditions imply that there exists $\delta > 0$ small enough such that $\delta u \in \hat \partial f(\bar x) \subseteq \partial f(\bar x)$ and $\delta u$ lies in the neighborhood of $0$ where the inner semicontinuity condition holds.
    Then, inner semicontinuity of $\partial f$ at $\bar x$ for $0$ on $\mathcal{M}$ implies that there exists $z_r \in \partial f(x_r^{\prime})$ such that $z_r \to \delta u$ as $r \to +\infty$.
    By prox-regularity of $f$ at $\bar x$ for $0$, shrink $\delta$ if necessary so that $z_r$ lies in the neighborhood of $0$ where the prox-regularity condition holds.
    Then, we have
    \begin{equation*}
        f(x_r) \geq f(x_r^{\prime}) + \langle z_r, x_r - x_r^{\prime} \rangle - \frac{\rho}{2} \| x_r - x_r^{\prime} \|^2,
    \end{equation*}
    and
    \begin{equation*}
        f(x_r^{\prime}) \geq f(x_r) + \langle y_r, x_r^{\prime} - x_r \rangle - \frac{\rho}{2} \| x_r - x_r^{\prime} \|^2.
    \end{equation*}
    Adding the above two inequalities gives
    \begin{equation*}
        0 \geq \langle z_r - y_r, x_r - x_r^{\prime} \rangle - \rho \| x_r - x_r^{\prime} \|^2.
    \end{equation*}
    Therefore, for each $r$, since $\| x_r - x_r^{\prime} \| > 0$, we have
    \begin{equation*}
        \rho \| x_r - x_r^{\prime} \| \geq \langle z_r - y_r, u_r \rangle.
    \end{equation*}
    Taking limits on both sides as $r \to +\infty$ gives
    \begin{equation*}
        0 \geq \langle \delta u - 0, u \rangle = \delta \|u\|^2 = \delta > 0,
    \end{equation*}
    which is a contradiction.
\end{proof}

\begin{remark}
    In the proof of Theorem~\ref{thm:eb-equiv-metric}, the main ingredients are the sharpness in Definition~\ref{def:identifiable} and the linear growth in Lemma~\ref{lem:linear-growth}.
    For the proof, it suffices to have the following linear growth property: we only need the existence of a corresponding point $v_r \in \mathcal{M}$ for each $x_r \ne x$ and $x_r \to x$, such that $f(x_r) \geq f(v_r) + \delta d(x_r, v_r)$ for some $\delta > 0$.
    Hence, Theorem~\ref{thm:linear-growth} provides such points $v_r$, while Lemma~\ref{lem:sharpness-under-C1-partly-smoothness} provides sharpness. Together, these results establish the equivalence between the local error-bound property on $(\mathbb{R}^n,d)$ and that on $(\mathcal{M},d)$.
    We summarize the discussion above in the following theorem.
\end{remark}

\begin{theorem}
    \label{thm:eb-equiv-nonconvex}
    Let $f$ be proper and closed, and let $\bar x \in \arg\min f$.
    Suppose that $f$ is $C^1$-partly smooth at $\bar x$ for $0$ relative to $\mathcal{M}$ in the sense of Definition~\ref{def:partial-smoothness} with $0\in\operatorname{ri} \hat \partial f(\bar x)$.
    Then the local error-bound property in Definition~\ref{def:slope-eb} is equivalent to that in Definition~\ref{def:slope-eb-m} at $\bar x$.
\end{theorem}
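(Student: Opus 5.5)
The plan is to repeat the proof of Theorem~\ref{thm:eb-equiv-metric} almost verbatim. We replace its two ingredients, the sharpness in Definition~\ref{def:identifiable} and the linear growth of Lemma~\ref{lem:linear-growth}, by Lemma~\ref{lem:sharpness-under-C1-partly-smoothness} and Theorem~\ref{thm:linear-growth}. The remark preceding the statement explains why this substitution is legitimate.

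\textbf{Step 1: off-manifold slope bound.} Using Lemma~\ref{lem:sharpness-under-C1-partly-smoothness}, we get $\eta>0$ with $|\nabla f|(x)>\eta$ for all $x\notin\mathcal{M}$ with $x\to_f\bar x$. The argument is the one at the start of the proof of Theorem~\ref{thm:eb-equiv-metric}. If a sequence $x_k\to_f\bar x$, $x_k\notin\mathcal{M}$, had $|\nabla f|(x_k)\to0$, then $\overline{|\nabla f|}(x_k)\le|\nabla f|(x_k)$. Proposition~\ref{prop:limiting-slope} then yields subgradients $y_k\in\partial f(x_k)$ with $y_k\to0$, contradicting the lemma.

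\textbf{Step 2: EB on $(\mathcal{M},d)$ implies EB on $(\mathbb{R}^n,d)$.} This direction uses only Step 1. On $\mathcal{M}$ we use $|\nabla(f|_{\mathcal{M}})|\le|\nabla f|$ and $\operatorname{dist}(x,S)\le\operatorname{dist}(x,S\cap\mathcal{M})$. Off $\mathcal{M}$ we use $\mu\|x-\bar x\|\le\eta<|\nabla f|(x)$ after shrinking $\epsilon$ and $\nu$.

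\textbf{Step 3: EB on $(\mathbb{R}^n,d)$ implies EB on $(\mathcal{M},d)$.}
\begin{itemize}
\item[(a)] By Step 1, every $y\in S$ near $\bar x$ lies in $\mathcal{M}$, since $|\nabla f|(y)=0$ and $f(y)=f(\bar x)$. Hence $\operatorname{dist}(x,S)=\operatorname{dist}(x,S\cap\mathcal{M})$ for $x\in\mathcal{M}$ near $\bar x$.
\item[(b)] Restricted smoothness gives continuity of $f|_{\mathcal{M}}$, so $B_\epsilon(\bar x)\cap\mathcal{M}\subseteq[f\le f(\bar x)+\nu]$ after shrinking.
\item[(c)] Fix $x\in\mathcal{M}$ with $\operatorname{dist}(x,S)>0$. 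Pick $x_r\to x$ with $f(x)-f(x_r)\ge\frac{\mu}{2}\operatorname{dist}(x,S)\,d(x,x_r)$.
\end{itemize}

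\textbf{Main obstacle: choosing the point $v_r$.} Lemma~\ref{lem:linear-growth} is unavailable here, since $\mathcal{M}$ is only $C^1$ and $P_{\mathcal{M}}$ may not be well defined. Instead we take $v_r:=R_{\bar x}(x_r)\in\mathcal{M}$ from Theorem~\ref{thm:linear-growth}, applied at the base point $\bar x$. Because the constants $\kappa,\epsilon$ there are uniform over $u\in U_0$, the bound $f(x_r)\ge f(v_r)+\kappa\|x_r-v_r\|$ holds for all $r$ large, since $x_r\to x$ stays in the neighborhood of $\bar x$.

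We also need $d(x,v_r)\le C\,d(x,x_r)$. Write $x=\bar x+u_0+v(u_0)$ (so $R_{\bar x}(x)=x$) and $x_r=\bar x+u_r+w_r$. Then
\[
\|x_r-v_r\|=\|w_r-v(u_r)\|\le\|w_r-v(u_0)\|+\|v(u_0)-v(u_r)\|\le(1+L)\|x_r-x\|,
\]
where $L$ is a local Lipschitz constant of the $C^1$ map $v$. Hence $d(x,v_r)\le(2+L)d(x,x_r)\to0$.

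\textbf{Conclusion.} Combining the two inequalities with $\operatorname{dist}(x,S)\le1$ gives
\[
f(x)-f(v_r)\ge\Bigl(\tfrac{\mu}{2}d(x,x_r)+\kappa\,d(x_r,v_r)\Bigr)\operatorname{dist}(x,S)>0 .
\]
In particular $v_r\ne x$. Dividing by $d(x,v_r)\le d(x,x_r)+d(x_r,v_r)$ yields
\[
\frac{f(x)-f(v_r)}{d(x,v_r)}\ge\min\{\mu/2,\kappa\}\operatorname{dist}(x,S).
\]
Letting $r\to\infty$ gives $|\nabla(f|_{\mathcal{M}})|(x)\ge\min\{\mu/2,\kappa\}\operatorname{dist}(x,S\cap\mathcal{M})$, using the distance equality from (a).
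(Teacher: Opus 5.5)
Your proposal is correct and follows the paper's proof essentially step for step. It uses the off-manifold slope bound from Lemma~\ref{lem:sharpness-under-C1-partly-smoothness}, the easy direction via that bound, the distance equality $\operatorname{dist}(x,S)=\operatorname{dist}(x,S\cap\mathcal{M})$, and the choice $v_r=R_{\bar x}(x_r)$ with the uniform linear growth of Theorem~\ref{thm:linear-growth}. The only cosmetic difference is how you show $v_r\to x$: you use the explicit bound $d(x,v_r)\le(2+L)\,d(x,x_r)$ from a local Lipschitz constant of $v$, whereas the paper invokes continuity of $R_{\bar x}$ together with $R_{\bar x}(x)=x$ on $\mathcal{M}$.
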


\begin{proof}
Set $S:=\arg\min f$. The proof follows that of Theorem~\ref{thm:eb-equiv-metric}, with the following modifications.

First, Lemma~\ref{lem:sharpness-under-C1-partly-smoothness} replaces the sharpness condition in Definition~\ref{def:identifiable}. It yields the same off-manifold slope bound used in the earlier proof: there exists $\eta>0$ such that $|\nabla f|(x)>\eta$ whenever $x\notin\mathcal{M}$ is sufficiently close to $\bar x$ and $f(x)$ is sufficiently close to $f(\bar x)$. This bound proves the implication from EB on $(\mathcal{M},d)$ to EB on $(\mathbb{R}^n,d)$ exactly as before. It also gives
$\operatorname{dist}(x,S)=\operatorname{dist}(x,S\cap\mathcal{M})$ 
for all $x\in\mathcal{M}$ sufficiently close to $\bar x$.

For the converse implication, Theorem~\ref{thm:linear-growth} replaces Lemma~\ref{lem:linear-growth}. Suppose that EB holds on $(\mathbb{R}^n,d)$. Shrink the neighborhood of $\bar x$ so that its intersection with $\mathcal{M}$ lies in the required sublevel set, $\operatorname{dist}(x,S)\leq1$, and the map $R_{\bar x}$ from Theorem~\ref{thm:linear-growth} is defined there. If $\operatorname{dist}(x,S)=0$, the desired on-manifold inequality follows immediately from the distance equality above. Hence fix $x\in\mathcal{M}$ in this neighborhood with $\operatorname{dist}(x,S)>0$, and choose a sequence $x_r\to x$, $x_r\ne x$, as in the proof of Theorem~\ref{thm:eb-equiv-metric}. Set $v_r=R_{\bar x}(x_r)\in\mathcal{M}$. For all sufficiently large $r$, Theorem~\ref{thm:linear-growth} gives
\begin{equation*}
    f(x_r)\geq f(v_r)+\kappa d(x_r,v_r).
\end{equation*}
Repeating the estimates in the earlier proof, with $\kappa$ in place of $\delta$, shows that $f(v_r)<f(x)$ and
\begin{equation*}
    \frac{f(x)-f(v_r)}{d(x,v_r)}
    \geq
    \gamma\operatorname{dist}(x,S),
    \qquad
    \gamma:=\min\left\{\frac{\mu}{2},\kappa\right\}.
\end{equation*}
The only additional point is to verify that $v_r\to x$, since $R_{\bar x}$ need not be the nearest-point projection. The local graph representation shows that $R_{\bar x}$ is continuous and $R_{\bar x} = \operatorname{Id}$ on $\mathcal{M}$ near $\bar x$. Hence
\begin{equation*}
    v_r=R_{\bar x}(x_r)\to R_{\bar x}(x)=x.
\end{equation*}
Taking the limit superior in the preceding slope estimate and using the distance equality above proves EB on $(\mathcal{M},d)$.
\end{proof}

\section{Main Results: \textit{Geometric} Perspective}

\label{sec:subdiff-eb}

Throughout this section, we assume that $f:\mathbb{R}^n\to(-\infty,+\infty]$ is proper, closed, and convex.
In this setting, error bounds are formulated via the subdifferential and the Riemannian gradient, as specified in Definitions~\ref{def:subdiff-eb}--\ref{def:subdiff-eb-m} below. Partial smoothness will be used throughout the geometric analysis.

In parallel with Theorem~\ref{thm:linear-growth}, the following result provides, under convexity and $C^1$ partial smoothness, a local $\mathcal{VU}$-parameterization of the manifold and identifies the $\mathcal{V}$-space minimizer defining the $\mathcal{U}$-Lagrangian.

\begin{theorem}
    \label{thm:implicit-function}
    Suppose that $f$ is $C^1$-partly smooth at $x \in \mathcal{M}$ relative to $\mathcal{M}$. Then there exists a unique mapping $v$ from a neighborhood of $0$ in $\mathcal{U}(x)$ into a neighborhood of $0$ in $\mathcal{V}(x)$ with the following properties:
    \begin{enumerate}
        \item $v$ is $C^1$-smooth in a neighborhood of $0 \in \mathcal{U}(x)$;
        \item for small vectors $u \in \mathcal{U}(x)$ and $w \in \mathcal{V}(x)$, we have $x + u + w \in \mathcal{M} \Longleftrightarrow w = v(u)$;
        \item $v(0) = 0$ and $\nabla v(0) = 0$; equivalently, $v(u) = o(\|u\|)$ as $u \to 0$;
        \item for every $g \in \operatorname{ri} \partial f(x)$, we have $W(u;g)=\{v(u)\}$ for all sufficiently small $u$, and $v(u)$ is a sharp minimizer.
    \end{enumerate}
\end{theorem}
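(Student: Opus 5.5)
The plan is to reduce everything to the $\mathcal{VU}$-splitting and then reuse Lemma~\ref{lem:local-graph} and Theorem~\ref{thm:linear-growth}. First I identify the spaces. For convex $f$ we have $\hat\partial f(x)=\partial f(x)$, and for any $g\in\partial f(x)$ the span of $\partial f(x)-g$ equals $\operatorname{par}\partial f(x)$. The sharpness condition in Definition~\ref{def:partial-smoothness} then gives $\mathcal{V}(x)=N_x\mathcal{M}$ and $\mathcal{U}(x)=T_x\mathcal{M}$.

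With this identification, Lemma~\ref{lem:local-graph} applied to the $C^1$ manifold $\mathcal{M}$ produces a $C^1$ map $v:=h$ on a neighborhood of $0$ in $\mathcal{U}(x)$ with values in $\mathcal{V}(x)$. It satisfies $v(0)=0$ and $\nabla v(0)=0$, and locally $\mathcal{M}=\{x+u+v(u)\}$. For items 2 and 3, shrink the neighborhood $U$ of Lemma~\ref{lem:local-graph} so that small $u,w$ give $x+u+w\in U$. Then $x+u+w\in\mathcal{M}$ forces $x+u+w=x+u'+v(u')$ for some $u'$, and orthogonality of the splitting gives $u'=u$ and $w=v(u)$. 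Uniqueness of $v$ follows from the same equivalence: any other map with property 2 must equal $v(u)$ pointwise. The equivalence $\nabla v(0)=0\iff v(u)=o(\|u\|)$ is just the definition of the derivative at $0$ together with $v(0)=0$.

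Item 4 is the main step. Fix $g\in\operatorname{ri}\partial f(x)$ and apply Theorem~\ref{thm:linear-growth} to the tilted function $\tilde f(z):=f(z)-\langle g,z-x\rangle$. I expect the main obstacle to be checking that $\tilde f$ satisfies its hypotheses with subgradient $0$:
\begin{itemize}
\item $\partial\tilde f=\partial f-g$ by convexity, so $0\in\operatorname{ri}\partial\tilde f(x)$ and $\operatorname{par}\partial\tilde f(x)=\operatorname{par}\partial f(x)=N_x\mathcal{M}$.
\item Prox-regularity of $\tilde f$ is automatic from convexity, with $\rho=0$.
\item $\tilde f|_{\mathcal M}$ is $C^1$ because a linear tilt preserves restricted smoothness.
\item Inner semicontinuity at $0$ for $\tilde f$ is that of $f$ at $g$. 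Here I use that $f$ is partly smooth at $x$ for all subgradients, in particular for $g$, so the shifted inner semicontinuity holds near $0$.
\end{itemize}
Theorem~\ref{thm:linear-growth} then produces its map, which coincides with $v$ by the uniqueness in item 2. It also gives $\kappa,\epsilon>0$ with
\[
f(x+u+w)-\langle g,w\rangle \ \ge\ f(x+u+v(u))-\langle g,v(u)\rangle+\kappa\|w-v(u)\|
\]
for small $u$ and all $w\in B_\epsilon(0)\cap\mathcal{V}(x)$. Here the terms $\langle g,u\rangle$ cancel on both sides.

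It remains to pass from this local statement to the global minimization over $\mathcal{V}(x)$ in $W(u;g)$. The function $q_u(w)=f(x+u+w)-\langle g,w\rangle$ is convex on $\mathcal{V}(x)$. The displayed inequality says $v(u)$, which lies in the interior of $B_\epsilon(0)$ since $v(u)\to0$, is a strict local minimizer of $q_u$ with linear growth. By convexity, a local minimizer is a global minimizer. It is also the unique global minimizer: for any other $w'$, points on the segment from $v(u)$ to $w'$ near $v(u)$ have strictly larger $q_u$-value, and convexity along the segment rules out $q_u(w')\le q_u(v(u))$. Hence $W(u;g)=\{v(u)\}$, and the linear growth estimate is exactly the sharpness of this minimizer. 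This is consistent with Lemma~\ref{lem:u-lagrangian-property}, which gives $W(0;g)=\{0\}$ at $u=0$.
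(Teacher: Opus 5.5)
Your proof is correct and follows the paper's argument. You identify $\mathcal{U}(x)=T_x\mathcal{M}$ and $\mathcal{V}(x)=N_x\mathcal{M}$ via sharpness, read off items 1--3 from Lemma~\ref{lem:local-graph}, and obtain item 4 by applying Theorem~\ref{thm:linear-growth} to the tilted function and then using convexity to upgrade the sharp local minimizer to the unique global one. Your explicit checks fill in steps the paper only asserts: that the tilt is partly smooth (notably inner semicontinuity inherited from partial smoothness of $f$ at $g$), and the segment argument for uniqueness.
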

\begin{proof}
    Since $f$ is convex, $\hat\partial f(x)=\partial f(x)$. Hence the sharpness condition in Definition~\ref{def:partial-smoothness} gives
    \[
        \mathcal{U}(x)=T_x\mathcal{M},
        \qquad
        \mathcal{V}(x)=N_x\mathcal{M}.
    \]
    Items 1--3 then follow directly from Lemma~\ref{lem:local-graph}.

    It remains to prove item 4. Fix $g\in\operatorname{ri}\partial f(x)$ and define the affine tilt
    \[
        f_g(z):=f(z)-\langle g,z\rangle .
    \]
    Then $f_g$ is convex and $C^1$-partly smooth at $x$ for $0$ relative to $\mathcal{M}$, with $0\in\operatorname{ri}\partial f_g(x)$.

    Applying Theorem~\ref{thm:linear-growth} to $f_g$, after possibly shrinking the neighborhoods, yields constants $\kappa>0$ and $\epsilon>0$ such that
    \[
        f_g(x+u+w)
        \geq
        f_g(x+u+v(u))+\kappa\|w-v(u)\|
    \]
    for all sufficiently small $u\in\mathcal{U}(x)$ and all $w\in B_\epsilon(0)\cap \mathcal{V}(x)$.
    Equivalently,
    \[
        f(x+u+w)-\langle g,w\rangle
        \geq
        f(x+u+v(u))-\langle g,v(u)\rangle
        +\kappa\|w-v(u)\|,
    \]
    where the common term $-\langle g,x+u\rangle$ cancels from both sides.
    Thus $v(u)$ is a sharp local minimizer of
    \[
        w\mapsto f(x+u+w)-\langle g,w\rangle
    \]
    on $\mathcal{V}(x)$.

    Since this function is convex in $w$, any local minimizer is a global minimizer. The sharpness inequality also rules out any other global minimizer. Hence $W(u;g)=\{v(u)\}$ for all sufficiently small $u$.
\end{proof}
\begin{remark}
By Theorem~\ref{thm:implicit-function}, if $g \in \operatorname{ri} \partial f(x)$, then $W(u;g)=\{v(u)\}$ for all sufficiently small $u$. Since $f|_{\mathcal{M}}$ and the mapping $v$ are locally $C^1$-smooth, the $\mathcal{U}$-Lagrangian is finite and $C^1$-smooth around $u = 0$, with
\begin{equation*}
    L_{\mathcal{U}}(u;g) = f(x + u + v(u)) - \langle g, v(u) \rangle.
\end{equation*}
Using the first-order Taylor expansion of $L_{\mathcal{U}}(u;g)$ at $u = 0$, we obtain
\begin{equation*}
    L_{\mathcal{U}}(u;g) = L_{\mathcal{U}}(0;g) + \langle \nabla L_{\mathcal{U}}(0;g), u \rangle + o(\|u\|).
\end{equation*}
Since $\nabla L_{\mathcal{U}}(0;g) = g_\mathcal{U}(x)$ and $L_{\mathcal{U}}(0;g) = f(x)$, we have
\begin{equation*}
    L_{\mathcal{U}}(u;g) = f(x) + \langle g_\mathcal{U}(x), u \rangle + o(\|u\|).
\end{equation*}
If, in addition, $g_{\mathcal{U}}(x) \in \operatorname{ri} \partial f(x)$, then
\begin{equation}
    \label{eq:first-order-expansion}
    L_{\mathcal{U}}(u; g_\mathcal{U}(x)) = f(x + u + v(u)) = f(x) + \langle g_\mathcal{U}(x), u \rangle + o(\|u\|).
\end{equation}
\end{remark}

\begin{remark}
    The parameterization $u\mapsto x+u+v(u)$ is referred to as a \textit{fast track} through $x$ in the literature \cite{mifflin2002proximal,miller2005newton}. We mention this terminology only for reference, since it is not needed below.
\end{remark}

Lemmas~\ref{lem:u-gradient-continuity} and~\ref{lem:ri-subgradient} show that, under $C^1$ partial smoothness, the $\mathcal{U}$-gradient is continuous and remains in the relative interior of the subdifferential. These properties are crucial for the equivalence between EB on $(\mathcal{M},d)$ and EB on $(\mathbb{R}^n,d)$ in Theorem~\ref{thm:eb-equiv-geometric}.
Related results can be found in \cite[Lemma 2.11, Theorem 2.12]{miller2005newton}. The proofs are included for completeness, since the corresponding results in that reference assume $C^\infty$ partial smoothness.
\begin{lemma}[Continuity of $\mathcal{U}$-gradient]
    \label{lem:u-gradient-continuity}
    Suppose that $f$ is $C^1$-partly smooth around $\bar x$ relative to $\mathcal{M}$. Then the $\mathcal{U}$-gradient mapping $x \mapsto g_{\mathcal{U}}(x)$ is continuous on $\mathcal{M}$ near $\bar x$.
\end{lemma}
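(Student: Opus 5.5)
We want to show that $x\mapsto g_{\mathcal U}(x)$ is continuous on $\mathcal M$ near $\bar x$. We use the characterization in Lemma~\ref{lem:u-lagrangian-property}: for any $g\in\partial f(x)$, $g_{\mathcal U}(x)=P_{\mathcal U(x)}(g)=P_{\operatorname{aff}\partial f(x)}(0)$. Since $f$ is convex, $\hat\partial f=\partial f$. The sharpness condition in Definition~\ref{def:partial-smoothness}, applied at each $x\in\mathcal M$ near $\bar x$, gives $\mathcal U(x)=T_x\mathcal M$ and $\mathcal V(x)=N_x\mathcal M$. Because $\mathcal M$ is a $C^1$ manifold, the projector $P_{T_x\mathcal M}$ depends continuously on $x\in\mathcal M$. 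This follows from the local defining map $F$: $P_{N_x\mathcal M}=\nabla F(x)^T(\nabla F(x)\nabla F(x)^T)^{-1}\nabla F(x)$ is continuous, since $\nabla F$ is continuous with full rank. It therefore suffices to produce, for $x_k\to x$ in $\mathcal M$, subgradients $g_k\in\partial f(x_k)$ converging to some $g\in\partial f(x)$. Then
\[
g_{\mathcal U}(x_k)=P_{T_{x_k}\mathcal M}\,g_k\;\longrightarrow\;P_{T_x\mathcal M}\,g=g_{\mathcal U}(x).
\]

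The key step is supplying the convergent subgradients, and inner semicontinuity does this directly. Fix $x\in\mathcal M$ near $\bar x$ where $f$ is partly smooth, and pick any $g\in\partial f(x)$, which is nonempty because partial smoothness is stated for subgradients in $\partial f(x)$. For every sequence $x_k\to x$ in $\mathcal M$, inner semicontinuity of $\partial f$ at $x$ relative to $\mathcal M$ yields $g_k\in\partial f(x_k)$ with $g_k\to g$. Combining this with the projector continuity above proves sequential continuity at $x$. Since $x$ was arbitrary in $\mathcal M$ near $\bar x$, the map $g_{\mathcal U}$ is continuous there.

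An alternative route, if one prefers to avoid relying on $g$ being arbitrary, is to write $g_{\mathcal U}(x)=\nabla_{\mathcal M}f(x)$. For convex $f$ partly smooth at $x$, every $g\in\partial f(x)$ satisfies $P_{T_x\mathcal M}g=\nabla_{\mathcal M}f(x)$. To see this, take a $C^1$ curve $c$ on $\mathcal M$ with $c'(0)=\xi$. The subgradient inequality gives $f(c(t))-f(x)\ge\langle g,c(t)-x\rangle$, so $\mathrm Df(x)[\xi]\ge\langle g,\xi\rangle$ for all $\xi\in T_x\mathcal M$. Applying this to $\pm\xi$ gives equality. Continuity of the Riemannian gradient, $\nabla_{\mathcal M}f(x)=P_{T_x\mathcal M}\nabla\tilde f(x)$ with a $C^1$ extension $\tilde f$ and a continuous projector, then gives the result. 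I would present this second argument as a cross-check.

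The main obstacle is only bookkeeping. One must make sure the neighborhood on which $f$ is partly smooth at every point is the same one on which the manifold chart is valid, so that both the projector continuity and inner semicontinuity apply simultaneously. Shrinking to a common neighborhood of $\bar x$ handles this.
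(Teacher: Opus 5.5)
Your main argument is the paper's proof: sharpness gives $\mathcal{U}(x)=T_x\mathcal{M}$, and the projector $P_{T_x\mathcal{M}}$ is continuous via a $C^1$ defining map $F$. Inner semicontinuity, applied at $x$ with reference subgradient $\bar y=g$, then supplies $g_k\in\partial f(x_k)$ with $g_k\to g$, so $P_{T_{x_k}\mathcal{M}}g_k\to P_{T_x\mathcal{M}}g$.

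Your cross-check is also correct and somewhat stronger than what the paper uses. It shows $g_{\mathcal{U}}(x)=P_{T_x\mathcal{M}}g=\nabla_{\mathcal{M}}f(x)$ for $x\in\mathcal{M}$ near $\bar x$ without the relative-interior hypothesis of Proposition~\ref{prop:covariant-u}, so continuity then follows from the $C^1$ smoothness of $f|_{\mathcal{M}}$ alone, with no appeal to inner semicontinuity.
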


\begin{proof}
    By Lemma~\ref{lem:u-lagrangian-property}, we have
    \begin{equation*}
        g_{\mathcal{U}}(x) = P_{\mathcal{U}(x)}(\partial f(x)) = P_{T_{x}\mathcal{M}}(\partial f(x))
    \end{equation*}
    for all $x \in \mathcal{M}$ near $\bar x$.
    Let $F:\mathbb{R}^n\to\mathbb{R}^{n-m}$ be a $C^1$ local defining function for $\mathcal{M}$ around $\bar x$. Thus, for some $\epsilon>0$,
    $\mathcal{M}\cap B_{\epsilon}(\bar x)=\{x:F(x)=0\}\cap B_{\epsilon}(\bar x)$, and $\nabla F(x)$ has full rank for all $x\in\mathcal{M}\cap B_{\epsilon}(\bar x)$.
    Then the projection onto the tangent space can be expressed as
    \begin{equation*}
        P_{T_{x}\mathcal{M}} = I - \nabla F(x)^{\top} (\nabla F(x) \nabla F(x)^{\top})^{-1} \nabla F(x).
    \end{equation*}
    By inner semicontinuity of $\partial f$ along $\mathcal{M}$, for any $x\in\mathcal{M}$ near $\bar x$, any $g\in\partial f(x)$, and any sequence $x_r\to x$ in $\mathcal{M}$, there exist $g_r\in\partial f(x_r)$ such that $g_r\to g$.
    Therefore, we have
    \begin{equation*}
        g_{\mathcal{U}}(x_r) = P_{T_{x_r}\mathcal{M}}(g_r) \to P_{T_{x}\mathcal{M}}(g) = g_{\mathcal{U}}(x).
    \end{equation*}
\end{proof}

\begin{lemma}[Persistence of the $\mathcal{U}$-gradient as a relative-interior subgradient]
    \label{lem:ri-subgradient}
    Suppose that $f$ is $C^1$-partly smooth around $\bar x$ relative to $\mathcal{M}$. If $g_{\mathcal{U}}(\bar x) \in \operatorname{ri} \partial f(\bar x)$, then for all $x$ in $\mathcal{M}$ near $\bar x$, we have $g_{\mathcal{U}}(x) \in \operatorname{ri} \partial f(x)$.
\end{lemma}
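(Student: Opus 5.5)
Since $f$ is convex and $C^1$-partly smooth around $\bar x$ relative to $\mathcal{M}$, at every $x\in\mathcal{M}$ near $\bar x$ we have $\operatorname{par}\partial f(x)=N_x\mathcal{M}$. Hence $\operatorname{aff}\partial f(x)=g_{\mathcal U}(x)+N_x\mathcal{M}$, because $g_{\mathcal U}(x)=P_{\operatorname{aff}\partial f(x)}(0)$ by Lemma~\ref{lem:u-lagrangian-property}. Membership $g_{\mathcal U}(x)\in\operatorname{ri}\partial f(x)$ is therefore equivalent to the existence of $r>0$ with
\[
g_{\mathcal U}(x)+\bigl(B_r(0)\cap N_x\mathcal{M}\bigr)\subseteq\partial f(x).
\]
By hypothesis this holds at $\bar x$ with some radius $2\delta$. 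The plan is to transport this ball uniformly to nearby $x\in\mathcal{M}$ using inner semicontinuity, the continuity of $g_{\mathcal U}$ (Lemma~\ref{lem:u-gradient-continuity}), and the continuity of $x\mapsto P_{N_x\mathcal{M}}$ (from the formula with a $C^1$ defining map $F$, as in the proof of Lemma~\ref{lem:u-gradient-continuity}).

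The argument is by contradiction. Suppose there are $x_k\to\bar x$ in $\mathcal{M}$ with $g_{\mathcal U}(x_k)\notin\operatorname{ri}\partial f(x_k)$. By convexity of $\partial f(x_k)$ and the description of the affine hull, there is a unit normal $d_k\in N_{x_k}\mathcal{M}$ such that $g_{\mathcal U}(x_k)$ lies on the relative boundary in direction $d_k$. That is, a supporting hyperplane inside the affine hull gives
\[
\langle s-g_{\mathcal U}(x_k),d_k\rangle\le 0\qquad\text{for all } s\in\partial f(x_k).
\]
Pass to a subsequence with $d_k\to d$. Then $d\in N_{\bar x}\mathcal{M}$ by continuity of the normal spaces, and $\|d\|=1$. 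The point $g:=g_{\mathcal U}(\bar x)+\delta d$ lies in $\partial f(\bar x)$ and is in the relative interior. By inner semicontinuity (which holds for all subgradients near any given one, since $f$ is partly smooth at $\bar x$ for every subgradient), there are $s_k\in\partial f(x_k)$ with $s_k\to g$. Then
\[
\langle s_k-g_{\mathcal U}(x_k),d_k\rangle\to\langle \delta d,d\rangle=\delta>0,
\]
using $g_{\mathcal U}(x_k)\to g_{\mathcal U}(\bar x)$ from Lemma~\ref{lem:u-gradient-continuity}. This contradicts the supporting inequality for large $k$.

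The main obstacle is justifying the supporting inequality rigorously, which requires $\operatorname{par}\partial f(x_k)=N_{x_k}\mathcal{M}$ exactly. This is where partial smoothness \emph{around} $\bar x$, rather than only at $\bar x$, is used. If $g_{\mathcal U}(x_k)\notin\operatorname{ri}\partial f(x_k)$, then relative-interior separation in the affine hull $g_{\mathcal U}(x_k)+N_{x_k}\mathcal{M}$ yields a nonzero direction $d_k\in N_{x_k}\mathcal{M}$ with the stated property, and we normalize it. A minor point is that inner semicontinuity in Definition~\ref{def:partial-smoothness} is stated for $y$ near $\bar y$. Since we assume partial smoothness at $\bar x$ for all subgradients, we apply it with $\bar y=g$ itself.
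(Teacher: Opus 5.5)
Your proof is correct and follows essentially the same route as the paper. You argue by contradiction, separate $g_{\mathcal U}(x_k)$ from $\partial f(x_k)$ inside the affine hull $g_{\mathcal U}(x_k)+N_{x_k}\mathcal{M}$, take a convergent subsequence of unit separating vectors, and use inner semicontinuity with continuity of $g_{\mathcal U}$ to reach a contradiction. The only cosmetic difference is that the paper transfers everything to the fixed space $\mathbb{R}^{n-m}$ via $\varphi_x=(\nabla F(x)\nabla F(x)^{\top})^{-1}\nabla F(x)$ and tests against the whole ball $B_r(0)\subseteq C(\bar x)$, whereas you work directly with unit normals $d_k\in N_{x_k}\mathcal{M}$ (using continuity of $x\mapsto P_{N_x\mathcal{M}}$) and test against the single point $g_{\mathcal U}(\bar x)+\delta d$.
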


\begin{proof}
    By Lemma~\ref{lem:u-lagrangian-property}, $\mathcal{V}(x) = \operatorname{lin} (\partial f(x) - g_{\mathcal{U}}(x)) = N_{x} \mathcal{M}$.
    Let $F:\mathbb{R}^n\to\mathbb{R}^{n-m}$ be a $C^1$ local defining function for $\mathcal{M}$ around $\bar x$. Thus, for some $\epsilon>0$,
    $\mathcal{M}\cap B_{\epsilon}(\bar x)=\{x:F(x)=0\}\cap B_{\epsilon}(\bar x)$, and $\nabla F(x)$ has full rank for all $x\in\mathcal{M}\cap B_{\epsilon}(\bar x)$.
    Since $N_{x} \mathcal{M} = \operatorname{Range}\left(\nabla F(x)^{\top}\right)$, define the linear isomorphism $\psi_x: \mathbb{R}^{n-m} \to \mathcal{V}(x)$ by $\psi_x = \nabla F(x)^{\top}$.
    Let $\varphi _x = \psi_x^{-1}: \mathcal{V}(x) \to \mathbb{R}^{n-m}$. Then $\varphi_x = (\nabla F(x) \nabla F(x)^{\top})^{-1} \nabla F(x)$.
    Define $C(x) = \varphi_x(\partial f(x) - g_{\mathcal{U}}(x))$. By the continuity of $\varphi_x$ and $g_{\mathcal{U}}(x)$ and the inner semicontinuity of $\partial f$, the set-valued mapping $C:\mathcal{M}\rightrightarrows\mathbb{R}^{n-m}$ is inner semicontinuous. Furthermore, $g_{\mathcal{U}}(x) \in \operatorname{ri} \partial f(x)$ is equivalent to $0 \in \operatorname{int} C(x)$.

    Assume, to the contrary, that there exists a sequence $x_k \to \bar x$ in $\mathcal{M}$ such that $g_{\mathcal{U}}(x_k) \notin \operatorname{ri} \partial f(x_k)$, which is equivalent to $0 \notin \operatorname{int} C(x_k)$.
    Since $C(x_k)$ is convex, there exists a nonzero vector $w_k \in \mathbb{R}^{n-m}$ such that $\langle w_k, c \rangle \leq 0$ for all $c \in C(x_k)$. After scaling, we may assume that $\|w_k\| = 1$ for all $k$.
    Passing to a subsequence, we may further assume that $w_k \to w$ for some $w \in \mathbb{R}^{n-m}$ with $\|w\| = 1$.
    Since $g_{\mathcal{U}}(\bar x) \in \operatorname{ri} \partial f(\bar x)$, we have $0 \in \operatorname{int} C(\bar x)$, which means that there exists $r>0$ such that $B_r(0) \subseteq C(\bar x)$.
    For each $v \in B_r(0)$, inner semicontinuity of $C$ yields a sequence $v_k \in C(x_k)$ such that $v_k \to v$.
    Consequently, $\langle w_k, v_k \rangle \leq 0$ for all $k$, and hence $\langle w, v \rangle \leq 0$ for every $v \in B_r(0)$. This forces $w = 0$, a contradiction.
\end{proof}

Combining Lemmas~\ref{lem:u-lagrangian-property} and~\ref{lem:ri-subgradient} yields the following corollary.
\begin{corollary}
    \label{cor:u-gradient-projection}
    Suppose that $f$ is $C^1$-partly smooth around $\bar x$ relative to $\mathcal{M}$. If $0 \in \operatorname{ri} \partial f(\bar x)$, then for all $x$ in $\mathcal{M}$ near $\bar x$, we have $g_{\mathcal{U}}(x) = P_{\partial f(x)}(0)$.
\end{corollary}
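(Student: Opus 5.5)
The plan is to combine the two lemmas just proved with the identity from Lemma~\ref{lem:u-lagrangian-property} expressing $g_{\mathcal U}(x)$ as the projection of $0$ onto $\operatorname{aff}\partial f(x)$. The starting point is $x=\bar x$. Since $0\in\partial f(\bar x)$, Lemma~\ref{lem:u-lagrangian-property} gives
\[
g_{\mathcal U}(\bar x)=P_{\mathcal U(\bar x)}(0)=0 ,
\]
and the hypothesis $0\in\operatorname{ri}\partial f(\bar x)$ therefore reads $g_{\mathcal U}(\bar x)\in\operatorname{ri}\partial f(\bar x)$. Lemma~\ref{lem:ri-subgradient} now applies and yields $g_{\mathcal U}(x)\in\operatorname{ri}\partial f(x)\subseteq\partial f(x)$ for all $x\in\mathcal M$ near $\bar x$.

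Next, fix such an $x$. By Lemma~\ref{lem:u-lagrangian-property}, $g_{\mathcal U}(x)=P_{\operatorname{aff}\partial f(x)}(0)$. This is the nearest point to $0$ in the affine hull, and the affine hull contains the convex set $\partial f(x)$, which is closed because $f$ is convex and closed. Combining this with $g_{\mathcal U}(x)\in\partial f(x)$ from the first step gives
\[
\|g_{\mathcal U}(x)\|=\operatorname{dist}(0,\operatorname{aff}\partial f(x))\le \operatorname{dist}(0,\partial f(x))\le \|g_{\mathcal U}(x)\|.
\]
So $g_{\mathcal U}(x)$ attains the distance from $0$ to $\partial f(x)$. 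Projection onto a nonempty closed convex set is unique, hence $g_{\mathcal U}(x)=P_{\partial f(x)}(0)$.

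The only delicate point is matching the hypotheses of the two lemmas. $C^1$ partial smoothness around $\bar x$ makes $\mathcal U(x)=T_x\mathcal M$ valid at each nearby $x\in\mathcal M$, which Lemma~\ref{lem:ri-subgradient} requires, and $\partial f(x)$ must be nonempty there; both hold by assumption on the neighborhood. Everything else is immediate.
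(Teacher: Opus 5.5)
Your proof is correct and is exactly the combination the paper intends (the paper states the corollary without a written proof, citing Lemmas~\ref{lem:u-lagrangian-property} and~\ref{lem:ri-subgradient}). The chain is: $0\in\partial f(\bar x)$ gives $g_{\mathcal U}(\bar x)=0\in\operatorname{ri}\partial f(\bar x)$; Lemma~\ref{lem:ri-subgradient} gives $g_{\mathcal U}(x)\in\operatorname{ri}\partial f(x)$ for nearby $x\in\mathcal M$; and $g_{\mathcal U}(x)=P_{\operatorname{aff}\partial f(x)}(0)$, being the nearest point of the affine hull and lying in $\partial f(x)$, must equal $P_{\partial f(x)}(0)$. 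The nonemptiness of $\partial f(x)$ that you flag as delicate is in fact automatic from the conclusion of Lemma~\ref{lem:ri-subgradient}.
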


Corollary~\ref{cor:u-gradient-projection} shows that if $f$ is $C^1$-partly smooth around $\bar x$ relative to $\mathcal{M}$ and $0 \in \operatorname{ri} \partial f(\bar x)$, then, for all $x\in\mathcal{M}$ near $\bar x$,
\begin{equation*}
    \left\|g_{\mathcal{U}}(x)\right\| = \operatorname{dist}(0, \partial f(x)).
\end{equation*}
We next show that $g_{\mathcal{U}}(x)$ coincides with the Riemannian gradient of $f|_{\mathcal{M}}$ at $x$.

\begin{proposition}
    \label{prop:covariant-u}
    Suppose that $f$ is $C^1$-partly smooth around $\bar x$ relative to $\mathcal{M}$ and $g_{\mathcal{U}}(\bar x) \in \operatorname{ri} \partial f(\bar x)$. Then, for every $x\in\mathcal{M}$ near $\bar x$, the Riemannian gradient of $f|_{\mathcal{M}}$ is given by
    \begin{equation*}
        \nabla_{\mathcal{M}} f(x) = g_{\mathcal{U}}(x).
    \end{equation*}
\end{proposition}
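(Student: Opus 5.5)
Fix $x\in\mathcal{M}$ near $\bar x$. The plan is to compute the directional derivative of $f|_{\mathcal M}$ along the local parametrization of $\mathcal M$ at $x$, and to show that it equals $\langle g_{\mathcal U}(x),u\rangle$ for every tangent direction $u$.

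\textbf{Step 1: $x$ satisfies the hypotheses of the earlier results.} By Lemma~\ref{lem:ri-subgradient}, after shrinking the neighborhood of $\bar x$, we have $g:=g_{\mathcal U}(x)\in\operatorname{ri}\partial f(x)$. Since $f$ is $C^1$-partly smooth around $\bar x$, it is $C^1$-partly smooth at $x$ relative to $\mathcal M$. The sharpness property gives $\mathcal U(x)=T_x\mathcal M$ and $\mathcal V(x)=N_x\mathcal M$.

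\textbf{Step 2: expansion along the parametrization.} Apply Theorem~\ref{thm:implicit-function} at $x$ to obtain the $C^1$ map $v$ with $v(0)=0$, $\nabla v(0)=0$, and $x+u+v(u)\in\mathcal M$. Since $g\in\operatorname{ri}\partial f(x)$, item 4 of that theorem gives $W(u;g)=\{v(u)\}$. Hence
\[
L_{\mathcal U}(u;g)=f(x+u+v(u))-\langle g,v(u)\rangle .
\]
Now $g=g_{\mathcal U}(x)\in\mathcal U(x)$ and $v(u)\in\mathcal V(x)$, so $\langle g,v(u)\rangle=0$. Therefore $L_{\mathcal U}(u;g)=f(x+u+v(u))$, which is \eqref{eq:first-order-expansion}.

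Lemma~\ref{lem:u-lagrangian-property} states that $L_{\mathcal U}(\cdot;g)$ is differentiable at $0$ with gradient $g_{\mathcal U}(x)$. Consequently
\[
f(x+u+v(u))=f(x)+\langle g_{\mathcal U}(x),u\rangle+o(\|u\|).
\]

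\textbf{Step 3: identification with the Riemannian gradient.} For any $\xi\in T_x\mathcal M$, consider the curve $c(t)=x+t\xi+v(t\xi)$. It is $C^1$, lies in $\mathcal M$, and satisfies $c(0)=x$ and $c'(0)=\xi+\nabla v(0)\xi=\xi$. Since $f|_{\mathcal M}$ is $C^1$, the definition of the Riemannian gradient gives
\[
\langle\nabla_{\mathcal M}f(x),\xi\rangle=\frac{d}{dt}f(c(t))\Big|_{t=0}.
\]
By Step 2 with $u=t\xi$, this derivative equals $\langle g_{\mathcal U}(x),\xi\rangle$. Both $\nabla_{\mathcal M}f(x)$ and $g_{\mathcal U}(x)$ lie in $T_x\mathcal M$, and $\xi\in T_x\mathcal M$ is arbitrary, so they coincide.

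\textbf{Main obstacle.} The delicate part is Step 2: we must confirm that the differentiability of $L_{\mathcal U}$ at $0$ applies at the nearby point $x$ with the specific subgradient $g=g_{\mathcal U}(x)$, and that the neighborhoods from Lemma~\ref{lem:ri-subgradient} and Theorem~\ref{thm:implicit-function} can be chosen compatibly.

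If that route is problematic, a direct alternative avoids the $\mathcal U$-Lagrangian. By inner semicontinuity and prox-regularity/convexity, $f(c(t))\ge f(x)+\langle g,c(t)-x\rangle$ for $g\in\partial f(x)$, which gives a one-sided bound on the derivative. Applying the same bound with $-\xi$ forces $\langle\nabla_{\mathcal M}f(x),\xi\rangle=\langle g,\xi\rangle=\langle P_{T_x\mathcal M}g,\xi\rangle$. This alternative uses only convexity and the fact that $c$ is differentiable.
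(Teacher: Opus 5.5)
Your Steps 1--3 are essentially the paper's proof: Lemma~\ref{lem:ri-subgradient} gives $g_{\mathcal U}(x)\in\operatorname{ri}\partial f(x)$, item 4 of Theorem~\ref{thm:implicit-function} together with Lemma~\ref{lem:u-lagrangian-property} gives \eqref{eq:first-order-expansion} at $x$, and you then differentiate along $t\mapsto x+t\xi+v(t\xi)$; you usefully make explicit that $\langle g_{\mathcal U}(x),v(u)\rangle=0$, and the neighborhood issue you flag is harmless because $x$ is fixed and Lemma~\ref{lem:u-lagrangian-property} holds pointwise. Your fallback is also correct and more elementary: the two-sided subgradient inequality along a $C^1$ curve in $\mathcal M$ gives $\nabla_{\mathcal M}f(x)=P_{T_x\mathcal M}g$ for every $g\in\partial f(x)$, and this uses only convexity, $C^1$ smoothness of $f|_{\mathcal M}$ and $T_x\mathcal M=\mathcal U(x)$, so neither inner semicontinuity nor the relative-interior hypothesis is needed.
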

\begin{proof}
    Fix $x\in\mathcal{M}$ near $\bar x$. By Lemma~\ref{lem:ri-subgradient}, $g_{\mathcal{U}}(x)\in\operatorname{ri}\partial f(x)$. For $u\in\mathcal{U}(x)=T_x\mathcal{M}$, let
    \[
        z(t)=x+tu+v(tu).
    \]
    By Theorem~\ref{thm:implicit-function}, $z(0)=x$ and $\dot z(0)=u$.
    By the definition of the Riemannian gradient,
    \begin{equation*}
        \langle\nabla_{\mathcal{M}} f(x),u\rangle
        =\mathrm{D}(f|_{\mathcal{M}})(x)[u]
        =\left.\frac{d}{dt}f(z(t))\right|_{t=0}.
    \end{equation*}
    Applying \eqref{eq:first-order-expansion} with $tu$ in place of $u$ gives
    \begin{equation*}
        f(z(t))
        =f(x)+t\langle g_{\mathcal{U}}(x),u\rangle+o(|t|).
    \end{equation*}
    Therefore,
    \begin{equation*}
        \left.\frac{d}{dt}f(z(t))\right|_{t=0}
        =\langle g_{\mathcal{U}}(x),u\rangle.
    \end{equation*}
    Hence,
    \begin{equation*}
        \langle\nabla_{\mathcal{M}} f(x),u\rangle
        =\langle g_{\mathcal{U}}(x),u\rangle,
        \qquad \forall u\in T_x\mathcal{M}.
    \end{equation*}
    Since both vectors belong to $T_x\mathcal{M}$, it follows that $\nabla_{\mathcal{M}}f(x)=g_{\mathcal{U}}(x)$.
\end{proof}

\begin{remark}
    Combining Corollary~\ref{cor:u-gradient-projection} and Proposition~\ref{prop:covariant-u} yields
\begin{equation*}
    \left\|g_{\mathcal{U}}(x)\right\| = \operatorname{dist}(0, \partial f(x)) = \|\nabla_{\mathcal{M}} f(x)\|,
\end{equation*}
for all $x$ in $\mathcal{M}$ near $\bar x$. This equality is crucial for the equivalence between EB on $(\mathbb{R}^n,d)$ and EB on $(\mathcal{M},d)$ in Theorem~\ref{thm:eb-equiv-geometric}.
\end{remark}

Next, we introduce the definitions of local error bounds on $(\mathbb{R}^n,d)$ and $(\mathcal{M},d)$, respectively.

\begin{definition}[Local Error Bound (EB) on $(\mathbb{R}^n,d)$]
    \label{def:subdiff-eb}
    Let $f:\mathbb{R}^n\to(-\infty,+\infty]$ be a proper, closed function, and suppose that the solution set $S := \arg\min f$ is nonempty.
    We say that $f$ satisfies the error bound (EB) around $\bar x \in S$ on $(\mathbb{R}^n,d)$ if there exist constants $\epsilon,\nu,\mu > 0$ such that for all $x \in B_{\epsilon}(\bar x) \cap [f\le f(\bar x)+\nu]$,
        \begin{equation*}
            \mu\operatorname{dist}(x, S) \leq \operatorname{dist} (0, \hat \partial f(x)).
        \end{equation*}
    Here and throughout, $\operatorname{dist}(0,\varnothing)=+\infty$.
\end{definition}

\begin{definition}[Local Error Bound (EB) on $(\mathcal{M},d)$]
    \label{def:subdiff-eb-m}
   Let $f:\mathbb{R}^n\to(-\infty,+\infty]$ be a proper, closed function, and let $S:=\arg\min f$ be nonempty. Suppose that $\mathcal{M}$ is a $C^1$-smooth manifold, $f|_{\mathcal M}$ is $C^1$-smooth near $\bar x$, and $\bar x\in\mathcal{M}\cap S$. We say that $f$ satisfies the error bound (EB) around $\bar x$ on $(\mathcal{M},d)$ if there exist constants $\epsilon,\mu > 0$ such that for all $x \in B_{\epsilon}(\bar x) \cap \mathcal{M}$,
        \begin{equation*}
            \mu\operatorname{dist}(x, S \cap \mathcal{M}) \leq \left\| \nabla_{\mathcal{M}} f(x) \right\|.
        \end{equation*}
\end{definition}

Combining the above discussion and Lemma~\ref{lem:sharpness-under-C1-partly-smoothness}, we obtain the following geometric proof of the EB equivalence.
\begin{theorem}
    \label{thm:eb-equiv-geometric}
    Suppose that $f$ is $C^1$-partly smooth around $\bar x$ relative to $\mathcal{M}$, with $0 \in \operatorname{ri} \partial f(\bar x)$.
    Then the local error-bound property on $(\mathbb{R}^n,d)$ is equivalent to that on $(\mathcal{M},d)$; equivalently, $f$ satisfies Definition~\ref{def:subdiff-eb} if and only if it satisfies Definition~\ref{def:subdiff-eb-m}.
\end{theorem}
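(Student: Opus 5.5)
The plan is to mimic the structure of the proof of Theorem~\ref{thm:eb-equiv-metric}, but with slopes replaced by subdifferential distances. Since $f$ is convex, $\hat\partial f=\partial f$. By Proposition~\ref{prop:slope}, $|\nabla f|(x)=\operatorname{dist}(0,\partial f(x))$ for every $x\in\operatorname{dom} f$, so Definition~\ref{def:subdiff-eb} coincides with Definition~\ref{def:slope-eb}. The key identity will come from the preceding remark, which combines Corollary~\ref{cor:u-gradient-projection} and Proposition~\ref{prop:covariant-u}. Note that $0\in\operatorname{ri}\partial f(\bar x)$ gives $g_{\mathcal U}(\bar x)=P_{\operatorname{aff}\partial f(\bar x)}(0)=0\in\operatorname{ri}\partial f(\bar x)$ by Lemma~\ref{lem:u-lagrangian-property}, since $0$ already lies in the affine hull. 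Hence both results apply, and for all $x\in\mathcal M$ near $\bar x$,
\[
\|\nabla_{\mathcal M} f(x)\|=\|g_{\mathcal U}(x)\|=\operatorname{dist}(0,\partial f(x)).
\]

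Next comes off-manifold sharpness. Partial smoothness around $\bar x$ includes partial smoothness at $\bar x$ for $0$. Lemma~\ref{lem:sharpness-under-C1-partly-smoothness} then yields $\eta>0$ with $\operatorname{dist}(0,\partial f(x))>\eta$ for $x\notin\mathcal M$, $x$ near $\bar x$, $f(x)$ near $f(\bar x)$. As in the proof of Theorem~\ref{thm:eb-equiv-metric}, this gives $\operatorname{dist}(x,S)=\operatorname{dist}(x,S\cap\mathcal M)$ for $x\in\mathcal M$ near $\bar x$: a closer solution $y\notin\mathcal M$ lies within $2\epsilon$ of $\bar x$ with $f(y)=f(\bar x)$ and $0\in\partial f(y)$, contradicting the bound.

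Direction ($\mathcal M\Rightarrow\mathbb R^n$). Take $x\in B_\epsilon(\bar x)\cap[f\le f(\bar x)+\nu]$. Since $f(x)\ge f(\bar x)$ automatically, $f(x)$ is close to $f(\bar x)$ once $\nu$ is small. If $x\in\mathcal M$, then
\[
\mu\operatorname{dist}(x,S)\le\mu\operatorname{dist}(x,S\cap\mathcal M)\le\|\nabla_{\mathcal M}f(x)\|=\operatorname{dist}(0,\partial f(x)).
\]
If $x\notin\mathcal M$, then $\operatorname{dist}(0,\partial f(x))>\eta\ge\mu\|x-\bar x\|\ge\mu\operatorname{dist}(x,S)$ after shrinking $\epsilon$. (If $\partial f(x)=\varnothing$ the inequality is trivial.)

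Direction ($\mathbb R^n\Rightarrow\mathcal M$). Points of $\mathcal M$ near $\bar x$ lie in $[f\le f(\bar x)+\nu]$ by continuity of $f|_{\mathcal M}$, which holds because $f|_{\mathcal M}$ is $C^1$. For such $x$,
\[
\mu\operatorname{dist}(x,S\cap\mathcal M)=\mu\operatorname{dist}(x,S)\le\operatorname{dist}(0,\partial f(x))=\|\nabla_{\mathcal M}f(x)\|.
\]

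The main obstacle is making all neighborhoods compatible: the gradient identity needs $x$ in the neighborhood from Lemma~\ref{lem:ri-subgradient}, the distance equality needs $2\epsilon$ within the sharpness neighborhood, and the level condition of the sharpness lemma must be checked. I expect only bookkeeping here, handled by taking the minimum of the radii. This proof avoids the linear-growth argument entirely, because the gradient identity transfers the error bound pointwise.
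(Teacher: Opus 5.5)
Your proposal is correct and takes essentially the same approach as the paper. The paper also uses the identity $\|\nabla_{\mathcal M}f(x)\|=\operatorname{dist}(0,\partial f(x))$ on $\mathcal M$ (Corollary~\ref{cor:u-gradient-projection} with Proposition~\ref{prop:covariant-u}), the off-manifold bound from Lemma~\ref{lem:sharpness-under-C1-partly-smoothness}, the equality $\operatorname{dist}(x,S)=\operatorname{dist}(x,S\cap\mathcal M)$, and the same on/off-$\mathcal M$ case split; your explicit check that $g_{\mathcal U}(\bar x)=0\in\operatorname{ri}\partial f(\bar x)$ fills a detail the paper leaves implicit, but linear growth is not avoided entirely, since Proposition~\ref{prop:covariant-u} rests on \eqref{eq:first-order-expansion}, which relies on item~4 of Theorem~\ref{thm:implicit-function}, proved via Theorem~\ref{thm:linear-growth}.
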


\begin{proof}
    We first prove that Definition~\ref{def:subdiff-eb} implies Definition~\ref{def:subdiff-eb-m}.
    Suppose that $f$ satisfies Definition~\ref{def:subdiff-eb} for some $\epsilon,\nu,\mu > 0$. Since $f|_{\mathcal{M}}$ is $C^1$ near $\bar x$, we may shrink $\epsilon$ so that $B_\epsilon(\bar x)\cap\mathcal{M}\subseteq [f\le f(\bar x)+\nu]$. Then, for all $x \in B_{\epsilon}(\bar x) \cap \mathcal{M}$, we have
    \begin{equation*}
        \mu \operatorname{dist}(x, S) \leq \operatorname{dist}(0, \partial f(x)) = \| \nabla_{\mathcal{M}} f(x) \|.
    \end{equation*}
    Since $S \cap \mathcal{M} \subseteq S$, we have $\operatorname{dist}(x, S) \leq \operatorname{dist}(x, S \cap \mathcal{M})$.
    Therefore, we only need to show that $\operatorname{dist}(x, S) < \operatorname{dist}(x, S \cap \mathcal{M})$ cannot hold for any $x \in B_{\epsilon}(\bar x)\cap\mathcal{M}$.
    By Lemma~\ref{lem:sharpness-under-C1-partly-smoothness}, there exists $\eta>0$ such that $\operatorname{dist}(0, \partial f(x)) > \eta$ whenever $x\notin\mathcal{M}$ is sufficiently close to $\bar x$ and $f(x)$ is sufficiently close to $f(\bar x)$.
    Suppose that there exists $x \in B_{\epsilon}(\bar x) \cap \mathcal{M}$ such that
    \begin{equation*}
        \operatorname{dist}(x, S) < \operatorname{dist}(x, S \cap \mathcal{M}).
    \end{equation*}
    Then there exists $y \in S$ such that $\| x - y \| < \operatorname{dist}(x, S \cap \mathcal{M}) \leq \| x - \bar x \|$.
    Therefore, $y \notin \mathcal{M}$ and $\|y-\bar x\|\le \|y-x\|+\|x-\bar x\|<2\|x-\bar x\|<2\epsilon$.
    Thus $\| z \| \geq \eta$ for all $z \in \partial f(y)$.
    Since $y \in S$, we have $0 \in \partial f(y)$, which is a contradiction.

    Next, we prove that Definition~\ref{def:subdiff-eb-m} implies Definition~\ref{def:subdiff-eb}.
    Suppose that $f$ satisfies Definition~\ref{def:subdiff-eb-m} for some $\epsilon,\mu > 0$. Choose any $\nu>0$. Then, for all $x \in B_{\epsilon}(\bar x) \cap \mathcal{M}$, we have
    \begin{equation*}
        \mu \operatorname{dist}(x, S \cap \mathcal{M}) \leq \| \nabla_{\mathcal{M}} f(x) \| = \operatorname{dist}(0, \partial f(x)).
    \end{equation*}
    Since $S \cap \mathcal{M} \subseteq S$, we have $\operatorname{dist}(x, S \cap \mathcal{M}) \geq \operatorname{dist}(x, S)$.
    Next, consider the case in which $x \in B_{\epsilon}(\bar x) \setminus \mathcal{M}$.
    After possibly shrinking $\epsilon$, we have, for all $x\in B_\epsilon(\bar x)$,
    \begin{equation*}
        \operatorname{dist}(x, S) \leq \| x - \bar x \| \leq \frac{\eta}{\mu}.
    \end{equation*}
    By shrinking $\epsilon$ and $\nu$ if necessary, we may also ensure that $\operatorname{dist}(0, \partial f(x)) \geq \eta$ for all $x \in B_{\epsilon}(\bar x) \cap [f\le f(\bar x)+\nu] \setminus \mathcal{M}$.
    Hence, for all $x \in B_{\epsilon}(\bar x) \cap [f\le f(\bar x)+\nu] \setminus \mathcal{M}$, we have
    \begin{equation*}
        \mu \operatorname{dist}(x, S) \le \eta \leq \operatorname{dist}(0, \partial f(x)).
    \end{equation*}
    Together with the on-manifold estimate above, this proves Definition~\ref{def:subdiff-eb}.
\end{proof}

\begin{remark}
    Although the preceding arguments are presented in the convex setting, the $\mathcal{VU}$ viewpoint suggests possible extensions to subdifferentially regular nonconvex functions through a generalized $\mathcal{VU}$-decomposition. We refer to \cite{liu2020u} for further details on generalized $\mathcal{VU}$-decompositions.
\end{remark}

\section{Discussion and Application}
\label{sec:discussion-application}

In this paper, we introduce the definitions of partial smoothness (Definition~\ref{def:partial-smoothness}) and identifiable manifold (Definition~\ref{def:identifiable}). We also present two different definitions of error bounds: one based on the slope (Definition~\ref{def:slope-eb}) and the other based on the subdifferential (Definition~\ref{def:subdiff-eb}).

When $f$ is a subdifferentially regular function (such as a weakly convex function), we have $|\nabla f|(x) = \operatorname{dist}(0, \partial f( x))$ according to Proposition~\ref{prop:slope}.
Therefore, Definition~\ref{def:slope-eb} is consistent with Definition~\ref{def:subdiff-eb} when $f$ is subdifferentially regular.
For the more general case where $f$ is not subdifferentially regular, \cite[Remark 3.6]{drusvyatskiy2021nonsmooth} establishes the equivalence between the slope EB and the subdifferential EB.
Lemma~\ref{lem:slope-grad} shows that $|\nabla (f|_{\mathcal{M}})|(x) = \| \nabla_{\mathcal{M}} f(x) \|$ for all $x \in \mathcal{M}$. Consequently, Definition~\ref{def:slope-eb-m} is consistent with Definition~\ref{def:subdiff-eb-m}.

The following table summarizes the key similarities and differences between the metric and geometric approaches.
\begin{center}
\begin{tabular}{l|c|c}
\hline
& \textbf{Metric} (Section~\ref{sec:slope-eb}) & \textbf{Geometric} (Section~\ref{sec:subdiff-eb})\\
\hline
Function class & nonconvex & convex\\
EB formulated via & slope $|\nabla f|$ & subdifferential $\partial f$\\
Partial smoothness & $C^1$ at $\bar x$ for $0$ relative to $\mathcal{M}$ & $C^1$ around $\bar x$ relative to $\mathcal{M}$\\
Nondegeneracy & $0 \in \operatorname{ri} \hat\partial f(\bar x)$ & $0 \in \operatorname{ri} \partial f(\bar x)$\\
\hline
\end{tabular}
\end{center}

We now illustrate our main result (Theorem~\ref{thm:eb-equiv-nonconvex}) with an example. We assume that $f$ has a composite structure as follows:
$$f = g + h,$$ 
where $\nabla g$ is Lipschitz continuous with constant $L$ and $h$ is proper, closed and convex throughout this section.
For such a composite function, $f$ is subdifferentially regular, hence the slope EB (Definition~\ref{def:slope-eb}) and the subdifferential EB (Definition~\ref{def:subdiff-eb}) coincide by Proposition~\ref{prop:slope}. We refer to both as \textit{subdifferential EB} in this section.
The following definition is an alternative form of EB (proximal EB) which is commonly used in the optimization-algorithms literature. We first show that subdifferential EB (Definition~\ref{def:subdiff-eb}) is equivalent to proximal EB under the composite structure.

\begin{definition}
    \label{def:proximal-eb}
Suppose that $f = g + h$, where $\nabla g$ is Lipschitz continuous with constant $L$ and $h$ is proper, closed and convex, and suppose that the solution set $S := \arg\min f$ is nonempty. Let $t>0$ be fixed with $tL\leq 1$.
    We say that $f$ satisfies the error bound (EB) around $\bar x \in S$ on $(\mathbb{R}^n,d)$ if there exist constants $\hat \epsilon,\hat\nu,\hat \mu > 0$ such that for all $x \in B_{\hat \epsilon}(\bar x) \cap [f\le f(\bar x)+\hat\nu]$,
        \begin{equation*}
           \hat{\mu} \operatorname{dist}(x, S) \le \| x - \operatorname{prox}_{th}(x - t \nabla g(x)) \|.
        \end{equation*}
\end{definition}

\begin{lemma}[Subdifferential EB and proximal EB]
    \label{lem:eb-equiv-proximal}
Subdifferential EB in the sense of Definition~\ref{def:subdiff-eb} is equivalent to proximal EB (Definition~\ref{def:proximal-eb}). 
\end{lemma}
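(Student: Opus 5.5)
The plan is to show that the residual $R_t(x):=x-\operatorname{prox}_{th}(x-t\nabla g(x))$ and $\operatorname{dist}(0,\partial f(x))$ are comparable: each is bounded by a constant multiple of the other, possibly after shrinking to a neighborhood. Since $f=g+h$ with $g\in C^1$ and $h$ convex, we have $\hat\partial f(x)=\partial f(x)=\nabla g(x)+\partial h(x)$. So Definition~\ref{def:subdiff-eb} reads $\mu\operatorname{dist}(x,S)\le\operatorname{dist}(0,\nabla g(x)+\partial h(x))$. Once the two-sided comparison is in hand, the EB constants transfer directly and $\epsilon,\nu$ can be kept unchanged.

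\emph{Step 1: $\|R_t(x)\|\le t\operatorname{dist}(0,\partial f(x))$.} Take $x\in\operatorname{dom}\partial h$ and any $\xi\in\partial h(x)$; if $\partial f(x)=\varnothing$ the inequality is trivial. Note that $x=\operatorname{prox}_{th}(x+t\xi)$. Nonexpansiveness of $\operatorname{prox}_{th}$ then gives
\[
\|R_t(x)\|=\|\operatorname{prox}_{th}(x+t\xi)-\operatorname{prox}_{th}(x-t\nabla g(x))\|\le t\|\nabla g(x)+\xi\|.
\]
Taking the infimum over $\xi$ yields the claim. Consequently subdifferential EB implies proximal EB: starting from $\mu\operatorname{dist}(x,S)\le\operatorname{dist}(0,\partial f(x))$, this bound transfers to $\|R_t(x)\|$ with constant $\hat\mu=\mu/t$, taken in the direction that makes the inequality hold. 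Care is needed with the direction of the inequality, which is the subtle point.

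In fact Step 1 goes the wrong way for that implication. What it actually gives is that proximal EB implies subdifferential EB:
\[
\hat\mu\operatorname{dist}(x,S)\le\|R_t(x)\|\le t\operatorname{dist}(0,\partial f(x)),
\]
so subdifferential EB holds with $\mu=\hat\mu/t$. This direction is immediate.

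\emph{Step 2: subdifferential EB implies proximal EB.} Here the natural tool is the point $x^+=\operatorname{prox}_{th}(x-t\nabla g(x))$. The prox optimality condition gives $\frac{1}{t}(x-x^+)-\nabla g(x)\in\partial h(x^+)$, hence
\[
\zeta:=\tfrac1t R_t(x)+\nabla g(x^+)-\nabla g(x)\in\partial f(x^+),
\qquad
\|\zeta\|\le\bigl(\tfrac1t+L\bigr)\|R_t(x)\|\le\tfrac2t\|R_t(x)\|,
\]
where the last step uses $tL\le1$. If $x^+$ lies in $B_\epsilon(\bar x)\cap[f\le f(\bar x)+\nu]$, subdifferential EB at $x^+$ gives $\mu\operatorname{dist}(x^+,S)\le\frac2t\|R_t(x)\|$. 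Then
\[
\operatorname{dist}(x,S)\le\|R_t(x)\|+\operatorname{dist}(x^+,S)\le\bigl(1+\tfrac{2}{t\mu}\bigr)\|R_t(x)\|,
\]
which is proximal EB.

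\emph{Main obstacle: keeping $x^+$ in the EB region.} The remaining difficulty is to ensure $x^+$ lies in the region where subdifferential EB holds. For the ball, I would choose $\hat\epsilon$ small. Then $x^+$ is close to $\bar x$, because $x\mapsto x^+$ is continuous (indeed Lipschitz) and $\bar x^+=\bar x$ since $\bar x$ is a fixed point of the prox-gradient map. For the level set, I would use the sufficient-decrease property of the proximal gradient step with $t\le1/L$, namely $f(x^+)\le f(x)-\frac{1}{2t}\|R_t(x)\|^2\le f(x)$. Thus $x\in[f\le f(\bar x)+\hat\nu]$ with $\hat\nu\le\nu$ implies $x^+\in[f\le f(\bar x)+\nu]$. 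If that descent inequality needs justification, it follows from the descent lemma for $g$ combined with the prox characterization comparing $h(x^+)$ with $h(x)$.
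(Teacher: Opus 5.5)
Your proof is correct and follows the paper's argument essentially step for step. Proximal EB $\Rightarrow$ subdifferential EB uses nonexpansiveness of $\operatorname{prox}_{th}$ together with the fixed-point identity $x=\operatorname{prox}_{th}(x+t\xi)$. The converse uses the point $x^+$, the subgradient $\frac1t R_t(x)+\nabla g(x^+)-\nabla g(x)\in\partial f(x^+)$, the bound $\|x^+-\bar x\|\le(1+tL)\|x-\bar x\|$, and the descent $f(x^+)\le f(x)$.

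In a write-up, delete the sentences in Step~1 that first assert and then retract the wrong direction. Also delete the opening claim that $\|R_t(x)\|$ and $\operatorname{dist}(0,\partial f(x))$ are pointwise comparable with $\epsilon,\nu$ unchanged. That claim is false in general: take $g=0$, $h=|\cdot|$ and $0<|x|\le t$, where $R_t(x)=x$ but $\operatorname{dist}(0,\partial f(x))=1$. It is also not what your Step~2 actually uses.
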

\begin{proof}
Assume subdifferential EB (Definition~\ref{def:subdiff-eb}) holds with constants $\epsilon,\nu,\mu>0$. Let $\hat x \triangleq \operatorname{prox}_{th}(x - t\nabla g(x))$. Then $\| \hat x - \bar x \| = \| \operatorname{prox}_{th}(x - t\nabla g(x)) - \operatorname{prox}_{th}(\bar x - t\nabla g(\bar x))  \| \le \| x - \bar x - t\nabla g(x) + t\nabla g(\bar x) \| \le (tL+1) \| x - \bar x \|$. Moreover, since $tL\leq 1$, the standard proximal-gradient descent estimate gives $f(\hat x)\leq f(x)$. Then if $x$ is within an $\epsilon/(tL+1)$-ball centered at $\bar x$ and satisfies $f(x)\leq f(\bar x)+\nu$,
    \begin{align*}
   {\rm dist}(x,S) & \le \| x - \hat x \| + {\rm dist}(\hat x, S) \le \| x - \hat x \| + (1/\mu){\rm dist}(0,\partial f(\hat x)) \\
    & \le \| x - \hat x \| + (1/(t\mu)) \| x - \hat x - t\nabla g(x) + t\nabla g(\hat x) \| \le (1+(1+tL)/(t\mu) ) \| x - \hat x \|,
    \end{align*}
    where the second inequality follows by applying the assumed subdifferential EB to $\hat x$, and the third follows because $t^{-1}(x - \hat x - t\nabla g(x) + t\nabla g(\hat x)) \in \partial h(\hat x) + \nabla g(\hat x)=\partial f(\hat x)$.
    The converse result also holds. Assume proximal EB (Definition~\ref{def:proximal-eb}) holds with constants $\hat\epsilon,\hat\nu,\hat\mu>0$. Noting that $\operatorname{prox}_{th}(x + tv - t\nabla g(x)) = x$ for any $v \in \nabla g(x) + \partial h(x)$ and using the nonexpansiveness of $\operatorname{prox}_{th}$, we have
    \begin{align*}
    \| x - \hat x \| = \| \operatorname{prox}_{th}(x + tv - t\nabla g(x)) - \operatorname{prox}_{th}(x - t\nabla g(x)) \| \le t\| v \|.
    \end{align*}
  Since $v$ is arbitrary,  we have $\| x - \hat x \| \le t{\rm dist}(0, \partial f(x))$. Hence, for all $x \in B_{\hat\epsilon}(\bar x) \cap [f\le f(\bar x)+\hat\nu]$,
  \begin{equation*}
      \hat\mu\operatorname{dist}(x,S)\leq \|x-\hat x\|\leq t\operatorname{dist}(0,\partial f(x)),
  \end{equation*}
  which is subdifferential EB after rescaling the constant.
\end{proof}

\begin{remark}
    A similar result for convex $g$ was shown in \cite[Theorems 3.4 and 3.5]{drusvyatskiy2018error}.
\end{remark}

In the introduction, we recalled that the equivalence between EB on $(\mathbb{R}^n,d)$ and EB on $(\mathcal{M},d)$ for $\ell_1$-regularization was shown in \cite{wu2025resolution}. Next, we show that our main result (Theorem~\ref{thm:eb-equiv-nonconvex}) can be applied to $\ell_1$ regularization and recover the result in \cite{wu2025resolution}.
We first verify that the composite function is $C^1$-partly smooth around $\bar x$ relative to the active manifold associated with the zero coordinates of $\bar x$.
\begin{lemma}
    Suppose that $f = g + h$ where $g$ is $C^{1,1}$ with constant $L$ and $h(x) = \lambda \| x \|_1$ for some $\lambda > 0$. Let $\bar x$ be a minimizer of $f$ and $I = \{ i : \bar x_i = 0 \}$. Then, $f$ is $C^1$-partly smooth around $\bar x$ relative to $\mathcal{M} = \{ x : x_i = 0, \forall i \in I \}$.
\end{lemma}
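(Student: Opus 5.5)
The plan is to check directly the four conditions of Definition~\ref{def:partial-smoothness} at every point $y\in\mathcal{M}$ near $\bar x$, for every subgradient $\bar y\in\partial f(y)$. The key preliminary step is to fix the neighborhood. Set
\[
\epsilon:=\min_{i\notin I}|\bar x_i|>0
\]
(if $I^c=\varnothing$, take any $\epsilon>0$). Then every $y\in B_\epsilon(\bar x)$ has $y_i\neq 0$ and $\operatorname{sign}(y_i)=\operatorname{sign}(\bar x_i)$ for all $i\notin I$. For $y\in\mathcal{M}\cap B_\epsilon(\bar x)$ we also have $y_i=0$ for $i\in I$. Hence the zero pattern of $y$ is exactly $I$, and the sign pattern $\sigma_i:=\operatorname{sign}(\bar x_i)$, $i\notin I$, is constant on this neighborhood.

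Next, I will record the subdifferential. The set $\mathcal{M}$ is a linear subspace, hence a $C^\infty$ manifold with
\[
T_y\mathcal{M}=\operatorname{span}\{e_i:i\notin I\},\qquad N_y\mathcal{M}=\operatorname{span}\{e_i:i\in I\}.
\]
Since $g$ is $C^1$ and $h$ is convex, $f$ is subdifferentially regular. Therefore
\[
\hat\partial f(y)=\partial f(y)=\nabla g(y)+\lambda\,\partial\|\cdot\|_1(y)=\nabla g(y)+C,
\]
where $C=\{s:\ s_i=\lambda\sigma_i\ (i\notin I),\ s_i\in[-\lambda,\lambda]\ (i\in I)\}$ is a fixed set for all $y\in\mathcal{M}\cap B_\epsilon(\bar x)$.

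With this in hand, the four conditions follow in order.

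\emph{Prox-regularity.} By the descent lemma for $g$ and convexity of $h$, for every $x,x'$ and every $v=\nabla g(x)+s$ with $s\in\partial h(x)$,
\[
f(x')\ge f(x)+\langle v,x'-x\rangle-\tfrac{L}{2}\|x'-x\|^2 .
\]
This is prox-regularity with $\rho=L$, globally and for every subgradient.

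\emph{Restricted smoothness.} On $\mathcal{M}\cap B_\epsilon(\bar x)$ we have $h(y)=\lambda\sum_{i\notin I}\sigma_i y_i$, which is linear. Hence $f|_{\mathcal{M}}=g|_{\mathcal{M}}+(\text{linear})$ is $C^1$, and $g$ itself serves as a $C^1$ extension of the first term.

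\emph{Sharpness.} We have $\operatorname{par}\hat\partial f(y)=\operatorname{par}C=\operatorname{span}\{e_i:i\in I\}=N_y\mathcal{M}$, because $C$ is a translate of a box that is full-dimensional in the $I$-coordinates, with $\lambda>0$.

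\emph{Inner semicontinuity.} Fix $y\in\mathcal{M}\cap B_\epsilon(\bar x)$, a subgradient $z=\nabla g(y)+s\in\partial f(y)$ with $s\in C$, and a sequence $y_r\to y$ in $\mathcal{M}$. Eventually $y_r\in B_\epsilon(\bar x)$, so $z_r:=\nabla g(y_r)+s\in\partial f(y_r)$. By continuity of $\nabla g$, $z_r\to z$.

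These four items show that $f$ is $C^1$-partly smooth at every point of $\mathcal{M}\cap B_\epsilon(\bar x)$ relative to $\mathcal{M}$, which is the claim.

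The main point requiring care is the choice of $\epsilon$, so that the sign pattern, and hence the set $C$, is constant along $\mathcal{M}$ near $\bar x$. Both restricted smoothness and inner semicontinuity depend on this constancy. Everything else is a routine computation.
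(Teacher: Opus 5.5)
Your proposal is correct and follows essentially the same route as the paper. Both arguments freeze the sign pattern of $\bar x$ off $I$ on a neighborhood, use the sum rule $\hat\partial f=\partial f=\nabla g+\partial h$, and then check the four conditions: prox-regularity from the descent lemma plus convexity with $\rho=L$, restricted smoothness since $h|_{\mathcal{M}}$ is locally linear, sharpness since $\operatorname{par}\partial h(y)=\operatorname{span}\{e_i:i\in I\}=N_y\mathcal{M}$, and inner semicontinuity since $\partial h$ is constant along $\mathcal{M}$. Your explicit choice $\epsilon=\min_{i\notin I}|\bar x_i|$ and your use of the continuity of $\nabla g$ in the inner-semicontinuity step make precise details that the paper leaves implicit.
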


\begin{proof}
    The set $\mathcal{M}$ is a linear subspace and hence a smooth manifold. For every $x\in\mathcal{M}$ sufficiently close to $\bar x$, the signs of the coordinates $x_i$ with $i\notin I$ agree with those of $\bar x_i$. Consequently, $h|_{\mathcal{M}}$ is locally affine. Since $g$ is $C^1$, the restriction $f|_{\mathcal{M}}=g|_{\mathcal{M}}+h|_{\mathcal{M}}$ is $C^1$ near $\bar x$.

    We next verify prox-regularity. Since $\nabla g$ is Lipschitz continuous with constant $L$,
    \begin{equation*}
        g(x) \geq g(y) + \langle \nabla g(y), x-y \rangle
        - \frac{L}{2}\|x-y\|^2, \qquad \forall x,y\in\mathbb{R}^n.
    \end{equation*}
    Moreover, the convexity of $h$ gives
    \begin{equation*}
        h(x) \geq h(y) + \langle v,x-y\rangle,
        \qquad \forall x,y\in\mathbb{R}^n,\quad v\in\partial h(y).
    \end{equation*}
    Since $g$ is $C^1$ and $h$ is convex, the smooth sum rule gives
    \begin{equation*}
        \hat\partial f(y)=\partial f(y)=\nabla g(y)+\partial h(y),
    \end{equation*}
    and $f$ is subdifferentially regular. Therefore, for every $s\in\partial f(y)$, writing $s=\nabla g(y)+v$ with $v\in\partial h(y)$ gives
    \begin{equation*}
        f(x) \geq f(y) + \langle s,x-y\rangle
        - \frac{L}{2}\|x-y\|^2.
    \end{equation*}
    Thus $f$ is prox-regular everywhere.

    For $x\in\mathcal{M}$ near $\bar x$, the subdifferential $\partial h(x)$ is constant because the coordinates indexed by $I$ remain zero and the signs of all other coordinates remain fixed. Hence, $\partial f$ is inner semicontinuity along $\mathcal{M}$.

    Finally, for all such $x$,
    \begin{equation*}
        \operatorname{par}\hat\partial f(x) =\operatorname{par}\partial h(x) =\{v\in\mathbb{R}^n:v_i=0\ \text{for all }i\notin I\}=N_x\mathcal{M}.
    \end{equation*}
    Thus the sharpness condition holds. All the conditions in Definition~\ref{def:partial-smoothness} hold at every point of $\mathcal{M}$ sufficiently close to $\bar x$, so $f$ is $C^1$-partly smooth around $\bar x$ relative to $\mathcal{M}$.
\end{proof}
Theorem~\ref{thm:eb-equiv-nonconvex} and Lemma~\ref{lem:eb-equiv-proximal} imply that if $\bar x$ is a nondegenerate minimizer of $f$, i.e., $0 \in \operatorname{ri} \partial f(\bar x)$, then proximal EB (Definition~\ref{def:proximal-eb}) is equivalent to EB on $\mathcal{M}$ (Definition~\ref{def:subdiff-eb-m}).
This recovers the result in \cite[Theorem 4.3]{wu2025resolution}. 
We summarize this result in the following theorem.

\begin{theorem}
    Suppose that $f = g + h$ where $g$ is $C^1$ smooth, $\nabla g$ is Lipschitz continuous with constant $L$ and $h(x) = \lambda \| x \|_1$ for some $\lambda > 0$. Let $\bar x$ be a minimizer of $f$ and $I = \{ i : \bar x_i = 0 \}$. Then, if $0 \in \operatorname{ri} \partial f(\bar x)$, proximal EB (Definition~\ref{def:proximal-eb}) is equivalent to EB on $\mathcal{M}$ (Definition~\ref{def:subdiff-eb-m}), where $\mathcal{M} = \{ x : x_i = 0, \forall i \in I \}$.
\end{theorem}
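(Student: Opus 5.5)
The plan is to chain three earlier results: the preceding lemma, which gives $C^1$ partial smoothness of $f=g+\lambda\|\cdot\|_1$ around $\bar x$ relative to $\mathcal{M}$; Theorem~\ref{thm:eb-equiv-nonconvex}, which transfers slope EB between $(\mathbb{R}^n,d)$ and $(\mathcal{M},d)$; and Lemma~\ref{lem:eb-equiv-proximal}, which identifies proximal EB with subdifferential EB. The work consists of checking the hypotheses of Theorem~\ref{thm:eb-equiv-nonconvex} and matching the slope-based definitions with the subdifferential-based ones.

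\textbf{Step 1: hypotheses of Theorem~\ref{thm:eb-equiv-nonconvex}.} The function $f$ is proper and closed, since $g$ is $C^1$ and $h$ is finite and convex. By assumption $\bar x\in\arg\min f$. The preceding lemma gives $C^1$ partial smoothness around $\bar x$, hence at $\bar x$ for the subgradient $0$.

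For nondegeneracy, $g$ is $C^1$ and $h$ is convex, so $\hat\partial f(\bar x)=\partial f(\bar x)=\nabla g(\bar x)+\partial h(\bar x)$. The assumption $0\in\operatorname{ri}\partial f(\bar x)$ therefore equals the required condition $0\in\operatorname{ri}\hat\partial f(\bar x)$. Theorem~\ref{thm:eb-equiv-nonconvex} then yields: slope EB on $(\mathbb{R}^n,d)$ (Definition~\ref{def:slope-eb}) holds if and only if slope EB on $(\mathcal{M},d)$ (Definition~\ref{def:slope-eb-m}) holds.

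\textbf{Step 2: translating the definitions.} On the ambient side, $f$ is subdifferentially regular, so Proposition~\ref{prop:slope} gives $|\nabla f|(x)=\operatorname{dist}(0,\partial f(x))=\operatorname{dist}(0,\hat\partial f(x))$ for every $x$. Hence Definition~\ref{def:slope-eb} coincides with Definition~\ref{def:subdiff-eb}.

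On the manifold side, $\mathcal{M}$ is a linear subspace and $f|_{\mathcal{M}}$ is $C^1$ near $\bar x$. Lemma~\ref{lem:slope-grad} gives $|\nabla(f|_{\mathcal{M}})|(x)=\|\nabla_{\mathcal{M}}f(x)\|$ for $x\in\mathcal{M}$ near $\bar x$. Hence Definition~\ref{def:slope-eb-m} coincides with Definition~\ref{def:subdiff-eb-m}, possibly after shrinking $\epsilon$ so that $B_\epsilon(\bar x)\cap\mathcal{M}$ lies in the region where $f|_{\mathcal{M}}$ is $C^1$.

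\textbf{Step 3: closing the chain.} Lemma~\ref{lem:eb-equiv-proximal} states that subdifferential EB is equivalent to proximal EB for a fixed $t$ with $tL\le 1$. Combining the three steps gives
\[
\text{proximal EB} \iff \text{Def.~\ref{def:subdiff-eb}} \iff \text{Def.~\ref{def:slope-eb}} \iff \text{Def.~\ref{def:slope-eb-m}} \iff \text{Def.~\ref{def:subdiff-eb-m}}.
\]

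\textbf{Main obstacle.} The only delicate point is the bookkeeping of neighborhoods and constants. The constants $\epsilon,\nu,\mu$ are shrunk or rescaled at each link: Lemma~\ref{lem:eb-equiv-proximal} replaces $\epsilon$ by $\epsilon/(1+tL)$, and Step 2 may shrink $\epsilon$ on the manifold side. Since every definition is local with existentially quantified constants, each such adjustment is harmless, and I will simply note this rather than track explicit values.
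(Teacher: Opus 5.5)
Your proposal is correct and takes the paper's route. The paper also combines the preceding $C^1$ partial-smoothness lemma with Theorem~\ref{thm:eb-equiv-nonconvex} and Lemma~\ref{lem:eb-equiv-proximal}, using Proposition~\ref{prop:slope} and Lemma~\ref{lem:slope-grad} (from the opening of Section~\ref{sec:discussion-application}) to identify the slope-based and subdifferential-based definitions; your explicit checks that $\hat\partial f(\bar x)=\partial f(\bar x)$ and that the neighborhoods and constants can be shrunk are bookkeeping the paper leaves implicit.
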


\section{Conclusion}
\label{sec:conclusion}

We studied the transfer of local error bounds between an ambient Euclidean space and an identifiable manifold. From the metric perspective, we formulated error bounds in terms of the slope and established their equivalence under identifiability. We then extended this equivalence to $C^1$ partial smoothness under a nondegeneracy condition. The key ingredient was a uniform linear-growth estimate in the normal directions, which shows that $C^2$ regularity of the manifold and the restricted function is not essential for the error-bound equivalence.

From the geometric perspective, we considered proper, closed, and convex functions and used $\mathcal{VU}$-theory to relate the ambient subdifferential to the intrinsic geometry of the active manifold. In particular, the $\mathcal{U}$-gradient coincides locally with the Riemannian gradient, allowing the ambient subdifferential error bound to be characterized entirely on the manifold. The application to $\ell_1$-regularized optimization recovers the previously established equivalence between proximal and manifold error bounds.

These results clarify the geometric mechanism underlying the transfer of error bounds: sharp growth away from the active manifold controls off-manifold behavior, while the smooth restriction governs tangential behavior. A natural direction for future work is to extend the geometric analysis beyond convexity through generalized $\mathcal{VU}$-decompositions and to investigate the resulting implications for local convergence rates of active-set and manifold-based algorithms.

\newpage

\bibliographystyle{splncs04}
\bibliography{references}

@article{miller2005newton,
  title={Newton methods for nonsmooth convex minimization: connections among U-Lagrangian, Riemannian Newton and SQP methods},
  author={Miller, Scott A and Malick, J{\'e}r{\^o}me},
  journal={Mathematical programming},
  volume={104},
  number={2},
  pages={609--633},
  year={2005},
  publisher={Springer}
}

@article{drusvyatskiy2021nonsmooth,
  title={Nonsmooth optimization using Taylor-like models: error bounds, convergence, and termination criteria},
  author={Drusvyatskiy, Dmitriy and Ioffe, Alexander D and Lewis, Adrian S},
  journal={Mathematical Programming},
  volume={185},
  number={1},
  pages={357--383},
  year={2021},
  publisher={Springer}
}

@article{mifflin2002proximal,
  title={Proximal points are on the fast track},
  author={Mifflin, Robert and Sagastiz{\'a}bal, Claudia},
  journal={Journal of Convex Analysis},
  volume={9},
  number={2},
  pages={563--580},
  year={2002},
  publisher={HELDERMANN VERLAG LANGER GRABEN 13D, 32657 LEMGO, GERMANY}
}

@article{rebjock2025fast,
  title={Fast convergence to non-isolated minima: four equivalent conditions for C 2 functions: Q. Rebjock, N. Boumal},
  author={Rebjock, Quentin and Boumal, Nicolas},
  journal={Mathematical Programming},
  volume={213},
  number={1},
  pages={151--199},
  year={2025},
  publisher={Springer}
}

@article{liu2020u,
  title={The {U}-Lagrangian, Fast Track, and Partial Smoothness of a Prox-regular Function},
  author={Liu, Shuai and Eberhard, Andrew and Luo, Yousong},
  journal={Set-Valued and Variational Analysis},
  volume={28},
  number={2},
  pages={369--394},
  year={2020},
  publisher={Springer},
  doi={10.1007/s11228-019-00518-z}
}

@article{wu2025resolution,
  title={On resolution of L1-norm minimization via a two-metric adaptive projection method},
  author={Wu, Hanju and Xie, Yue},
  journal={arXiv preprint arXiv:2504.12260},
  year={2025}
}

@article{deng2025efficient,
  title={An efficient primal dual semismooth Newton method for semidefinite programming},
  author={Deng, Zhanwang and Hu, Jiang and Deng, Kangkang and Wen, Zaiwen},
  journal={arXiv preprint arXiv:2504.14333},
  year={2025}
}

@article{hu2025analysis,
  title={On the analysis of semismooth Newton-type methods for composite optimization},
  author={Hu, Jiang and Tian, Tonghua and Pan, Shaohua and Wen, Zaiwen},
  journal={Journal of Scientific Computing},
  volume={103},
  number={2},
  pages={59},
  year={2025},
  publisher={Springer}
}

@article{lewis2013partial,
  title={Partial smoothness, tilt stability, and generalized Hessians},
  author={Lewis, Adrian S and Zhang, Shanshan},
  journal={SIAM Journal on Optimization},
  volume={23},
  number={1},
  pages={74--94},
  year={2013},
  publisher={SIAM}
}

@article{lemarechal2000u,
  title={The {U}-Lagrangian of a convex function},
  author={Lemar{\'e}chal, Claude and Oustry, Fran{\c{c}}ois and Sagastiz{\'a}bal, Claudia},
  journal={Transactions of the American mathematical Society},
  volume={352},
  number={2},
  pages={711--729},
  year={2000}
}

@article{lewis2002active,
  title={Active sets, nonsmoothness, and sensitivity},
  author={Lewis, Adrian S},
  journal={SIAM Journal on Optimization},
  volume={13},
  number={3},
  pages={702--725},
  year={2002},
  publisher={SIAM}
}

@misc{drusvyatskiy2012optimalityidentifiabilitysensitivity,
      title={Optimality, identifiability, and sensitivity}, 
      author={Dmitriy Drusvyatskiy and Adrian S. Lewis},
      year={2012},
      eprint={1207.6628},
      archivePrefix={arXiv},
      primaryClass={math.OC},
      url={https://arxiv.org/abs/1207.6628}, 
}

@article{drusvyatskiy2013slope,
  title={Slope and geometry in variational mathematics},
  author={Drusvyatskiy, Dmitriy},
  year={2013}
}

@article{ioffe2017variational,
  title={Variational analysis of regular mappings},
  author={Ioffe, Alexander D},
  journal={Springer Monographs in Mathematics. Springer, Cham},
  year={2017},
  publisher={Springer}
}

@inproceedings{liao2024error,
  title={Error bounds, PL condition, and quadratic growth for weakly convex functions, and linear convergences of proximal point methods},
  author={Liao, Feng-Yi and Ding, Lijun and Zheng, Yang},
  booktitle={6th Annual Learning for Dynamics \& Control Conference},
  pages={993--1005},
  year={2024},
  organization={PMLR}
}

@article{wright1993identifiable,
  title={Identifiable surfaces in constrained optimization},
  author={Wright, Stephen J},
  journal={SIAM Journal on Control and Optimization},
  volume={31},
  number={4},
  pages={1063--1079},
  year={1993},
  publisher={SIAM}
}

@book{rockafellar2009variational,
  title={Variational analysis},
  author={Rockafellar, R Tyrrell and Wets, Roger J-B},
  volume={317},
  year={2009},
  publisher={Springer Science \& Business Media}
}

@article{yue2019family,
  title={A family of inexact SQA methods for non-smooth convex minimization with provable convergence guarantees based on the Luo--Tseng error bound property},
  author={Yue, Man-Chung and Zhou, Zirui and So, Anthony Man-Cho},
  journal={Mathematical Programming},
  volume={174},
  number={1},
  pages={327--358},
  year={2019},
  publisher={Springer}
}

@article{zhou2017unified,
  title={A unified approach to error bounds for structured convex optimization problems},
  author={Zhou, Zirui and So, Anthony Man-Cho},
  journal={Mathematical Programming},
  volume={165},
  pages={689--728},
  year={2017},
  publisher={Springer}
}

@article{luo1993error,
  title={Error bounds and convergence analysis of feasible descent methods: a general approach},
  author={Luo, Zhi-Quan and Tseng, Paul},
  journal={Annals of Operations Research},
  volume={46},
  number={1},
  pages={157--178},
  year={1993},
  publisher={Springer}
}

@article{hare2004identifying,
  title={Identifying active constraints via partial smoothness and prox-regularity},
  author={Hare, Warren L and Lewis, Adrian S},
  journal={Journal of Convex Analysis},
  volume={11},
  number={2},
  pages={251--266},
  year={2004},
  publisher={HELDERMANN VERLAG LANGER GRABEN 17, 32657 LEMGO, GERMANY}
}

@article{tseng2009coordinate,
  title={A coordinate gradient descent method for nonsmooth separable minimization},
  author={Tseng, Paul and Yun, Sangwoon},
  journal={Mathematical Programming},
  volume={117},
  pages={387--423},
  year={2009},
  publisher={Springer}
}

@article{lewis2024identifiability,
  title={Identifiability, the KL property in metric spaces, and subgradient curves},
  author={Lewis, AS and Tian, Tonghua},
  journal={Foundations of Computational Mathematics},
  pages={1--38},
  year={2024},
  publisher={Springer}
}

@article{davis2024stochastic,
  title={Stochastic algorithms with geometric step decay converge linearly on sharp functions},
  author={Davis, Damek and Drusvyatskiy, Dmitriy and Charisopoulos, Vasileios},
  journal={Mathematical Programming},
  volume={207},
  number={1},
  pages={145--190},
  year={2024},
  publisher={Springer}
}

@article{drusvyatskiy2018error,
  title={Error bounds, quadratic growth, and linear convergence of proximal methods},
  author={Drusvyatskiy, Dmitriy and Lewis, Adrian S},
  journal={Mathematics of operations research},
  volume={43},
  number={3},
  pages={919--948},
  year={2018},
  publisher={INFORMS}
}

@article{hare2007identifying,
  title={Identifying active manifolds},
  author={Hare, Warren L and Lewis, Adrian S},
  journal={Algorithmic Operations Research},
  volume={2},
  number={2},
  pages={75--82},
  year={2007},
  publisher={The Electronic Text Centre at the University of New Brunswick Libraries}
}

@article{tseng2010approximation,
  title={Approximation accuracy, gradient methods, and error bound for structured convex optimization},
  author={Tseng, Paul},
  journal={Mathematical Programming},
  volume={125},
  number={2},
  pages={263--295},
  year={2010},
  publisher={Springer}
}

\newpage
\appendix
\renewcommand{\theHsection}{appendix.\Alph{section}}

\section{Counterexample}
\label{sec:counterexample}

Let \((a_n)_{n\geq 1}\) be any sequence of positive real numbers satisfying
\(a_n\to 0\), and define
\[
  p_n=\left(\frac1n,0\right),
  \qquad
  q_n=\left(\frac1n,a_n\right),
  \qquad
  S_n=\left\{\left(\frac1n,t\right):0\leq t\leq a_n\right\}.
\]
Set
\[
  X=\{(0,0)\}\cup \bigcup_{n=1}^{\infty} S_n,
\]
equipped with the Euclidean metric inherited from \(\mathbb{R}^2\). Since
\(a_n\to 0\), the only possible accumulation point of the segments \(S_n\) away
from finitely many fixed segments is \((0,0)\). Hence \(X\) is closed in
\(\mathbb{R}^2\). Since \(\mathbb{R}^2\) is complete and every closed subset of a
complete metric space is complete, \(X\) is complete. Define \(f:X\to\mathbb{R}\) by
\[
  f(0,0)=0,
  \qquad
  f\left(\frac1n,t\right)=a_n-t
  \quad (0\leq t\leq a_n).
\]
Since \(a_n\to0\), the function \(f\) is continuous at \((0,0)\); on each segment
it is linear. Thus \(f\) is continuous, hence closed, and \(f\geq 0\). Therefore
\[
  |\nabla f|(0,0)=0.
\]

Let
\[
  M=\{(0,0)\}\cup \{p_n,q_n:n\geq 1\}.
\]
We first check that \(M\) is identifiable at \(\bar x=(0,0)\) for \(f\). If
\[
  x=\left(\frac1n,t\right)\in X\setminus M,
  \qquad 0<t<a_n,
\]
then moving upward along the same segment decreases \(f\) at unit rate:
\[
  \lim_{s\downarrow 0}
  \frac{f(x)-f\left(\frac1n,t+s\right)}
       {d\left(x,\left(\frac1n,t+s\right)\right)}
  =1.
\]
Thus \(|\nabla f|(x)\geq 1\) for every \(x\in X\setminus M\). Hence
\[
  \liminf_{\substack{x\to_f \bar x\\ x\notin M}} |\nabla f|(x)\geq 1,
\]
so \(M\) is identifiable at \(\bar x\).

Now take the desingularizer
\[
  \phi(t)=t.
\]
The sublevel set \(\{x\in X:f(x)\leq f(\bar x)\}\) is
\[
  \{(0,0)\}\cup \{q_n:n\geq 1\}.
\]
At every point \(x\in X\) with \(0<f(x)<\delta\), either \(x=p_n\) for some
large \(n\), or \(x\) lies in the interior of one of the segments \(S_n\). In both
cases, moving upward along \(S_n\) gives
\[
  |\nabla f|(x)\geq 1.
\]
Consequently \(f\) has the KL property at \(\bar x\) with desingularizer
\(\phi(t)=t\). Thus property \((a)\) in \cite[Theorem 5.12]{lewis2024identifiability} holds.

However, on the metric subspace \(M\), each point \(p_n\) is isolated. Indeed,
\[
  d(p_n,q_n)=a_n>0,
\]
and the remaining points of \(M\) also have positive distance from \(p_n\). Hence
\(p_n\) is a local minimizer of \(f|_M\), so
\[
  |\nabla(f|_M)|(p_n)=0.
\]
But
\[
  p_n\to \bar x,
  \qquad
  f(p_n)=a_n\to 0,
  \qquad
  p_n\notin \{x\in M:f(x)\leq f(\bar x)\}.
\]
Therefore, the sublevel set of \(f|_M\) is not identifiable at \(\bar x\) for
\(\phi\circ f|_M=f|_M\). Equivalently, \(f|_M\) does not have the KL property at
\(\bar x\) with desingularizer \(\phi(t)=t\).

\section{Proofs from Section~\ref{sec:preliminary}}
\label{proofs}

\begin{proof}[Proof of Lemma~\ref{lem:local-graph}]
    Let $m=\dim \mathcal{M}$ and choose a $C^p$ local defining map
    $F:U_0\to\mathbb{R}^{n-m}$ around $\bar x$ such that
    $\mathcal{M}\cap U_0=F^{-1}(0)$ and $\nabla F(\bar x)$ has full rank.
    Set $T=T_{\bar x}\mathcal{M}=\ker\nabla F(\bar x)$ and $N=N_{\bar x}\mathcal{M}=T^\perp$.
    Consider
    \[
        G(u,w)=F(\bar x+u+w),\qquad u\in T,\quad w\in N.
    \]
    The derivative of $G$ with respect to $w$ at $(0,0)$ is the linear map
    \[
        \mathrm{D}_wG(0,0)w=\nabla F(\bar x)w,\qquad w\in N.
    \]
    This map from $N$ to $\mathbb{R}^{n-m}$ is an isomorphism: it is injective because if $w\in N$ and $\nabla F(\bar x)w=0$, then $w\in T\cap N=\{0\}$, and the dimensions of $N$ and $\mathbb{R}^{n-m}$ agree.
    By the implicit function theorem, there exist $\rho>0$, a neighborhood of $0$ in $N$ denoted by $O_N$, and a unique $C^p$ map $h:T\cap B_\rho(0)\to O_N$ such that $h(0)=0$ and
    \[
        F(\bar x+u+w)=0
        \quad\Longleftrightarrow\quad
        w=h(u)
    \]
    for $(u,w)$ sufficiently small.  Hence $\mathcal{M}$ is locally represented as the stated graph.
    Differentiating the identity $F(\bar x+u+h(u))=0$ at $u=0$ gives
    \[
        \nabla F(\bar x)\bigl(u+\nabla h(0)u\bigr)=0,
        \qquad \forall u\in T.
    \]
    Since $u\in\ker\nabla F(\bar x)$, we have $\nabla F(\bar x)\nabla h(0)u=0$.
    But $\nabla h(0)u\in N$ and $\nabla F(\bar x)|_N$ is injective, so $\nabla h(0)u=0$ for all $u\in T$.
    Therefore $\nabla h(0)=0$.  The final assertion follows by decomposing $y-\bar x$ according to $\mathbb{R}^n=T\oplus N$ and shrinking the neighborhood so that the tangential component lies in $B_\rho(0)$.
\end{proof}

\begin{proof}[Proof of Lemma~\ref{lem:slope-grad}]
Let
\[
    T:=T_x\mathcal{M},
    \qquad
    N:=N_x\mathcal{M},
    \qquad
    g:=\nabla_{\mathcal{M}} f(x)\in T .
\]
By Lemma~\ref{lem:local-graph}, after shrinking the neighborhood of $x$ if necessary, there is a $C^1$ map
\[
    h:T\cap B_\rho(0)\to N
\]
such that $h(0)=0$, $\nabla h(0)=0$, and
\[
    \Phi(u):=x+u+h(u), \qquad u\in T\cap B_\rho(0),
\]
parametrizes $\mathcal{M}$ near $x$.
Since $h(u)=o(\|u\|)$, we have
\begin{equation}
    \label{eq:local-graph-distance}
    \|\Phi(u)-x\|=\|u+h(u)\|=\|u\|+o(\|u\|).
\end{equation}
Define
\[
    F:T\cap B_\rho(0)\to\mathbb{R},
    \qquad
    F(u):=f(\Phi(u)).
\]
Since $f|_{\mathcal{M}}$ and $\Phi$ are $C^1$, the function $F$ is $C^1$ near $0$ in the vector space $T$.
Moreover,
\[
    \mathrm{D}\Phi(0)[u]=u+\nabla h(0)u=u,
    \qquad u\in T.
\]
By the chain rule and the definition of the Riemannian gradient,
\[
    \mathrm{D}F(0)[u]
    =
    \mathrm{D}(f|_{\mathcal{M}})(x)\big[\mathrm{D}\Phi(0)[u]\big]
    =
    \mathrm{D}(f|_{\mathcal{M}})(x)[u]
    =
    \langle g,u\rangle,
    \qquad u\in T.
\]
Therefore the first-order Taylor expansion of $F$ at $0$ is
\begin{equation}
    \label{eq:local-graph-taylor}
    F(u)=F(0)+\langle g,u\rangle+o(\|u\|).
\end{equation}

We first prove the upper bound.
If $x$ is a local minimizer of $f|_{\mathcal{M}}$, then $|\nabla(f|_{\mathcal{M}})|(x)=0\leq \|g\|$.
Otherwise, let $y_r\in\mathcal{M}$ satisfy $y_r\to x$ and $y_r\ne x$.
Writing $y_r=\Phi(u_r)$ with $u_r\to 0$ and using \eqref{eq:local-graph-distance}--\eqref{eq:local-graph-taylor}, we obtain
\[
    \frac{f(x)-f(y_r)}{\|x-y_r\|}
    =
    \frac{-\langle g,u_r\rangle+o(\|u_r\|)}
         {\|u_r\|+o(\|u_r\|)}
    \leq \frac{\|g\|+o(1)}{1+o(1)}
\]
Taking the limsup over all such sequences yields
\[
    |\nabla(f|_{\mathcal{M}})|(x)\leq \|g\|.
\]

For the reverse inequality, if $g=0$, the upper bound and nonnegativity of the slope already imply equality.
Assume $g\ne 0$ and set $v=-g/\|g\|\in T$.
For $t>0$ small, let $y_t=\Phi(tv)\in\mathcal{M}$.
Using \eqref{eq:local-graph-distance}--\eqref{eq:local-graph-taylor} again, we get
\[
    \frac{f(x)-f(y_t)}{\|x-y_t\|}
    =
    \frac{t\|g\|+o(t)}{t+o(t)}
    \to \|g\|
    \qquad (t\downarrow 0).
\]
Hence $|\nabla(f|_{\mathcal{M}})|(x)\geq \|g\|$.
The two inequalities prove the claim.
\end{proof}

\begin{proof}[Proof of Lemma~\ref{lem:u-lagrangian-property}]
    We first prove the projection property. For any $g,g'\in\partial f(x)$, the definition of $\mathcal{V}(x)$ gives $g-g'\in\mathcal{V}(x)$. Hence
    \[
        P_{\mathcal{U}(x)}(g-g')=0,
    \]
    showing that $P_{\mathcal{U}(x)}(g)$ is independent of the choice of $g\in\partial f(x)$. Let
    \[
        p=P_{\operatorname{aff}\partial f(x)}(0).
    \]
    Since $p,g\in\operatorname{aff}\partial f(x)$, we have $g-p\in\operatorname{par}\partial f(x)=\mathcal{V}(x)$. Moreover, the characterization of the Euclidean projection gives $p\in(\operatorname{par}\partial f(x))^\perp=\mathcal{U}(x)$. Therefore,
    \[
        P_{\mathcal{U}(x)}(g)=P_{\mathcal{U}(x)}(p)=p.
    \]

    The infimum of the jointly convex function
    \[
        (u,v)\mapsto f(x+u+v)-\langle g,v\rangle
    \]
    over $v\in\mathcal{V}(x)$ is convex in $u$. The subgradient inequality gives
    \begin{equation}
        \label{eq:u-lagrangian-lower-bound}
        f(x+u+v)-\langle g,v\rangle
        \geq f(x)+\langle g,u\rangle
        \qquad \text{for all }u\in\mathcal{U}(x),\ v\in\mathcal{V}(x),
    \end{equation}
    so $L_{\mathcal{U}}(u;g)>-\infty$.

    We next show that $L_{\mathcal{U}}(\cdot;g)$ is finite around $0$. Set
    \[
        D=P_{\mathcal{U}(x)}(\operatorname{dom}f-x)\subseteq\mathcal{U}(x).
    \]
    Suppose, to the contrary, that $0$ is not an interior point of $D$ relative to $\mathcal{U}(x)$. By convex separation, there exists a nonzero $a\in\mathcal{U}(x)$ such that
    \[
        \langle a,z-x\rangle\leq0
        \qquad\text{for all }z\in\operatorname{dom}f.
    \]
    Thus $a\in N_{\operatorname{dom}f}(x)$. Since $g\in\partial f(x)$, for every $z\in\operatorname{dom}f$ we have
    \[
        f(z)\geq f(x)+\langle g,z-x\rangle
        \geq f(x)+\langle g+a,z-x\rangle,
    \]
    where the second inequality follows from $\langle a,z-x\rangle\leq0$. The same inequality is automatic when $z\notin\operatorname{dom}f$, because then $f(z)=+\infty$. Hence $g+a\in\partial f(x)$, and therefore
    \[
        a=(g+a)-g\in\operatorname{span}(\partial f(x)-g)=\mathcal{V}(x).
    \]
    This contradicts $0\ne a\in\mathcal{U}(x)=\mathcal{V}(x)^{\perp}$. Therefore, $D$ contains a neighborhood of $0$ in $\mathcal{U}(x)$.

    For every $u$ in this neighborhood, the definition of $D$ gives a point $z\in\operatorname{dom}f$ such that $P_{\mathcal{U}(x)}(z-x)=u$. Setting
    \[
        v=P_{\mathcal{V}(x)}(z-x)
    \]
    yields $v\in\mathcal{V}(x)$ and $z=x+u+v\in\operatorname{dom}f$. Consequently,
    \[
        -\infty<L_{\mathcal{U}}(u;g)
        \leq f(x+u+v)-\langle g,v\rangle<+\infty,
    \]
    where the strict lower bound follows from \eqref{eq:u-lagrangian-lower-bound}. Thus $L_{\mathcal{U}}(\cdot;g)$ is a convex function that is finite on a neighborhood of $0$ in $\mathcal{U}(x)$, and is therefore continuous on that neighborhood.

    Let $g_{\mathcal{U}}=P_{\mathcal{U}(x)}(g)$. Taking the infimum in \eqref{eq:u-lagrangian-lower-bound} gives
    \[
        L_{\mathcal{U}}(u;g)\geq f(x)+\langle g_{\mathcal{U}},u\rangle.
    \]
    At $u=0$, this lower bound gives $L_{\mathcal{U}}(0;g)\geq f(x)$. On the other hand, choosing $v=0$ in the definition of $L_{\mathcal{U}}(0;g)$ gives the reverse inequality. Hence
    \[
        L_{\mathcal{U}}(0;g)=f(x).
    \]
    Since $v=0$ attains this value, we also have $0\in W(0;g)$. Moreover, the preceding lower bound can now be written as
    \[
        L_{\mathcal{U}}(u;g)
        \geq L_{\mathcal{U}}(0;g)+\langle g_{\mathcal{U}},u\rangle,
    \]
    which proves that $g_{\mathcal{U}}\in\partial L_{\mathcal{U}}(0;g)$. Conversely, let $s\in\partial L_{\mathcal{U}}(0;g)$. Then, for all $u\in\mathcal{U}(x)$ and $v\in\mathcal{V}(x)$,
    \[
        f(x+u+v)-\langle g,v\rangle
        \geq L_{\mathcal{U}}(u;g)
        \geq f(x)+\langle s,u\rangle.
    \]
    It follows that $s+P_{\mathcal{V}(x)}(g)\in\partial f(x)$. By the projection property proved above, all elements of $\partial f(x)$ have the same projection onto $\mathcal{U}(x)$, and hence $s=g_{\mathcal{U}}$. Thus
    \[
        \partial L_{\mathcal{U}}(0;g)=\{g_{\mathcal{U}}\}.
    \]
    Since $L_{\mathcal{U}}(\cdot;g)$ is convex and finite around $0$, and its subdifferential at $0$ is the singleton $\{g_{\mathcal{U}}\}$, the standard differentiability criterion for finite-dimensional convex functions implies that it is differentiable at $0$ with gradient $g_{\mathcal{U}}$.

    Finally, suppose that $g\in\operatorname{ri}\partial f(x)$. Then there exists $\eta>0$ such that
    \[
        g+\eta w\in\partial f(x)
        \qquad\text{for every }w\in\mathcal{V}(x)\text{ with }\|w\| = 1.
    \]
    For any nonzero $v\in\mathcal{V}(x)$, applying the subgradient inequality with $g+\eta v/\|v\|$ yields
    \[
        f(x+v)-\langle g,v\rangle\geq f(x)+\eta\|v\|>f(x).
    \]
    Therefore, $v=0$ is the unique minimizer in the definition of $W(0;g)$.
\end{proof}

\end{document}